\documentclass[11pt]{amsart}
\usepackage{amsmath}
\usepackage{amssymb}
\usepackage{amscd}

\def\NZQ{\mathbb}               
\def\NN{{\NZQ N}}
\def\QQ{{\NZQ Q}}
\def\ZZ{{\NZQ Z}}
\def\RR{{\NZQ R}}

%
%
%
%

\newtheorem{Theorem}{Theorem}[section]
\newtheorem{Lemma}[Theorem]{Lemma}

\newtheorem{Proposition}[Theorem]{Proposition}
\newtheorem{Remark}[Theorem]{Remark}

\newtheorem{Definition}[Theorem]{Definition}

\newtheorem{Question}[Theorem]{Question}
%
%
\let\epsilon\varepsilon
\let\phi=\varphi
\let\kappa=\varkappa

%
%
\textwidth=15cm \textheight=22cm \topmargin=0.5cm
\oddsidemargin=0.5cm \evensidemargin=0.5cm \pagestyle{plain}
\begin{document}
\title{Local Uniformization of Abhyankar valuations}
\author{Steven Dale Cutkosky}
\thanks{The  author was partially supported by NSF grant DMS-1700046.}

\address{Steven Dale Cutkosky, Department of Mathematics,
University of Missouri, Columbia, MO 65211, USA}
\email{cutkoskys@missouri.edu}

\begin{abstract} We prove local uniformization of Abhyankar valuations of an algebraic function field $K$ over a ground field $k$. Our result generalizes the proof of this result, with the additional assumption that the residue field of the valuation ring is separable over $k$,  by Hagen Knaf and Franz-Viktor Kuhlmann. The proof in this paper uses different methods, being inspired by the approach of Zariski and Abhyankar. 
\end{abstract}

\keywords{Local uniformization, Abhyankar valuation}
\subjclass[2010]{Primary 13A18, 13H05, 14E15}

\maketitle

\section{Introduction}\label{SecInt} 

In this paper we prove local uniformization of Abhyankar valuations $\nu$ of an algebraic function field $K$ over a ground field $k$. An Abhyankar valuation is a  valuation which satisfies equality in Abhyankar's inequality (\ref{eq27}). These valuations are particularly well behaved. Abhyankar \cite{Ab1} showed that  the value groups of these valuations are finitely generated, and that the residue fields  of their valuation rings are finitely 
generated field extensions of $k$.  In \cite[Theorem 1.1]{KK}, Knaf and Kuhlmann prove that with the additional assumption that the residue field of the valuation is separable over the ground field $k$, local uniformization holds for Abhyankar valuations of algebraic function fields.  A version of this theorem, valid for Abhyankar valuations in complete local rings over an algebraically closed field, is proven by Teissier in \cite[Theorem 5.5.1]{T}. In this paper, we prove that local uniformization holds for Abhyankar valuations in algebraic function fields, without any extra assumptions. Our local uniformization theorems are given in Theorems \ref{TheoremA}, \ref{TheoremB} and \ref{TheoremC}, stated later in this introduction, and proven in this paper. 

The proof of Knaf and Kuhlmann \cite{KK}, which has the assumption that the residue field of the Abhyankar valuation $\nu$ is separable over $k$,  shows that there is a regular local ring $R$ of $K$ which is dominated by the valuation $\nu$ such that $R$ is smooth over the ground field $k$. Without the assumption that the residue field of $\nu$ is separable over $k$, this may not be possible to achieve. However, we  prove in the general case of an Abhyankar valuation, that there exists a regular local ring $R$ of $K$ which is dominated by the valuation $\nu$.

Our proof is a generalization of the proof of Zariski for maximal rational rank valuations in a characteristic zero algebraic function field, \cite{Z1}. This method was used by Abhyankar to prove local uniformization of Abhyankar valuations in two dimensional algebraic function fields over an algebraically closed ground field in \cite[Section 1]{Ab2}. 
The proofs in \cite{Z1} and \cite{Ab2} both make use of the values of derivations of $K/k$  to achieve reduction of multiplicity. We only use the definition of a regular local ring: it has a regular system of parameters. 
Zariski used Perron transforms in \cite{Z1} to prove local uniformization for rank 1 valuations in characteristic zero algebraic function fields, and made a reduction argument to use this result to prove local uniformization of arbitrary rank valuations in characteristic zero algebraic function fields. 
Our  approach is influenced by that of Samar El Hitti in \cite{E1}, where local uniformization is proven in characteristic zero algebraic function fields for an arbitrary valuation, via a uniform use of higher rank Perron transforms.

 A delicate point in the construction of a proof of local uniformization of an Abhyankar valuation in the general case, when the residue field of the valuation is not
separable over the ground field $k$, is that it may not be possible to find a coefficient field of the completion of a given local ring dominated by the valuation which contains  $k$.

Before stating our local uniformization theorems, we give some necessary background about valuations.
We refer to \cite{ZS2}, \cite{End} and \cite{EP} for basic facts about valuations. 
Let $K$ be an algebraic function field over a field $k$, and $\nu$ be a valuation of $K/k$; that is, a valuation of $K$ which is trivial on $k$. Let $V_{\nu}$ be the valuation ring of $\nu$ with maximal ideal $m_{\nu}$ and $\Gamma_{\nu}$ be its valuation group. Let $t$ be the rank of $\nu$, and
$$
(0)=P_{t+1}^{\nu}\subset P_{t}^{\nu}\subset \cdots\subset P_2^{\nu}\subset P_1^{\nu}=m_{\nu}
$$
be the chain of prime ideals in $V_{\nu}$. Let
$$
0=\Gamma_0\subset\Gamma_1\subset \cdots\subset \Gamma_t=\Gamma_{\nu}
$$
be the chain of isolated subgroups. For $1\le i\le t$, let $\nu_i$ be the specialization of $\nu$ with valuation ring
$V_{\nu_i}=V_{P^{\nu}_{i}}$.  The maximal ideal $m_{\nu_i}$ of $V_{\nu_i}$ is $m_{\nu_i}=P_i^{\nu}V_{\nu_i}$; in particular,  $\nu_1=\nu$. The value group of $\nu_i$ is $\Gamma_{\nu_i}=\Gamma_{\nu}/\Gamma_{i-1}$. 

Abhyankar's inequality (\cite{Ab1} and \cite[Proposition 2, Appendix 2, page 331]{ZS2}) is  
\begin{equation}\label{eq27}
\mbox{rrank  }\nu +\mbox{trdeg}_kV_{\nu}/m_{\nu}\le \mbox{trdeg}_kK
\end{equation}
where $\mbox{rrank }\nu$ is the rational rank of $\nu$.  When equality holds in (\ref{eq27}), we have that 
$\Gamma_{\nu}\cong\ZZ^n$ as a group for some $n$ and $V_{\nu}/m_{\nu}$ is a finitely generated field over $k$.
This is proven in \cite{Ab1}, and \cite[Proposition 3, page 335, Appendix 2]{ZS2}. Valuations that satisfy equality in (\ref{eq27}) are called Abhyankar valuations. 

 The following three theorems, establishing local uniformization along an Abyhankar valuation in an algebraic function field,  are proven in Section \ref{SecLocUnif} of this paper, as a consequence of the theory developed in Section \ref{SecAbh}. Any notation used in the statements of our local uniformization theorems, which is not defined above,  can be found in Section \ref{SecPrem}.   If $V_{\nu}/m_{\nu}$ is separable over $k$, these theorems are a consequence of \cite[Theorem 1.1]{KK}
\begin{Theorem}\label{TheoremA} Suppose that $K$ is an algebraic function field over a field $k$ and $\nu$ is an Abhyankar valuation of $K/k$. Then there exists a regular algebraic local ring $R$ of $K$ such that $\nu$ dominates $R$.
Further,
\begin{enumerate}
\item[1)] $R$ has a  regular system of parameters $x_{1,1},\ldots,x_{t,r_t}$ such that 
$\nu_j(x_{j,1}),\ldots,\nu_j(x_{j,r_j})$ is a $\ZZ$-basis of $\Gamma_j/\Gamma_{j-1}$ and $P_j(R)=P_j^{\nu}\cap R$ is the regular prime ideal $(x_{j,1},\ldots,x_{t,r_t})$ for $1\le j\le t$.
\item[2)] We have that
$$
\left(R/P_i(R)\right)_{P_i(R)}\cong \left(V/P_i^{\nu}\right)_{P^{\nu}_i}\cong V_{\nu_i}/m_{\nu_i}
$$
for $1\le i\le t$.
\end{enumerate}
\end{Theorem}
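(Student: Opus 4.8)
The plan is to deduce the theorem from the theory of Perron transformations developed in Section~\ref{SecAbh}, once the combinatorial data attached to the Abhyankar valuation $\nu$ have been recorded. Since $\nu$ is Abhyankar, $\Gamma_\nu\cong\ZZ^n$ with $n=\mbox{rrank }\nu=\mbox{trdeg}_kK-\mbox{trdeg}_kV_\nu/m_\nu$, and each quotient $\Gamma_j/\Gamma_{j-1}$ of consecutive isolated subgroups is a finitely generated ordered abelian group, hence free; write $r_j$ for its rank, so that $r_1+\cdots+r_t=n$. Every specialization $\nu_j$ is again an Abhyankar valuation of $K/k$, now of rational rank $n-(r_1+\cdots+r_{j-1})$, so Abhyankar's equality gives $\mbox{trdeg}_kV_{\nu_j}/m_{\nu_j}=\mbox{trdeg}_kV_\nu/m_\nu+r_1+\cdots+r_{j-1}$. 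First I would fix, for each $j$, elements $f_{j,1},\ldots,f_{j,r_j}\in V_\nu$ of positive value whose images in $\Gamma_{\nu_j}$ form a $\ZZ$-basis of the summand $\Gamma_j/\Gamma_{j-1}$, together with finitely many elements of $K$ (and their inverses as needed, so that everything lies in $V_\nu$ and outside the appropriate $P^\nu_j$) whose residues generate each $V_{\nu_j}/m_{\nu_j}$ over $k$, and finitely many further elements so that the whole collection generates $K$ over $k$. Localizing the $k$-subalgebra they generate at the center of $\nu$ produces an algebraic local ring of $K$ dominated by $\nu$ and carrying all of this data.

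The core of the argument, carried out in Section~\ref{SecAbh}, is to run a finite sequence of Perron transformations on the tuple $(f_{1,1},\ldots,f_{t,r_t})$. Such a transformation replaces the current parameters by monomials in them whose exponent matrix is a product of nonnegative elementary matrices of determinant $1$ produced by applying the Perron algorithm to the tuple of $\nu$-values, the remaining (auxiliary) generators being carried along; one arranges that the values descend in a suitable well-ordering until, after finitely many steps, the $\nu_j$-values of the transformed parameters $x_{j,1},\ldots,x_{j,r_j}$ are precisely a $\ZZ$-basis of $\Gamma_j/\Gamma_{j-1}$, for every $j$ at once. The output is an algebraic local ring $R$ of $K$, dominated by $\nu$, containing all the $x_{j,i}$, in which the residues of the auxiliary generators realize $V_\nu/m_\nu$, so that $R/m_R=V_\nu/m_\nu$. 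Since $\nu(x_{1,1}),\ldots,\nu(x_{t,r_t})$ are $\ZZ$-linearly independent, the $x_{j,i}$ are algebraically independent over $k$, and the dimension formula for algebraic local rings gives $\dim R=\mbox{trdeg}_kK-\mbox{trdeg}_kV_\nu/m_\nu=n$. One then checks, using the normal form achieved by the Perron process, that $m_R$ is generated by the $n$ elements $x_{1,1},\ldots,x_{t,r_t}$; hence $R$ is regular and these form a regular system of parameters, with no regularity hypothesis needed on the intermediate rings.

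It remains to identify the prime ideals. The description of $\nu_j(x_{j,i})$ as a basis of $\Gamma_j/\Gamma_{j-1}$, together with the correspondence between primes of $V_\nu$ and convex subgroups of $\Gamma_\nu$, shows that $P_j^\nu\cap R$ contains $x_{l,i}$ exactly when $l\ge j$; thus $I_j:=(x_{j,1},\ldots,x_{t,r_t})$, which is a prime of height $r_j+\cdots+r_t$ because it is generated by part of a regular system of parameters, is contained in $P_j^\nu\cap R$. By the dimension formula, $\mbox{ht}(P_j^\nu\cap R)=\mbox{trdeg}_kK-\mbox{trdeg}_k\kappa(P_j^\nu\cap R)$, while $R_{P_j^\nu\cap R}\subseteq V_{\nu_j}$ is a domination and the auxiliary generators realize $V_{\nu_j}/m_{\nu_j}$, so $\kappa(P_j^\nu\cap R)=V_{\nu_j}/m_{\nu_j}$, of transcendence degree $\mbox{trdeg}_kK-(r_j+\cdots+r_t)$ over $k$; therefore $P_j^\nu\cap R$ has height $r_j+\cdots+r_t$ and equals $I_j$, proving 1). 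For 2), both $R_{P_j(R)}\subseteq V_{\nu_j}$ and $R/P_j(R)\subseteq V_\nu/P_j^\nu$ are dominations, $R/P_j(R)$ is a local domain with residue field $V_\nu/m_\nu$ and fraction field $\kappa(P_j(R))$, and the identification $\kappa(P_j(R))=V_{\nu_j}/m_{\nu_j}$ just obtained yields the chain of isomorphisms $\left(R/P_j(R)\right)_{P_j(R)}\cong\left(V/P_j^\nu\right)_{P_j^\nu}\cong V_{\nu_j}/m_{\nu_j}$.

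The main obstacle is the Perron-transformation step of Section~\ref{SecAbh}: proving that the process terminates, and that it can be carried out uniformly across all $t$ levels of the chain of isolated subgroups simultaneously (in the spirit of El~Hitti's treatment of arbitrary-rank valuations), while keeping the residue fields $V_{\nu_j}/m_{\nu_j}$ under control throughout. The genuinely new point, beyond the separable case of Knaf and Kuhlmann, is that when $V_\nu/m_\nu$ is inseparable over $k$ one cannot choose a coefficient field of the completion of the intermediate local rings containing $k$; the bookkeeping of the residue generators must therefore be performed directly with the valuation rather than by splitting off a coefficient field, which is the reason the argument is organized around exhibiting a regular system of parameters instead of around smoothness over $k$.
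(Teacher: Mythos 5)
Your overall strategy — generate a local ring by elements whose values span the right groups and whose residues generate the residue field, then run a Perron/Zariski algorithm to make it regular — matches the paper's approach in spirit, and you correctly identify the subtle point (no coefficient field of the completion containing $k$ when $V_\nu/m_\nu$ is inseparable over $k$). However, there is a structural gap in how the reduction is actually carried out. Your sketch has the Perron process acting directly on a local ring of $K$ and "arranges that the values descend in a suitable well-ordering until... $m_R$ is generated by the $n$ elements $x_{1,1},\ldots,x_{t,r_t}$." This is precisely what the paper does \emph{not} do, and I don't see how it could be made to work directly: a sequence of primitive (Perron) transforms of a singular local ring of $K$ need not become regular, and there is no well-ordering available for that assertion. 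The paper instead chooses a normal model $B$ of $K$ (the integral closure of a polynomial ring), presents $B_{\nu_1}$ as a quotient of a larger \emph{regular} local ring $D$ as in (\ref{eq24}), and works on $D_i$ and its completion $A_i=\widehat{D_i}$, where the pseudo-valuation $\overline\nu_i$ has a kernel $Q_{i+1}(A_i)$. Theorems \ref{Theorem1} and \ref{Theorem2} make $Q_{i+1}$ and $P_{i+1}$ regular primes by transforms; Theorem \ref{Theorem3} runs this by descending induction on $i$; and then $R=D(1)/P_{t+1}(D(1))$ is the regular local ring of $K$. The regularity of $R$ is a consequence of $P_{t+1}(D(1))$ being a regular prime inside the regular ring $D(1)$ — not of some "normal form" of $m_R$ achieved by Perron transforms alone.

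The termination step, which you flag as the main obstacle and essentially defer, is exactly where the remaining content lies. In the paper it is the reduction algorithm of Subsections \ref{MA}--\ref{AlgSeq}: one works in the completion with monic polynomials $f^{(j)}\in k_i[[x_1,\ldots,x_r]][x]$ having $\overline\nu_i(f^{(j)}(z_j))=\infty$ (this requires the preliminary Lemma \ref{LemmaN31}, made possible by (\ref{eq23}) and Hensel's lemma), and shows that either the order $\mu$ drops or a Cauchy sequence produces a formal regular parameter in $Q_{i+1}$ — and then, crucially (Subsection \ref{AlgSeq}), that the entire formal sequence is the completion of an algebraic sequence of transforms via the approximation (\ref{eq3}). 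None of this is visible in your proposal, and without the passage to $D$ and the pseudo-valuation kernel $Q_{i+1}$ it is not clear what object the algorithm is supposed to be reducing. Once $R$ has been produced in the paper's way, your concluding paragraph about identifying $P_j(R)$ and proving statement 2) via (\ref{eq23}) and the dimension formula is essentially correct.
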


Regular parameters as in 1) are called very good parameters of $R$ (Definition \ref{Def1}).

Primitive transforms are defined in Definition \ref{Def2}. They are a particularly simple type of birational transform of a regular local ring. 

 \begin{Theorem}\label{TheoremB} Suppose that $R$ satisfies the conclusions of Theorem \ref{TheoremA}.
\begin{enumerate}
\item[1)] Suppose that $0\ne f\in R$. Then there exists a sequence of primitive transforms (\ref{eq25}) along $\nu$, $R\rightarrow R(1)$, such that $R(1)$ with the resulting very good parameters
$x_{1,1}(1),\ldots,x_{t,r_t}(1)$, satisfies the conclusions of Theorem \ref{TheoremA}, and
$$
f=x_{1,1}(1)^{a_{1,1}}\cdots x_{t,r_t}(1)^{a_{t,r_t}}u
$$
where $a_{1,1},\ldots,a_{t,r_t}\in \NN$ and $u\in R(1)$ is a unit.
\item[2)] Suppose that $I\subset R$ is an ideal. Then there exists a sequence of primitive transforms (\ref{eq25})  along $\nu$, $R\rightarrow R(1)$,  and $a_{1,1},\ldots,a_{t,r_t}\in \NN$ such that 
$$
IR(1)=x_{1,1}(1)^{a_{1,1}}\cdots x_{t,r_t}(1)^{a_{t,r_t}}R(1).
$$
\item[3)] Suppose that $0\ne f\in V_{\nu}$. Then there exits a sequence of primitive transforms (\ref{eq25})
along $\nu$, $R\rightarrow R(1)$, such that 
$$
f=x_{1,1}(1)^{a_{1,1}}\cdots x_{t,r_t}(1)^{a_{t,r_t}}u
$$
where $a_{1,1},\ldots,a_{t,r_t}\in \NN$ and $u\in R(1)$ is a unit.
\end{enumerate}
\end{Theorem}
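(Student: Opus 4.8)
\medskip
\noindent\emph{Proof plan.}
The three parts are facets of one statement, and the plan is to prove 1) directly and to deduce 2) and 3) from it together with combinatorial lemmas on primitive transforms. Two preliminary remarks make the combinatorics transparent. First, because $\nu_j(x_{j,1}),\dots,\nu_j(x_{j,r_j})$ is a $\ZZ$-basis of $\Gamma_j/\Gamma_{j-1}$ for every $j$, the full list of the $\nu(x_{j,i})$ is a $\ZZ$-basis of $\Gamma_\nu\cong\ZZ^n$, where $n=\mbox{rrank }\nu=\sum_j r_j$; hence, writing $x^\alpha=x_{1,1}^{a_{1,1}}\cdots x_{t,r_t}^{a_{t,r_t}}$ for $\alpha=(a_{1,1},\dots,a_{t,r_t})\in\NN^n$, the map $\alpha\mapsto\nu(x^\alpha)$ identifies $\NN^n$ with a sub-semigroup $S\subset\Gamma_\nu$, and distinct monomials in the very good parameters have distinct $\nu$-values. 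Second, a primitive transform is monomial in the parameters, so a monomial times a unit stays a monomial times a unit under any sequence of primitive transforms.

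The core is 1). Given $0\ne f\in R$, I would produce the required primitive transforms by the Perron algorithm of Zariski \cite{Z1} and El Hitti \cite{E1}, adapted to a rank-$t$ Abhyankar valuation: one attaches to $f$ a complexity that records how far the $\nu$-values of the monomials occurring in truncations of $f$ are from forcing a single monomial to dominate, and shows that a primitive transform along $\nu$ read off from the Newton polytope of such a truncation strictly decreases this complexity, all the while preserving the conclusions of Theorem \ref{TheoremA} for the new very good parameters. When the complexity is minimal, $f$ has a unique monomial $x(1)^\alpha$ of minimal $\nu$-value, which forces $\alpha\in\NN^n$ and $x(1)^\alpha\mid f$ in $R(1)$, so $f=x(1)^\alpha u$ with $\nu(u)=0$, i.e.\ $u\in R(1)$ a unit; this is 1). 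Now 2) follows by applying 1) to generators $f_1,\dots,f_m$ of $I$, so that $IR(1)$ becomes a monomial ideal $(x(1)^{\alpha_1},\dots,x(1)^{\alpha_m})$, and then invoking the lemma that a further sequence of primitive transforms along $\nu$ makes a monomial ideal principal --- $\nu$ singles out a unique smallest value among the $x(1)^{\alpha_i}$, and the transforms eventually make the corresponding monomial divide the others. And 3) follows by writing $f=g/h$ with $g,h\in R\setminus\{0\}$, monomializing $g$ and then $h$ by 1) in a common $R(1)$ so that $f=x(1)^{\beta-\delta}(u/w)$ with $\nu(x(1)^{\beta-\delta})=\nu(f)\ge 0$, and invoking the companion lemma that a further sequence of primitive transforms along $\nu$ turns a monomial of non-negative value into a monomial with exponent vector in $\NN^n$ times a unit.

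The main obstacle is not a single computation but the uniform bookkeeping across the $t$ levels, sharpened by inseparability. As stressed in the introduction, $V_\nu/m_\nu$ may be inseparable over $k$, so there need be no coefficient field of the completion $\widehat{R}$ containing $k$, and one cannot expand $f$ with coefficients in $k$ and pass to a leading form over $k$. The plan is to avoid coefficient fields entirely: phrase every reduction purely in terms of $\nu$-values of elements of $R$ and of their residues in the fields $\kappa_i=V_{\nu_i}/m_{\nu_i}$, which by Theorem \ref{TheoremA} 2) are residue fields of localizations of $R$ itself, so that primitive transforms --- which only move the parameters --- suffice both to decrease the complexity and to carry forward, at every step, the full package of Theorem \ref{TheoremA}: the very good parameters, the prime descriptions $P_j(R)=(x_{j,1},\dots,x_{t,r_t})$, and the residue identifications of part 2). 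Establishing that this can be done consistently at all levels at once, and that the algorithm terminates in this generality, is the substance of Section \ref{SecAbh}.
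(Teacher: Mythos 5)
Your overall plan for parts 2) and 3) --- apply 1) to generators of $I$ and then Perron for 2), and write $f=g/h$, monomialize $g$ and $h$ and cancel for 3) --- is what the paper does (the paper simply says 3) is a variation on 1) as in Lemma \ref{Lemma1}, which is precisely the $f=g/h$ device). But your treatment of 1), and in particular your resolution to ``avoid coefficient fields entirely,'' rests on a misreading of the inseparability caveat and ends up proposing a harder route than the paper takes. The introduction warns only that one may not be able to find a coefficient field of $\hat R$ that \emph{contains $k$}; this matters for the descent algorithm in Section \ref{SecAbh} (proof of Theorem \ref{TheoremA}), where one must track algebraicity over $k$. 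It is irrelevant for Theorem \ref{TheoremB}: since $R$ is regular, $\hat R$ is a complete regular local ring of equicharacteristic with residue field $V_\nu/m_\nu$, so a coefficient field $k_1\cong V_\nu/m_\nu$ certainly exists, and the paper just expands $f=\sum\alpha_\beta x^\beta$ in $\hat R=k_1[[x_{1,1},\dots,x_{t,r_t}]]$ with $\alpha_\beta\in k_1$. Nothing requires $k\subset k_1$. With that expansion in hand, one does not need a bespoke ``complexity'' or a Newton-polytope descent: one lets $J$ be the (finitely generated, since $\hat R$ is Noetherian) monomial ideal $(\,x^\beta\mid\alpha_\beta\ne 0\,)$, applies Proposition \ref{Perron} directly --- which packages exactly the Zariski/El Hitti descent you gesture at --- to make $J R(1)$ principal (using that distinct monomials in very good parameters have distinct $\nu$-values, which you note correctly), and concludes $f=x(1)^a u$ with $u$ a unit in $\widehat{R(1)}$; then $u\in K\cap\widehat{R(1)}=R(1)$ by \cite[Lemma 2]{Ab2}. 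So the gap in your proposal is not that your combinatorics would fail, but that you would be re-deriving Proposition \ref{Perron} by hand under the mistaken belief that the coefficient field is unavailable, while leaving the passage from $u\in\widehat{R(1)}$ back to $u\in R(1)$ (the one place a nontrivial input is needed, namely Abhyankar's lemma) implicit.
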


\begin{Theorem}\label{TheoremC} Suppose that $K$ is an algebraic function field over a field $k$ and $\nu$ is an Abhyankar valuation of $K$. Suppose that $S$ is an algebraic local ring of $K$ which is dominated by $\nu$. Then there exists a birational extension $S\rightarrow R$ such that $R$ is a regular algebraic local ring of $K$ which is dominated by $\nu$ and satisfies the conclusions 1) and 2) of Theorem \ref{TheoremA}.
\end{Theorem}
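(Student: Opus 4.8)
\textbf{Proof strategy for Theorem \ref{TheoremC}.} The plan is to deduce Theorem \ref{TheoremC} from Theorems \ref{TheoremA} and \ref{TheoremB}. First I would apply Theorem \ref{TheoremA} to produce a regular algebraic local ring $R_0$ of $K$ dominated by $\nu$, equipped with very good parameters and satisfying conclusions 1) and 2). The goal is then to find a sequence of primitive transforms $R_0\to R$ along $\nu$ such that the resulting ring $R$ contains $S$; by Theorem \ref{TheoremB} the ring $R$ will automatically be a regular algebraic local ring of $K$, dominated by $\nu$, with very good parameters satisfying 1) and 2), and once $S\subseteq R$ the domination of $S$ by $R$ will follow for free.

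Write $S=A_{\mathfrak p}$, where $A$ is a finitely generated $k$-subalgebra of $K$ with quotient field $K$ and $\mathfrak p=m_S\cap A\in\operatorname{Spec}(A)$. Fix $k$-algebra generators $a_1,\dots,a_n$ of $A$. Since $A\subseteq S\subseteq V_{\nu}$, each $a_i$ lies in $V_{\nu}$, so Theorem \ref{TheoremB} 3) applies to it. Applying Theorem \ref{TheoremB} 3) to $a_1$ yields a sequence of primitive transforms $R_0\to R_1$ along $\nu$ such that $a_1$ is a monomial in the very good parameters of $R_1$ times a unit of $R_1$; in particular $a_1\in R_1$. Now repeat with $a_2\in V_{\nu}$ to obtain $R_1\to R_2$; here one uses that a primitive transform carries a monomial in the very good parameters times a unit to a monomial in the new very good parameters times a unit, so that $a_1$ survives in $R_2$. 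Iterating over $a_1,\dots,a_n$ produces a sequence of primitive transforms $R_0\to R$ along $\nu$ with $a_1,\dots,a_n\in R$, hence $A\subseteq R$.

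It remains to check that $R$ dominates $S$. Since $\nu$ dominates both $R$ and $S$ and $A\subseteq R$, we have $m_R\cap A=m_{\nu}\cap A=m_S\cap A=\mathfrak p$, so every element of $A\setminus\mathfrak p$ is a unit in $R$; hence $S=A_{\mathfrak p}\subseteq R$ and $m_S=\mathfrak p S\subseteq m_R$, which forces $m_R\cap S=m_S$. Thus $S\to R$ is a birational extension with $R$ as required, and $R$ satisfies the conclusions 1) and 2) of Theorem \ref{TheoremA} by Theorem \ref{TheoremB}.

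\textbf{Main obstacle.} The only genuinely delicate point I foresee is the bookkeeping in the iteration: one must know that a primitive transform along $\nu$ preserves the property of being a monomial in the very good parameters times a unit, so that elements already absorbed into the ring in earlier steps are not lost by later transforms. This is built into the structure of primitive transforms developed in Section \ref{SecAbh}. One could instead try to apply Theorem \ref{TheoremB} 2) to the ideal of $R_0$ generated by the numerators and denominators of the $a_i$ together with the fact that a regular local ring is a unique factorization domain, but then one still has to clear the possibly negative exponents obtained for each $a_i$; the iterative argument via Theorem \ref{TheoremB} 3) is cleaner, since the $V_{\nu}$-version 3) is precisely designed to handle that positivity issue.
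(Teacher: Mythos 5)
Your proof is correct and follows essentially the same route as the paper: write $S$ as a localization of a finitely generated $k$-algebra, absorb the generators into $R$ via Theorem \ref{TheoremB}, and then verify domination. One small simplification: the ``main obstacle'' you flag is actually a non-issue, since a primitive transform $R_1\to R_2$ is by Definition \ref{Def2} a birational \emph{extension} ($R_1\subset R_2$), so any $a_i$ already in $R_1$ is automatically in $R_2$ without needing to track its monomial form.
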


\subsection{Defect of extensions of valuations} It has become apparent that the possibility of defect in a finite extension of valued fields is the essential obstruction to local uniformization in positive characteristic (\cite{K1}, \cite{Sat} and \cite{CM}). This is somewhat surprising, since defect does not appear explicitly   in the proofs that do exist of local uniformization of arbitrary valuations in a positive characteristic algebraic function field of dimension $\le 3$, including  \cite{Ab2}, \cite{Lip}, \cite{H}, \cite{CJS}, \cite{Ab3}, \cite{C4}, \cite{CoPi}. No general results of local uniformization of arbitrary valuations exist, at the time of this writing,  in positive characteristic algebraic function fields of dimension larger than 3.

 We now define the classical ramification and inertia indices and the defect of a finite extension of valued fields. 

Suppose that $K$ is a  field and $\nu$ is a valuation of $K$. Let  $V_{\nu}$ be the valuation ring of $\nu$ with maximal ideal $m_{\nu}$  and $\Gamma_{\nu}$ be the value group of $\nu$. Suppose that $K\rightarrow L$ is a finite field extension and $\omega$ is an extension of $\nu$ to $L$. We have associated ramification and inertia indices of the extension $\omega$ over $\nu$,
$$
e(\omega/\nu)=[\Gamma_{\omega}:\Gamma_{\nu}]\mbox{ and }
f(\omega/\nu)=[V_{\omega}/m_{\omega}:V_{\nu}/m_{\nu}].
$$
The defect of the extension of $\omega$ over $\nu$ is 
$$
\delta(\omega/\nu)=\frac{[L^h:K^h]}{e(\omega/\nu)f(\omega/\nu)}
$$
where $K^h$ and $L^h$ are  henselizations of the valued fields $K$ and $L$.
This is a positive integer (as shown in \cite{EP}) which is 1 if $V_{\nu}/m_{\nu}$ has characteristic zero and is a power of $p$ if $V_{\nu}/m_{\nu}$ has positive characteristic $p$.

The following theorem is a consequence of \cite[Theorem 1]{K}.

\begin{Theorem}\label{TheoremK} Let $K/k$ be an algebraic function field and $\nu$ be an Abhankar valuation of $K/k$. Suppose that $L$ is a finite extension field of $K$ and $\omega$ is an extension of $\nu$ to $L$. Then the defect $\delta(\omega/\nu)=1$.
\end{Theorem}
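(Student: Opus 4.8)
The plan is to reduce Theorem \ref{TheoremK} to the single statement that $(K,\nu)$ is a \emph{defectless valued field}, meaning $\delta(\omega/\nu)=1$ for every finite extension $L/K$ and every extension $\omega$ of $\nu$ to $L$; once that is known the theorem follows at once for the given $L$ and $\omega$. That an Abhyankar valued function field is defectless is exactly what \cite[Theorem 1]{K} provides, so the real job is only to match up the hypotheses of that theorem: to see that $(K|k,\nu)$ is a valued function field \emph{without transcendence defect} over a \emph{defectless} base, namely the ground field $k$ with its trivial valuation.

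First I would record that $k$ with the trivial valuation $v_0$ is defectless. If $L/k$ is finite and $w$ extends $v_0$, then any $a\in L^{\times}$ satisfies a monic polynomial $X^m+c_{m-1}X^{m-1}+\cdots+c_0$ over $k$ with $c_0\neq 0$ and $w(c_i)=0$ for every $c_i\neq 0$; comparing the valuation of $c_0$ (if $w(a)>0$) or of $a^m$ (if $w(a)<0$) with those of the remaining terms forces $w(a)=0$, so $w$ is trivial on $L$ as well. Hence $e(w/v_0)=1$, $f(w/v_0)=[L:k]$, and a trivially valued field equals its own henselization, so $L^h=L$, $k^h=k$ and $\delta(w/v_0)=[L^h:k^h]/\bigl(e(w/v_0)f(w/v_0)\bigr)=1$.

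Next I would translate the Abhyankar hypothesis. Equality in Abhyankar's inequality (\ref{eq27}) reads $\mbox{rrank }\nu+\mbox{trdeg}_kV_{\nu}/m_{\nu}=\mbox{trdeg}_kK$, and by \cite{Ab1} and \cite[Appendix 2]{ZS2} the group $\Gamma_{\nu}$ is finitely generated and $V_{\nu}/m_{\nu}$ is a finitely generated field extension of $k$; thus the two terms on the left are the rational rank of the value-group extension and the transcendence degree of the residue-field extension of $(K|k,\nu)$ over $(k,v_0)$, and their sum being $\mbox{trdeg}_kK$ is precisely the assertion that $(K|k,\nu)$ has transcendence defect zero. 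Applying \cite[Theorem 1]{K} to the defectless base $(k,v_0)$ and the function field $(K|k,\nu)$ without transcendence defect, we conclude that $(K,\nu)$ is defectless, and therefore $\delta(\omega/\nu)=1$.

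The whole weight of the argument rests on \cite[Theorem 1]{K}, which I am invoking as a black box; everything else above is bookkeeping. The genuine obstacle — and the reason one cannot simply transcribe the classical tame-case proof — is the phenomenon highlighted in the introduction: when $V_{\nu}/m_{\nu}$ is inseparable over $k$ one cannot in general split off and lift a separating transcendence basis of the residue field, so the inductive reduction to a one-dimensional residue- or value-transcendental extension, followed by a Hensel's-lemma/implicit-function estimate, must be carried out without that simplification. Since \cite[Theorem 1]{K} already accomplishes exactly this, nothing further is needed here.
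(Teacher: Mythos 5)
Your proposal is correct and takes the same route as the paper, which offers no proof at all beyond the remark that the theorem ``is a consequence of \cite[Theorem 1]{K}.'' You have simply supplied the hypothesis-checking the paper leaves implicit: that the trivially valued ground field is defectless, and that equality in Abhyankar's inequality is exactly the vanishing of the transcendence defect in Kuhlmann's sense.
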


This theorem plays an essential role in the proof of the local uniformization theorem of \cite{KK}.

It is shown in \cite{CM}, that Zariski's local uniformization algorithm, which takes place in a finite  extension of arbitrary valued fields,   converges if and only if  there is no defect. In particular,  if a projection to a  regular local ring is chosen in which defect occurs, then the resolution algorithm which we use will not converge.

In our proof of local uniformization (Theorems \ref{TheoremA} - \ref{TheoremC}) the fact that there is no defect in an extension of Abhyankar valuations does not appear explicitly, and we do not use Theorem \ref{TheoremK}. However, since we show explicitly that Zariski's
local uniformization algorithm converges in the completion of our local ring, it is implicit in the proof that there is no defect in our finite extension. 

 
\subsection{Essentially finitely generated extensions of valuation rings}

In this subsection, we discuss a very interesting question proposed by Hagen Knaf, and give an application of our local uniformization theorem to improve a positive result on this question from \cite{CN}.

Let $H$ be an ordered subgroup of an ordered abelian group $G$. The initial index $\epsilon(G/H)$ of $H$ in $G$ is defined (\cite[page 138]{End}) as 
$$
\epsilon(G/H)=|\{g\in G_{\ge 0}\mid g<H_{>0}\}|,
$$
where 
$$
G_{\ge 0}=\{g\in G\mid g\ge 0\}\mbox{ and }H_{>0}=\{h\in H\mid h>0\}.
$$
We define the initial index $\epsilon(\omega/\nu)$ of the finite extension $K\rightarrow L$ as $\epsilon(\Gamma_{\omega}/\Gamma_{\nu})$. 

We always have that $\epsilon(\omega/\nu)\le e(\omega/\nu)$ (\cite[(18.3)]{End}).


Let $D(\nu,L)$ be the integral closure of $V_{\nu}$ in $L$. The localizations of $D(\nu,L)$ at its maximal ideals are the valuation rings $V_{\omega_i}$ of the extensions $\omega_i$ of $\nu$ to $L$. We have the following remarkable theorem.

\begin{Theorem}(\cite[Theorem 18.6]{End}\label{ThmEnd}) The ring $D(\nu,L)$ is a finite $V_{\nu}$-module if and only if 
$$
\delta(\omega_i/\nu)=1\mbox{ and }\epsilon(\omega_i/\nu)=e(\omega_i/\nu)
$$
for all extensions $\omega_i$ of $\nu$ to $L$.
\end{Theorem}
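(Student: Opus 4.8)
The plan is to prove the two implications separately. Write $D=D(\nu,L)$, $k_\nu=V_\nu/m_\nu$, $k_{\omega_i}=V_{\omega_i}/m_{\omega_i}$, and abbreviate $e_i=e(\omega_i/\nu)$, $f_i=f(\omega_i/\nu)$, $\delta_i=\delta(\omega_i/\nu)$, $\epsilon_i=\epsilon(\omega_i/\nu)$. The external inputs I would use are: the fundamental identity $[L:K]=\sum_i e_if_i\delta_i$ for a finite extension of valued fields (since $K^h/K$ is separable, $L\otimes_KK^h$ is reduced, hence a product $\prod_iL_i$ of the henselizations $L_i$ of $(L,\omega_i)$, and $[L:K]=\sum_i[L_i:K^h]=\sum_ie_if_i\delta_i$); the facts that $D=\bigcap_iV_{\omega_i}$, that $D$ is semilocal with maximal ideals $\mathfrak M_i=m_{\omega_i}\cap D$ and $D_{\mathfrak M_i}=V_{\omega_i}$; that a finitely generated torsion-free module over a valuation ring is free (take a minimal generating set and divide a hypothetical relation by the coefficient of least value); and that integral closure commutes with the ind-\'etale base change to the henselization, so $D\otimes_{V_\nu}V_{\nu^h}$ is the product of the valuation rings of the $L_i$, with all invariants $e_i,f_i,\delta_i,\epsilon_i$ unchanged and $V_\nu\to V_{\nu^h}$ faithfully flat.

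For "$D$ finite over $V_\nu$ $\Rightarrow$ the conditions hold": as a finite torsion-free module, $D$ is $V_\nu$-free of rank $\dim_K(D\otimes_{V_\nu}K)=[L:K]$ (here $D\otimes_{V_\nu}K=L$), so $\dim_{k_\nu}(D/m_\nu D)=[L:K]$. From $D=\bigcap_iV_{\omega_i}$ one checks $\bigcap_i\bigl(m_\nu V_{\omega_i}\cap D\bigr)=m_\nu D$ by clearing denominators, so the natural map $D/m_\nu D\to\prod_iV_{\omega_i}/m_\nu V_{\omega_i}$ is injective. Filtering $V_{\omega_i}$ by $\omega_i$-value shows $m_\nu V_{\omega_i}=\{x\in V_{\omega_i}:\omega_i(x)\notin S_i\}$, where $S_i=\{\gamma\in(\Gamma_{\omega_i})_{\ge0}:\gamma<(\Gamma_\nu)_{>0}\}$ has $\epsilon_i$ elements; hence $\dim_{k_\nu}(V_{\omega_i}/m_\nu V_{\omega_i})=\epsilon_if_i$. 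Combining these with the fundamental identity gives
$$[L:K]=\dim_{k_\nu}(D/m_\nu D)\le\sum_i\epsilon_if_i\le\sum_i e_if_i\le\sum_i e_if_i\delta_i=[L:K],$$
so all the inequalities are equalities; since $f_i>0$ and $\epsilon_i\le e_i$ this forces $\epsilon_i=e_i$ and $\delta_i=1$ for every $i$.

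For the converse I would first use the base-change facts to reduce to a henselian base with a single extension: $D$ is finite over $V_\nu$ iff each valuation ring of an $L_i$ is finite over $V_{\nu^h}$, and the hypotheses are inherited. So assume $(K,\nu)$ henselian, $\omega$ the unique extension to $L$, $\epsilon(\omega/\nu)=e$ and $\delta(\omega/\nu)=1$, whence $[L:K]=ef$. Using $\epsilon=e$, the set $S=\{\gamma\in(\Gamma_\omega)_{\ge0}:\gamma<(\Gamma_\nu)_{>0}\}$ has exactly $e$ elements, one per coset of $\Gamma_\nu$ in $\Gamma_\omega$; choose $y_1,\dots,y_e\in V_\omega$ whose values run over $S$ (so $y_1=1$), and $z_1=1,z_2,\dots,z_f\in V_\omega$ whose residues form a $k_\nu$-basis of $k_\omega$. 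The $ef=[L:K]$ elements $y_iz_j$ are $K$-linearly independent: writing a relation as $\sum_i y_iw_i$ with $w_i=\sum_j a_{ij}z_j$ and using that each $z_j$ is a unit one sees $\omega(w_i)=\min_j\nu(a_{ij})\in\Gamma_\nu$, so distinct nonzero terms $y_iw_i$ have values in distinct cosets and cannot cancel, forcing every $w_i=0$ and then, after reducing modulo $m_\nu$, every $a_{ij}=0$. Hence the $y_iz_j$ form a $K$-basis of $L$, and for any $x\in V_\omega$, writing $x=\sum_{i,j}a_{ij}y_iz_j$ with $a_{ij}\in K$, the same computation gives $0\le\omega(x)=\min_i\bigl(\omega(y_i)+\omega(w_i)\bigr)$ with $\omega(w_i)=\min_j\nu(a_{ij})\in\Gamma_\nu$; since $\omega(y_i)<(\Gamma_\nu)_{>0}$, the inequality $\omega(y_i)+\omega(w_i)\ge0$ forces $\omega(w_i)\ge0$, i.e.\ all $a_{ij}\in V_\nu$. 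Thus $V_\omega=\sum_{i,j}V_\nu y_iz_j$ is a finite $V_\nu$-module, completing the proof.

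The place where the two hypotheses are genuinely consumed — and what I expect to be the crux — is this last construction. The condition $\delta(\omega/\nu)=1$ is exactly what makes the count come out right, so that the $ef$ products $y_iz_j$ span all of $L$ over $K$ and not merely a proper subspace; and $\epsilon(\omega/\nu)=e$ is exactly what prevents a negative value from "leaking through", i.e.\ what licenses the passage from $\omega(y_i)+\omega(w_i)\ge0$ to $\omega(w_i)\ge0$. The remaining technical care is in the reduction to the henselian single-extension case — establishing that the integral closure base-changes correctly along $V_\nu\to V_{\nu^h}$ and that module-finiteness descends along it — and in carrying out the value-group bookkeeping in arbitrary rank.
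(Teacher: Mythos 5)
The paper does not prove this statement: it is quoted directly from \cite[Theorem 18.6]{End} and used as a black box, so there is no in-paper argument to compare your attempt against. Taken on its own, your blind proof is correct. The sandwich $[L:K]=\dim_{k_\nu}(D/m_\nu D)\le\sum_i\epsilon_if_i\le\sum_ie_if_i\le\sum_ie_if_i\delta_i=[L:K]$ for the forward direction is sound; your ``clearing denominators'' does give injectivity of $D/m_\nu D\to\prod_iV_{\omega_i}/m_\nu V_{\omega_i}$ (for $x$ in the intersection, take $a\in m_\nu$ of least value among the witnesses $a_i$ for $x\in m_\nu V_{\omega_i}$; then $x/a\in\bigcap_iV_{\omega_i}=D$, so $x\in m_\nu D$), and the filtration count $\dim_{k_\nu}(V_{\omega_i}/m_\nu V_{\omega_i})=\epsilon_if_i$ is correct. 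For the converse, the explicit $V_\nu$-basis $\{y_iz_j\}$ of $V_\omega$ in the henselian single-extension case is a clean argument, and the one step you leave implicit --- commutation of integral closure with the ind-\'etale base change $V_\nu\to V_{\nu^h}$ without any Noetherian hypothesis --- does hold: $D\otimes_{V_\nu}V_{\nu^h}$ is \'etale over the normal ring $D$, hence normal, and being integral over $V_{\nu^h}$ with total quotient ring $L\otimes_KK^h$ it is the integral closure there; module-finiteness then descends along the faithfully flat map $V_\nu\to V_{\nu^h}$. I cannot say whether your route reproduces Endler's own, but there is no gap in your argument.
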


Hagen Knaf proposed the following interesting question, asking for  a local form of the above theorem. Essential finite generation is defined in Section \ref{SecPrem}.

\begin{Question}\label{Knaf}(Knaf) Suppose that   $\omega$ is an extension of $\nu$ to $L$. Is $V_{\omega}$ essentially finitely generated over $V_{\nu}$ if and only if 
$$
\delta(\omega/\nu)=1\mbox{ and }\epsilon(\omega/\nu)=e(\omega/\nu)?
$$
\end{Question}

Knaf proved the ``only if''  direction of his question; his proof is reproduced in \cite[Theorem 4.1]{CN}.

The  ``if'' direction of the question is true if $L/K$ is normal or $\omega$ is the unique extension of $\nu$ to $L$ by \cite[Corollary 2.2]{CN}. In \cite[Theorem 1.3]{Hens}, it is shown that if $L$ is an inertial extension of $K$, then $V_{\omega}$ is essentially finitely generated over $V_{\nu}$.

The  ``if'' direction of the question is proven when $K$ is the quotient field of an excellent two-dimensional excellent local domain and $\nu$ dominates $R$ in \cite[Theorem 1.4]{CN}. 
The proof of \cite[Theorem 1.4]{CN} uses the existence of a resolution of excellent surface singularities (\cite{Lip} or \cite{CJS}) and local monomialization of defectless extensions of two dimensional excellent local domains (\cite[Theorem 3.7]{Ramif} and \cite[Theorem 7.3]{CP}). 

The ``if'' direction is proven when $K$ is an algebraic function field over a field $k$ of characteristic zero and $\nu$ is arbitrary in \cite[Theorem 1.3]{C3}.

We obtain the following theorem, which is proven in Section \ref{SecLocUnif}, as a consequence of Theorems \ref{TheoremA} and \ref{TheoremC}. 

\begin{Theorem}\label{TeoAbh} Let $K$ be an algebraic function field over a field $k$, and let $\nu$ be an Abhyankar valuation on $K$.  
Assume that $L$ is a finite  extension of $K$ and that $\omega$ is an extension of $\nu$ to $L$.  If  $\epsilon(\omega/\nu)=e(\omega/\nu)$, then $V_\omega$ is essentially finitely generated over $V_\nu$.
\end{Theorem}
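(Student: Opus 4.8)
The goal is to leverage the local uniformization machinery (Theorems \ref{TheoremA} and \ref{TheoremC}) to reduce Knaf's question to a setting where $V_\omega$ is visibly a localization of a finite $V_\nu$-algebra. The key structural input is Theorem \ref{ThmEnd}: the integral closure $D(\nu,L)$ of $V_\nu$ in $L$ is a finite $V_\nu$-module precisely when $\delta(\omega_i/\nu)=1$ and $\epsilon(\omega_i/\nu)=e(\omega_i/\nu)$ for \emph{all} extensions $\omega_i$ of $\nu$ to $L$. Under our hypotheses, $\nu$ is Abhyankar, so by Theorem \ref{TheoremK} we automatically have $\delta(\omega_i/\nu)=1$ for every extension $\omega_i$. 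The initial-index hypothesis $\epsilon(\omega/\nu)=e(\omega/\nu)$ is only assumed for the particular extension $\omega$, however, so the first issue to confront is that Theorem \ref{ThmEnd} as stated does not directly apply: we would need the initial-index equality at all the $\omega_i$.

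\medskip

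First I would try to circumvent this by replacing $K$ with a larger base field inside $L$ so that $\omega$ becomes the unique extension. Concretely, let $K'$ be the henselization-type decomposition field, or more elementarily pass to a subfield $K\subseteq K'\subseteq L$ with $L/K'$ such that $\omega$ is the only extension of $\nu|_{K'}$; the standard way is to take $K'$ to be the fixed field inside the normal closure, then descend, but since we only need essential finite generation and that property is transitive, it suffices to split the extension $V_\nu \subseteq V_\omega$ into steps. Alternatively—and this is cleaner—I would invoke the criterion in the form: $V_\omega$ is essentially finitely generated over $V_\nu$ iff $V_\omega$ is a localization of a subring of $D(\nu,L)$ that is finite over $V_\nu$. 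So the real target is to exhibit a finite $V_\nu$-subalgebra $A$ of $L$ with $A \subseteq V_\omega$ and $V_\omega = A_{\mathfrak{p}}$ for a prime $\mathfrak{p}$.

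\medskip

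The heart of the argument is then a local-uniformization construction in $L$, adapted from \cite[Theorem 1.3]{C3} (the characteristic-zero case) but now using Theorems \ref{TheoremA} and \ref{TheoremC} to handle the Abhyankar case in arbitrary characteristic. I would: (i) apply Theorem \ref{TheoremC} to find a regular algebraic local ring $S$ of $K$ dominated by $\nu$ with very good parameters adapted to the chain $\Gamma_1\subset\cdots\subset\Gamma_t$; (ii) apply Theorem \ref{TheoremC} again \emph{in the function field $L$} to the valuation $\omega$ — note $\omega$ is also Abhyankar since $[L:K]$ is finite, so $\mathrm{rrank}$ and residue transcendence degree are unchanged — to get a regular local ring $R$ of $L$ dominated by $\omega$ with very good parameters $y_{j,i}$ for $\omega$; (iii) use Theorem \ref{TheoremB} to monomialize, in a further primitive-transform sequence, the images of the generators of $S$ and of a discriminant/different element witnessing $[L:K]$, so that $S \to R$ becomes "monomial" along the valuation. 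The condition $\epsilon(\omega/\nu) = e(\omega/\nu)$ is exactly what forces the monomial matrix relating the $\nu_j$-values of the $x$'s to the $\omega_j$-values of the $y$'s to have the right combinatorial shape (its initial index controls how the value groups sit), and this is what makes the extension $S\to R$ finite after localization rather than merely birational-plus-finite.

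\medskip

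\textbf{Main obstacle.} The delicate point—as flagged in the introduction—is the possible absence of a coefficient field of $\widehat{R}$ containing $k$ when $V_\omega/m_\omega$ is inseparable over $k$; this is precisely the difficulty that the methods of Section \ref{SecAbh} are built to overcome, so I would feed that machinery in as a black box. The genuinely new bookkeeping is matching the residue-field extension $V_\omega/m_\omega$ over $V_\nu/m_\nu$ with the residue extension of $R_{\mathfrak p}$ over $S_{\mathfrak q}$ at each level of the rank filtration, using part 2) of Theorem \ref{TheoremA}, and checking that $\epsilon(\omega/\nu)=e(\omega/\nu)$ — equivalently, since $\delta=1$ by Theorem \ref{TheoremK}, the hypothesis of Theorem \ref{ThmEnd} at $\omega$ — propagates to a statement about \emph{all} $\omega_i$ or else can be bypassed by the uniqueness reduction above. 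Concretely: the hard step is showing that after the primitive transforms, $D(\nu, L)$ localized appropriately equals $R$, i.e. that $R$ is already integrally closed over the monomialized $S$ and that no further blowing up in $L$ is needed; this is where defectlessness (implicit, via Theorem \ref{TheoremK}) is doing the essential work, guaranteeing the Zariski algorithm terminates and hence that $R$ is module-finite over $S$ after inverting one element of $S$. Granting that, $V_\omega = R_{m_\omega \cap R}$ is a localization of the finite $V_\nu$-algebra $S[1/g]\to R$ composed with the localization $S \to V_\nu$, hence essentially finitely generated, completing the proof.
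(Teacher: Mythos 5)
Your sketch is not the paper's proof and, more importantly, it is not yet a proof at all: the crucial finiteness statement is conceded rather than established. The paper's own argument for Theorem \ref{TeoAbh} is a one-line deferral: it observes that the proof of \cite[Theorem~1.5]{CN} goes through verbatim once one notices that the only place the separability hypothesis on $V_\omega/m_\omega$ was used there was to invoke \cite[Theorem~1.1]{KK} (through \cite[Theorem~7.2]{CN}), and that Theorems~\ref{TheoremA} and~\ref{TheoremB} of the present paper now supply the needed local uniformization and monomialization statements without separability. In other words, the paper does not re-derive the argument at all; it points to an existing proof and swaps one black box for another. You, by contrast, attempt to reconstruct the whole argument from scratch, which would be fine if the reconstruction closed, but it does not.

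Concretely, there are two gaps you should not paper over. First, your reduction to the case where $\omega$ is the unique extension of $\nu$ is only gestured at. The move ``invoke the criterion: $V_\omega$ is essentially finitely generated over $V_\nu$ iff $V_\omega$ is a localization of a subring of $D(\nu,L)$ that is finite over $V_\nu$'' uses a characterization you have not proved; the ``if'' direction is clear, but the ``only if'' direction is precisely the sort of nontrivial statement that needs an argument, and you never actually produce the intermediate field $K'$ or verify that $V_{\nu|_{K'}}$ remains essentially finitely generated over $V_\nu$ in a way that lets transitivity do its job. Second, and more seriously, the entire conclusion hinges on the paragraph beginning ``Granting that, $V_\omega=R_{m_\omega\cap R}$ is a localization of the finite $V_\nu$-algebra $S[1/g]\to R$.'' You never establish that $R$ (or any $R$ obtained after further primitive transforms) is module-finite over a localization of $S$: the assertion that ``defectlessness guarantees the Zariski algorithm terminates and hence that $R$ is module-finite over $S$ after inverting one element'' is exactly the theorem that needs to be proved, not a consequence that can be waved in. Nor is the role of the hypothesis $\epsilon(\omega/\nu)=e(\omega/\nu)$ ever pinned down beyond the claim that it ``forces the monomial matrix to have the right combinatorial shape.'' Until that step is actually carried out — which is precisely what \cite[Theorems~1.5 and 7.2]{CN} do — the argument does not close.
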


The defect $\delta(\omega/\nu)=1$ with the assumptions of Theorem \ref{TeoAbh} by Theorem \ref{TheoremK} as $\nu$ is an Abhyankar valuation.

Theorem \ref{TeoAbh} is proven in \cite[Theorem 1.5]{CN}, with the additional assumption that $V_\omega/m_\omega$ is separable over $k$. 
To prove the stronger Theorem \ref{TeoAbh}, we must only modify  the proof of \cite[Theorem 1.5]{CN} by observing that
the statement of  \cite[Theorem 7.2]{CN} is true without the  assumption that  $V_{\nu}/m_{\nu}$ is separable over $k$, using Theorems \ref{TheoremA} and \ref{TheoremB} of this paper in place of \cite[Theorem 1.1]{KK}.  

A complete positive answer to Question \ref{Knaf} has been found by Rankeya Datta at the time that this article was in press, in the article ``Essential finite generation of extensions of valuation rings'', arXiv:2101.08377.

\section{Notation}\label{SecPrem}
We will denote the non-negative integers by $\NN$ and $\ZZ_{>0}$ will denote the positive integers. 

If $R$ is a local ring we will denote its maximal ideal by $m_R$. A regular prime ideal in a Noetherian local ring is a prime ideal $P$ such that $R/P$ is a regular local ring. If $A$ is a domain then ${\rm QF}(A)$ will denote the quotient field of $A$. Suppose that $A$ is a subring of a ring $B$. We will say that $B$ is essentially finitely generated over $A$ (or that $B$ is essentially of finite type over $A$) if $B$ is  a localization of a finitely generated $A$-algebra. If $R$ and $S$ are local rings such that $R$ is a subring of $S$ and $m_S\cap R=m_R$ then we say that $S$ dominates $R$.

Suppose that $k$ is a field  and $K/k$ is an algebraic function field over $k$. An algebraic local ring of $K$ is a local domain which is essentially of finite type over $k$ and whose quotient field is $K$. A birational extension $R\rightarrow R_1$ of an algebraic local ring $R$ of $K$ is an algebraic local ring $R_1$ of $K$ such that $R_1$ dominates $R$.

If $\nu$ is a valuation of a field $K$, we will denote the valuation ring of $\nu$ by $V_{\nu}$ and its maximal ideal by $m_{\nu}$.
If a valuation ring $V_{\nu}$ dominates $A$ we will also say that $\nu$ dominates $A$. If $A$ is a subring of a valuation ring $V_{\nu}$, we will write $A_{\nu}$ for the localization of $A$ at $m_{\nu}\cap A$.

A valuation $\nu$ of a function field $K/k$ is a valuation of $K$ which is trivial on $k$.

A pseudo-valuation $\mu$ on a local domain $R$ is a prime ideal $P$ of $R$ and a valuation $\mu$ on the quotient field of $R/P$  which dominates $R/P$.
If $\mu$ is a pseudo-valuation on a domain $R$, we will write $\mu(f)=\infty$ if $f$ is in the kernel $P$ of the map from  $R$ to $V_{\mu}$. We will write $P=P(\mu)_R$.

\section{Extensions of Pseudo-Valuations}\label{SecPV}

Suppose that $T$ is a normal excellent local ring. Let $P(\omega)_T$ be a prime ideal of $T$ and $\omega$ be a rank one valuation of the quotient field of $T/P(\omega)_T$ which dominates $T/P(\omega)_T$.  The valuation $\omega$ induces a pseudo-valuation of $T$ (as defined in the above Section \ref{SecPrem}), where we define $\omega(f)=\omega(\overline f)$ if the class $\overline f$ of $f$ in $T/P(\omega)_T$ is nonzero, and define $\omega(f)=\infty$ if $f\in P(\omega)_T$. 
Let 
\begin{equation}\label{eqN1}
Q(T)=\left\{
\begin{array}{l}
 \mbox{ Cauchy sequences $\{f_n\}$ in $T$ such that for all $l\in \ZZ_{>0}$,}\\
 \mbox{ there exists $n_l\in \ZZ_{>0}$ such that $\omega(f_n)\ge l\omega(m_T)$ if $n\ge n_l$}
 \end{array}
 \right\}.
\end{equation}
We have that $Q(T)$ is a prime ideal in $\hat T$ and $Q(T)\cap T =P(\omega)_T$. There is a unique extension of $\omega$ to a valuation of the quotient field of $\hat T/Q(T)$ which dominates $\hat T/Q(T)$. It is an immediate extension (there is no extension of the value group or the residue fields of the valuation rings). Thus there is a unique extension of $\omega$ to a pseudo-valuation of $\hat T$ such that $P(\omega)_{\hat T}=Q(T)$.
We define
$$
\sigma(T)=\dim \hat T/Q(T).
$$

The objects $Q(T)$ and $\sigma(T)$ are defined in \cite{C1}, \cite{CG} and \cite{E1}. Concepts of this type are studied in \cite{GAST}.

The following lemma is proven in the case that $\omega$ is a rank 1 valuation dominating $T$ (and not just a pseudo-valuation) in \cite[Lemma 6.3]{C1}. The proof is essentially the same here, although we require a little more notation. 

\begin{Lemma}\label{LemmaD1} Let notation be as in this section. We further suppose that $V_{\omega}/m_{\omega}$ is an algebraic field extension of $T/m_T$. 
\begin{enumerate}
\item[1)] Let $I$ be a nonzero ideal in $T$ such that $I\not\subset P(\omega)_{T}$. Let $f\in I$ be such that $\omega(f)=\omega(I)$. Let
$$
J=\cup_{j=1}^{\infty}\left(P(\omega)_{ T}T[\frac{I}{f}]:I^jT[\frac{I}{f}]\right),
$$
which is the strict transform of the ideal $P(\omega)_{T}$ in $T[\frac{I}{f}]$.
Then $J$ is a prime ideal in $T[\frac{I}{f}]$, the map $T/P(\omega)_{T}\rightarrow   T[\frac{I}{f}]/J$ is birational ($T[\frac{I}{f}]/J$ is of finite type over $T/P(\omega)_{T}$ and both rings have the same quotient field) and there exists a maximal ideal $n$ of $T[\frac{I}{f}]$ containing $J$ such that $\omega$ dominates $(T[\frac{I}{f}]/J)_n$ and so $\omega$ is a pseudo-valuation on $T_1=T[\frac{I}{f}]_n$ with $P(\omega)_{T_1}=J_n$.

\item[2)] Suppose that $T_1$ is normal. Then $\sigma(T_1)\le \sigma(T)$.
\end{enumerate}
\end{Lemma}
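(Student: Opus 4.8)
The plan is to follow the classical blow-up computation, but tracking the pseudo-valuation $\omega$ through the affine chart $T[\frac{I}{f}]$ and then showing that the dimension $\sigma$ of the completion-quotient does not increase. For part 1), first I would observe that $T[\frac{I}{f}]$ is a finitely generated $T$-algebra which is a subring of the quotient field of $T$ (the fractions $g/f$ with $g\in I$ have a well-defined image in $\mathrm{QF}(T)$), and that $\omega$ makes sense on it since $\omega(g/f)=\omega(g)-\omega(f)\ge 0$ for all $g\in I$ by the choice of $f$. The strict transform $J=\bigcup_j\big(P(\omega)_T T[\frac{I}{f}]:I^j T[\frac{I}{f}]\big)$ is prime because it is the contraction to $T[\frac{I}{f}]$ of the prime ideal $P(\omega)_T T[\frac{I}{f}]_I$ of the localization at the multiplicative set $\{1,f,f^2,\dots\}$ — here one uses that $I\not\subset P(\omega)_T$ so $f\notin P(\omega)_T$, hence $P(\omega)_T$ survives in the localization. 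The induced map $T/P(\omega)_T\to T[\frac{I}{f}]/J$ is then the standard affine blow-up of the ideal $\overline I=IT/P(\omega)_T$ at the element $\overline f$, so it is of finite type and birational. To produce the maximal ideal $n$: $\omega$ restricts to a valuation (pseudo-valuation) on $T[\frac{I}{f}]/J$ dominating it after localizing at $m_\omega\cap (T[\frac{I}{f}]/J)$; pulling this prime back to $T[\frac{I}{f}]$ gives a prime $\mathfrak q\supset J$ with $\omega$ dominating $(T[\frac{I}{f}]/J)_{\mathfrak q/J}$, and since $V_\omega/m_\omega$ is algebraic over $T/m_T$ — hence over $(T[\frac{I}{f}]/J)/(\mathfrak q/J)$, which is a finitely generated field extension of $T/m_T$ of transcendence degree $0$ — the ring $(T[\frac{I}{f}]/J)/(\mathfrak q/J)$ is a field, so $\mathfrak q$ is maximal; take $n=\mathfrak q$. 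Setting $T_1=T[\frac{I}{f}]_n$ one reads off $P(\omega)_{T_1}=J_n$.

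For part 2), under the hypothesis that $T_1$ is normal (so that $\sigma(T_1)$ is defined via (\ref{eqN1}) applied to $T_1$), the goal is $\sigma(T_1)\le\sigma(T)$. The approach is to produce a local homomorphism $\hat T\to \hat T_1$ compatible with the pseudo-valuations, carrying $Q(T)$ into $Q(T_1)$, and inducing a finite (or at least an injective integral, or an injective local) map $\hat T/Q(T)\to \hat T_1/Q(T_1)$; dimension then cannot go up. Concretely, the generators $g/f$ ($g\in I$) of $T[\frac{I}{f}]$ over $T$ satisfy $\omega(g/f)\ge 0$, and for those with $\omega(g/f)>0$ the image lies in $m_{T_1}$; the Cauchy/convergence condition defining $Q(T_1)$ is governed by $\omega$-values, which are preserved, so a Cauchy sequence in $T$ lying in $Q(T)$ maps to one in $Q(T_1)$. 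This yields a well-defined local map $\phi:\hat T/Q(T)\to \hat T_1/Q(T_1)$ of complete local domains. To get $\dim$-monotonicity I would argue that $\phi$ makes $\hat T_1/Q(T_1)$ a quotient of (a localization-completion of) a finitely generated algebra over $\hat T/Q(T)$ whose generic fibre is trivial — essentially because the blow-up map $T/P(\omega)_T\to T_1/P(\omega)_{T_1}$ is birational, so after completing along $\omega$ the extension of fraction fields of $\hat T/Q(T)$ and $\hat T_1/Q(T_1)$ is trivial and the extension is between complete local domains of the same fraction field data, forcing $\dim \hat T_1/Q(T_1)\le\dim \hat T/Q(T)$.

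The main obstacle, and the step requiring real care, is part 2): completion does not commute with arbitrary birational (non-finite) extensions, so one cannot simply say ``$\hat T/Q(T)$ and $\hat T_1/Q(T_1)$ have the same fraction field, hence the same dimension.'' The correct mechanism — and the reason the rank-one case is cited from \cite[Lemma 6.3]{C1} — is that $Q(T)$ is built precisely so that $\hat T/Q(T)$ is the valuation-theoretic completion in which $\omega$ becomes an immediate (rank-one) valuation with $\dim \hat T/Q(T)=\sigma(T)$; one then uses that the strict transform under blow-up, after passing to these completions, corresponds to an inclusion $\hat T/Q(T)\hookrightarrow \hat T_1/Q(T_1)$ of analytically irreducible complete local domains dominated by the same (immediate) valuation, and invokes the fact that such an inclusion cannot strictly raise dimension — this is where the normality of $T_1$ (hence analytic normality, $\hat T_1$ a normal domain, $Q(T_1)$ prime) is essential, and where the hypothesis that $V_\omega/m_\omega$ is algebraic over $T/m_T$ re-enters to keep the residue extension from contributing transcendence degree. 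I would model this argument line-by-line on the proof of \cite[Lemma 6.3]{C1}, the only adjustment being the bookkeeping needed because $\omega$ is a pseudo-valuation (vanishing on $P(\omega)_T$) rather than an honest valuation dominating $T$.
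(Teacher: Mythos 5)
Your treatment of part 1) is essentially the same as the paper's: you identify $T[\frac{I}{f}]/J$ with the blow-up of $I(T/P(\omega)_T)$ along $\overline f$, and you use the hypothesis that $V_\omega/m_\omega$ is algebraic over $T/m_T$ to conclude that the residue ring at the center of $\omega$ is a field, hence that $n$ is maximal. (Your alternate justification of primality of $J$ via localizing at $f$ is fine and amounts to the same thing.)

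Part 2) has a genuine gap. You attempt to argue directly about the map $\hat T/Q(T)\to \hat T_1/Q(T_1)$, and the mechanism you propose to control dimension is that ``an inclusion of analytically irreducible complete local domains dominated by the same (immediate) valuation cannot strictly raise dimension.'' That assertion is not justified and is not what drives the paper's argument, precisely because $\hat T/Q(T)\to\hat T_1/Q(T_1)$ is obtained by a completion and so is not of finite type; the dimension formula (\cite[Theorem 15.6]{NM}) does not apply to it, and you correctly flag this difficulty yourself, but the resolution you sketch does not close it. The paper's route is to introduce the intermediate ring $\hat T[\frac{I\hat T}{f}]_{\tilde n}$, take the strict transform $\tilde Q$ of $Q(T)$ in it, and observe two separate things: first, the extension $\hat T/Q(T)\to \hat T[\frac{I\hat T}{f}]_{\tilde n}/\tilde Q$ \emph{is} of finite type, birational, with finite residue extension, so the dimension formula gives $\sigma(T)=\dim \hat T[\frac{I\hat T}{f}]_{\tilde n}/\tilde Q$; second, by flatness of completion this equals $\dim \hat T_1/\tilde Q\hat T_1$, and since $\tilde Q\subset Q(T_1)$ (elements of $\tilde Q$ have infinite $\omega$-value), one gets the one-sided inequality $\sigma(T_1)=\dim \hat T_1/Q(T_1)\le\dim\hat T_1/\tilde Q\hat T_1=\sigma(T)$. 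The crucial missing idea in your writeup is that the comparison ideal is $\tilde Q$, not $Q(T_1)$: the proof does not claim, and does not need, that $\hat T/Q(T)\hookrightarrow\hat T_1/Q(T_1)$ is birational, only that $\tilde Q\subset Q(T_1)$ so that passing from $\tilde Q$ to $Q(T_1)$ can only drop the dimension. Without isolating this inclusion and applying the dimension formula to the finite-type stage, the argument does not go through.
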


\begin{proof} We first consider Statement 1). Let $\overline f$ be the class of $f$ in $T/P(\omega)_{T}$. Since $I\not\subset P(\omega)_{T}$ we have that $\overline f\ne 0$ and 
$$
T\left[\frac{I}{f}\right]/J=(T/P(\omega)_{T})\left[\frac{I(T/P(\omega)_{T})}{\overline f}\right]
$$
is a birational extension of $T/P(\omega)_{T}$ and all its elements  have nonnegative $\omega$-value. Let $n$ be the  prime ideal in $T[\frac{I}{f}]$ of elements of positive $\omega$-value.
$T/m_{T}\subset T[\frac{I}{f}]/n\subset V_{\omega}/m_{\omega}$ and $V_{\omega}/m_{\omega}$ is assumed to be algebraic over $T/m_{T}$. Thus
$T[\frac{I}{f}]/n$  is finite over $T/m_T$. By \cite[Theorem 15.6]{NM}, $n$ is a maximal ideal of $T[\frac{I}{f}]$.

We now establish statement 2). The completion of $T_1$ at it's maximal ideal is
$\hat T_1=\widehat{\hat T[\frac{I\hat T}{f}]_{\tilde n}}$ where $\tilde n=m_{\hat T_1}\cap \hat T[\frac{I\hat T]}{f}]$. Let 
$$
\tilde Q=\cup_{j=1}^{\infty}\left(Q(T)\hat T\left[\frac{I\hat T}{f}\right]_{\tilde n}:I^j\hat T\left[\frac{I\hat T}{f}\right]_{\tilde n}\right),
$$
the strict transform of $Q(T)$ in $\hat T[\frac{I\hat T}{f}]_{\tilde n}$. Since $I\not\subset P(\omega)_{\tilde T}$ we have that $\omega(f)=\infty$ if $f\in \tilde Q$. Thus $\tilde Q\subset Q(T_1)$. Now $\hat T/Q(\hat T)\rightarrow \hat T[\frac{I\hat T}{f}]_{\tilde n}/\tilde Q$ is birational and the residue field extension is finite, so by the dimension formula \cite[Theorem 15.6]{NM}, 
$$
\sigma(T)=\dim \hat T[\frac{I\hat T}{f}]_{\tilde n}/\tilde Q=\dim \hat T_1/\tilde QT_1
$$
 since completion is flat. Thus $\sigma (T)\ge \dim \hat T_1/Q(T_1)=\sigma(T_1)$.
\end{proof}

\section{Abhyankar valuations}\label{SecAbh}

Let $K$ be an algebraic function field of a field $k$ and 
 let $\nu$ be an Abhyankar valuation of $K/k$.
Since $\nu$ is Abhyankar,  there exists a transcendence basis 
$$
x_{0,1},\ldots,x_{0,r_0},x_{1,1},\ldots,x_{1,r_1},x_{2,1},\ldots,x_{t,1},\ldots,x_{t,r_t}\in V_{\nu}
$$
of $K$ over $k$ such that the classes $\overline x_{0,1},\ldots,\overline x_{0,r_0}$ of $x_{0,1},\ldots,x_{0,r_0}$ in $V_{\nu}/m_{\nu}$ are a transcendence basis of $V_{\nu}/m_{\nu}$ over $k$ and $\nu(x_{i,1}),\ldots,\nu(x_{i,r_i})$ is a $\ZZ$-basis of $\Gamma_i/\Gamma_{i-1}$ for $1\le i\le t$. In particular, 
$\nu(x_{i,1}),\ldots,\nu(x_{t,r_t})$ is a $\ZZ$-basis of $\Gamma_{\nu_i}$ for all $i$, and the set of classes of 
$x_{0,1},\ldots,x_{i-1,r_{i-1}}$ in $V_{\nu_i}/m_{\nu_i}$ is a transcendence basis of $V_{\nu_i}/m_{\nu_i}$ over $k$.

\begin{Definition}\label{Def1}
Suppose that $A$ is a regular algebraic local ring of $K$ which  is dominated by $\nu_i$. A regular system of parameters $z_{i,1},\ldots,z_{t,r_t}$ in $A$ such that $\nu_i(z_{j,1}),\ldots,\nu_i(z_{j,r_j})$ is a basis of $\Gamma_j/\Gamma_{j-1}$ for $i\le j\le t$ is called a very good regular system of parameters in $A$.
\end{Definition}

\begin{Definition}\label{Def2} Suppose that $A$ is a  regular algebraic local ring of $K$ which is dominated by $\nu_i$. Suppose that $z_{i,1},\ldots,z_{t,r_t}$ is a very good regular system of parameters in $A$. 
  A primitive transform along $\nu_i$, $A\rightarrow A_1$, is defined by
\begin{equation}\label{eq25}
z_{j,k}=z_{j,k}(1)z_{l,m}
\end{equation}
where $\nu_i(z_{j,k})>\nu_i(z_{l,m})$. We define $A_1=A[z_{j,k}(1)]_{\nu_i}$. Then $A_1$ is a regular algebraic local ring of $K$ which has the good regular system of parameters $z_{i,1}(1),\ldots,z_{t,r_t}(1)$ where $z_{\alpha,\beta}(1)=z_{\alpha,\beta}$ if $(\alpha,\beta)\ne (j,k)$.
\end{Definition}

The following proposition is \cite[Proposition 7.4]{CN} or \cite[Lemma 4.1]{CM}, which is a generalization of \cite[Theorem 2]{Z1}.

\begin{Proposition}\label{Perron} Suppose that $A$ is a regular algebraic local ring of $K$ which is dominated by $\nu_i$ and $z_{i,1},\ldots,z_{t,r_t}$ is a very good regular system of parameters in $A$. Suppose that 
$M_1=\prod_{\alpha,\beta}z_{\alpha,\beta}^{a_{\alpha,\beta}}$ and $M_2=\prod_{\alpha,\beta}z_{\alpha,\beta}^{b_{\alpha,\beta}}$ are monomials such that $\nu_i(M_1)<\nu_i(M_2)$. Then there exists a sequence of primitive transforms along $\nu_i$, 
$$
A\rightarrow A_1\rightarrow\cdots\rightarrow A_s,
$$
 such that $M_1$ divides $M_2$ in $A_s$.
\end{Proposition}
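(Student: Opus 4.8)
The plan is to recognize this as Zariski's Perron transformation theorem — so one could simply invoke \cite[Theorem 2]{Z1} (or \cite[Proposition 7.4]{CN}, \cite[Lemma 4.1]{CM}) — and, for a self-contained argument, to run that algorithm on exponent vectors. First I would translate the data into combinatorics. Put $w_{\alpha,\beta}=\nu_i(z_{\alpha,\beta})$; these are positive elements of $\Gamma_{\nu_i}\cong\ZZ^N$, $N=r_i+\cdots+r_t$, and they form a $\ZZ$-basis, hence are pairwise distinct. By Definition \ref{Def2}, a primitive transform $z_{j,k}=z_{j,k}(1)z_{l,m}$ is allowed exactly when $w_{j,k}>w_{l,m}$, and its effect is to add the $(j,k)$-exponent of any monomial to its $(l,m)$-exponent (leaving the others alone), to replace $w_{j,k}$ by $w_{j,k}-w_{l,m}>0$, and to return a very good system of parameters. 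In particular $\nu_i(M)$ of a fixed monomial $M$ is unchanged by a primitive transform, so the hypothesis $\nu_i(M_1)<\nu_i(M_2)$ is preserved by every transform. Since replacing $(M_1,M_2)$ by $(M_1/g,M_2/g)$ with $g=\gcd(M_1,M_2)$ alters neither the divisibility to be proved nor this inequality, I would assume from the outset that $M_1$ and $M_2$ have disjoint supports; then ``$M_1\mid M_2$'' means ``$M_1=1$'', i.e.\ ``$\nu_i(M_1)=0$''.

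Next I would run the Perron loop. While $M_1\ne1$ (note also $M_2\ne1$, since $\nu_i(M_2)>\nu_i(M_1)\ge0$), let $(l,m)$ be the unique index minimizing $w$ among those occurring in $M_1$ or $M_2$, and perform $z_{j,k}=z_{j,k}(1)z_{l,m}$ for every \emph{other} such index $(j,k)$ — legal because $w_{j,k}>w_{l,m}$. After these transforms, $M_1$ and $M_2$ are both divisible by a common power $z_{l,m}^{c}$ with $c\ge1$ (using $M_1\ne1\ne M_2$); cancel it, so the supports are again disjoint. Each pass keeps $\nu_i(M_2)-\nu_i(M_1)$ fixed and strictly decreases $\nu_i(M_1)\in\Gamma_{\nu_i}$, by exactly $c\,w_{l,m}>0$.

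The only real difficulty is to show this loop terminates: as $\Gamma_{\nu_i}$ need not have a well-ordered positive cone, the strict descent of $\nu_i(M_1)$ does not by itself finish the proof. Here I would argue as Zariski does, by induction on the rank of $\nu_i$, using the chain of convex subgroups $\Gamma_i/\Gamma_{i-1}\subset\cdots\subset\Gamma_t/\Gamma_{i-1}$ of $\Gamma_{\nu_i}$, whose successive quotients $\Gamma_j/\Gamma_{j-1}$ are archimedean. The inequality $w_{j,k}>w_{l,m}$ forces the block index $j$ of $(j,k)$ to be at least that of $(l,m)$, so a primitive transform never perturbs a more significant archimedean layer than the one in which it operates; this lets one first force the divisibility ``modulo the top layer'' — a rank-one question, dispatched by a finite bookkeeping on the now-archimedean monotone quantity $\nu_i(M_1)$ — and then apply the inductive hypothesis to the lower-rank remainder. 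When the loop halts, $M_1=1$, i.e.\ $M_1\mid M_2$, in the ring $A_s$ reached after the resulting finite sequence of primitive transforms, which is the claim. Everything except this termination step is bookkeeping, so that is where I expect the work to lie.
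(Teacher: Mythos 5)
You correctly identify the statement as Zariski's Perron transformation theorem and you are right that the obstacle to the naive descent is termination. However, the loop you write down does not terminate in general, and the rank induction you sketch to fix this does not match the loop you actually run. Here is a concrete failure. Take $t>i$, $\gamma\ge 1$, $M_1=z_{t,1}^3$ and $M_2=z_{i,c}^{\gamma}z_{t,2}^5$, with the block-$t$ values arranged so that $\nu_i(M_1)<\nu_i(M_2)$. The index of smallest value in the combined support is $(i,c)$, so a pass of your loop performs $z_{t,1}=z_{t,1}(1)z_{i,c}$ and $z_{t,2}=z_{t,2}(1)z_{i,c}$, giving $M_1=z_{i,c}^3z_{t,1}(1)^3$ and $M_2=z_{i,c}^{\gamma+5}z_{t,2}(1)^5$; cancelling the common factor $z_{i,c}^3$ leaves $M_1'=z_{t,1}(1)^3$ and $M_2'=z_{i,c}^{\gamma+2}z_{t,2}(1)^5$, which has exactly the shape you started with, and $(i,c)$ is again the minimal index since $z_{t,1}(1)$ and $z_{t,2}(1)$ retain block-$t$ value. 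The loop therefore never selects a block-$t$ variable, never alters a block-$t$ exponent, and never reaches $M_1=1$. The structural point is that a primitive transform $z_{j,k}=z_{j,k}(1)z_{l,m}$ only adds to the $(l,m)$-exponent, so while the minimal index lies in a low block the high-block exponents of both monomials are frozen; your strict descent of $\nu_i(M_1)$ then happens entirely inside the convex subgroup $\Gamma_{t-1}/\Gamma_{i-1}$ and never forces the ``top layer'' divisibility that your induction sketch says should be handled first. Making that sketch work would require reorganizing the loop to operate block by block from the top down, which is not what is written.

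The paper's proof reduces to the rank-one theorem in a single non-iterative step, from the top block down. It picks the \emph{largest} block index $l$ at which $\prod_j z_{l,j}^{a_{l,j}}\ne\prod_j z_{l,j}^{b_{l,j}}$. Since the blocks above $l$ agree and the blocks below $l$ contribute only values in the convex subgroup $\Gamma_{l-1}/\Gamma_{i-1}$, the hypothesis $\nu_i(M_1)<\nu_i(M_2)$ descends to a strict inequality between the block-$l$ values in the archimedean quotient $\Gamma_l/\Gamma_{l-1}$; Zariski's Theorem~2 (the rank-one result you cite) is then applied \emph{within the block-$l$ variables alone}, producing parameters in which the block-$l$ factor of $M_1$ divides that of $M_2$ with a strictly positive surplus in some $z_{l,1}$. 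Finally, for each lower index $(\alpha,\beta)$ with $\alpha<l$ at which $M_1$ still has an exponent surplus, the paper iterates $z_{l,1}=z_{l,1}(1)z_{\alpha,\beta}$; this is always legal since a block-$l$ value dominates every block-$\alpha$ value, and each iteration raises the $(\alpha,\beta)$-exponent of $M_2/M_1$ by the fixed positive amount $b_{l,1}-a_{l,1}$, so finitely many iterations clear every lower block. No further termination analysis is needed beyond the rank-one case already supplied by Zariski.
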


We remark that if $\nu_i(M_1)=\nu_i(M_2)$ in the statement of Proposition \ref{Perron}, then we have that $M_1=M_2$.

\begin{proof}
There exists a largest index $l$ such that $\prod_jz_{l,j}^{a_{l,j}}\ne \prod_jz_{l,j}^{b_{l,j}}$.
Then $\nu_i(\prod_jz_{l,j}^{a_{l,j}})<\nu_i(\prod_jz_{l,j}^{b_{l,j}})$.  By \cite[Theorem 2]{Z1}, there exists a sequence of primitive transforms  $A\rightarrow A_s$ along $\nu_i$ in the  variables  $z_{l,j}(m)$  from the regular parameters  of $A_m$ as $j$ varies, 
such that $\prod_jz_{l,j}^{a_{l,j}}$ divides $\prod_jz_{l,j}^{b_{l,j}}$ in $A_s$. Writing $M_1$ and $M_2$ in the regular parameters $z_{i,j}(s)$ of $A_s$ as
$$
M_1=\prod z_{i,j}(s)^{a_{i,j}(s)}\mbox{ and }M_2=\prod z_{i,j}(s)^{b_{i,j}(s)},
$$
we have that 
$$
M_2=\left(\prod_{i<l,j}z_{i,j}(s)^{b_{i,j}(s)}\right)\left(\prod_j z_{l,j}(s)^{b_{l,j}(s)}\right)
\left(\prod_{i>l,j}z_{i,j}(s)^{a_{i,j}(s)}\right)
$$
with $b_{l,j}(s)-a_{l,j}(s)\ge 0$ for all $j$ and for some $j$, $b_{l,j}(s)-a_{l,j}(s)> 0$.
Without loss of generality, this occurs for $j=1$. (If $b_{l,j}(s)=a_{l,j}(s)$ for all $j$, then $a_{l,j}=b_{l,j}$ for all $j$ in contradiction to our choice of $l$.)

Now perform a  sequence of primitive transforms $A_s\rightarrow A_m$ along $\nu_i$ defined by
$z_{l,1}(t)=z_{l,1}(t+1)z_{\alpha,\beta}(t+1)$ for $\alpha<l$ and $\beta$ such that $b_{\alpha,\beta}(t)<a_{\alpha,\beta}(t)$ where 
$$
M_1=\prod z_{\alpha,\beta}(t)^{a_{\alpha,\beta}(t)}\mbox{ and }M_2=\prod z_{\alpha,\beta}(t)^{b_{\alpha,\beta}(t)}
$$
to achieve that $M_1$ divides $M_2$ in $A_m$.
\end{proof}

\subsection{Construction of an algebraic local ring which is dominated by $\nu$ and has some good properties}

Let $L=k(x_{0,1},\ldots,x_{t,r_t})$. $L\rightarrow K$ is a finite field extension. Let $\omega=\nu|L$ and $\omega_i=\nu_i|L$.

Let 
$$
R_i=k[x_{0,1},\ldots,x_{t,r_t}]_{\omega_i}
=k(x_{0,1},\ldots,x_{i-1,r_{i-1}})[x_{i,1},\ldots,x_{t,r_t}]_{(x_{i,1},\ldots,x_{t,r_t})},
$$
a regular local ring. 

\begin{Lemma}\label{Lemma1} Suppose that $f\in V_{\omega_i}$. Then there exists a sequence of primitive transforms along $\omega_i$,
$$
R_i\rightarrow R^1_i=R_i[x_{i,1}(1),\ldots,x_{t,r_t}(1)]_{(x_{i,1}(1),\ldots,x_{r,t_r}(1))}
$$
such that $f=uM$ where $M$ is a monomial in $x_{i,1}(1),\ldots,x_{r,t_r}(1)$ and $u$ is a unit in $R^1_i$.
\end{Lemma}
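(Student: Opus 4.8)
The plan is to reduce the statement to monomializing polynomials, handled one monomial at a time by repeated use of Proposition \ref{Perron}. We may assume $f\neq 0$. Put $B_i=k(x_{0,1},\ldots,x_{i-1,r_{i-1}})[x_{i,1},\ldots,x_{t,r_t}]$, so that $R_i=(B_i)_{(x_{i,1},\ldots,x_{t,r_t})}$ and $L={\rm QF}(B_i)$, and write $f=a/b$ with $0\neq a,b\in B_i$. The core claim is: for every $0\neq g\in B_i$ there is a sequence of primitive transforms $R_i\to R_i'$ along $\omega_i$ after which $g=u'N'$, with $N'$ a monomial in the regular parameters of $R_i'$ and $u'$ a unit of $R_i'$.

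To prove the claim, write $g=\sum_{j=1}^{s}c_jN_j$ with the $N_j$ distinct monomials in $x_{i,1},\ldots,x_{t,r_t}$ and $0\neq c_j\in k(x_{0,1},\ldots,x_{i-1,r_{i-1}})$. Each $c_j$ is a unit of $R_i$ and remains a unit after any primitive transform, since it lies in the coefficient field of $R_i$; equivalently $\nu_i(c_j)=0$, the residues $\overline x_{0,1},\ldots,\overline x_{i-1,r_{i-1}}$ being algebraically independent over $k$. Also, since $\nu_i(x_{i,1}),\ldots,\nu_i(x_{t,r_t})$ is a $\ZZ$-basis of the free group $\Gamma_{\nu_i}$, distinct monomials in these variables have distinct $\nu_i$-values; so after relabelling we may assume $\nu_i(N_1)<\nu_i(N_j)$ for $j\geq 2$. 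For $j=2,\ldots,s$ in turn, apply Proposition \ref{Perron} to the pair $N_1,N_j$ (expressed in the current regular parameters), obtaining primitive transforms after which $N_1$ divides $N_j$. When this is finished, in $R_i'$ we have $g=N_1\big(c_1+\sum_{j\geq 2}c_j(N_j/N_1)\big)$, where each $N_j/N_1$ is a monomial in the regular parameters of $R_i'$ of positive $\nu_i$-value, hence lies in $m_{R_i'}$; thus the second factor is a unit of $R_i'$ and $N_1$ is a monomial in the regular parameters of $R_i'$, proving the claim.

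The point requiring care is that a later application of Proposition \ref{Perron} must not destroy divisibilities already obtained. It does not: a primitive transform (Definition \ref{Def2}) $z_{j,k}=z_{j,k}(1)z_{l,m}$ induces on the monoid of monomials in the regular parameters the substitution $z_{j,k}\mapsto z_{j,k}(1)z_{l,m}(1)$ (the other parameters being fixed), which is a homomorphism of the monoid of monomials with exponents in $\NN$ into itself, hence preserves divisibility of monomials; and a primitive transform changes neither $L$ nor $\omega_i$, so the $\nu_i$-values of the intrinsic elements $N_j\in L$ are unchanged and $\nu_i(N_1)<\nu_i(N_j)$ persists. Thus the relations $N_1\mid N_j$ accumulate and Proposition \ref{Perron} stays applicable at each stage.

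Finally, apply the claim first to $a$ and then to $b$ (the monomialization of $a$ survives the second step by the divisibility-preservation above), obtaining primitive transforms $R_i\to R_i''$ with $a=u_1N$, $b=u_2M$ for units $u_1,u_2$ and monomials $N,M$ in the regular parameters of $R_i''$. Then $f=(u_1/u_2)(N/M)$, and $\nu_i(f)\geq 0$ (as $\omega_i=\nu_i|_L$ and $f\in V_{\omega_i}$) forces $\nu_i(N)\geq\nu_i(M)$. If $\nu_i(N)=\nu_i(M)$ then $N=M$, so $f=u_1/u_2$ is a unit and we take $M$ to be the empty monomial. Otherwise $\nu_i(N)>\nu_i(M)$, and one further application of Proposition \ref{Perron} to the pair $M,N$ produces primitive transforms $R_i''\to R_i^1$ after which $M$ divides $N$; then $N/M$ is a monomial in the regular parameters of $R_i^1$ and $f=(u_1/u_2)(N/M)$ with $u_1/u_2$ a unit of $R_i^1$. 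Since $R_i^1$ is reached from $R_i$ by primitive transforms, it has the form $R_i[x_{i,1}(1),\ldots,x_{t,r_t}(1)]_{(x_{i,1}(1),\ldots,x_{t,r_t}(1))}$ required in the statement. The only real obstacle is the bookkeeping in the third paragraph, and it is dispatched by the monoid-homomorphism observation.
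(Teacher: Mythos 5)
Your proof is correct, and the overall strategy — write $f$ as a quotient of polynomials, use Proposition \ref{Perron} to make each a monomial times a unit, then one more Perron pass to make the numerator's monomial divisible by the denominator's — is the same as the paper's. What differs is the technical route. The paper expands $g,h\in R_i$ as power series in $\hat R_i=\kappa[[x_{i,1},\ldots,x_{t,r_t}]]$, applies Perron to the monomial ideals $I,J\subset R_i$ they generate, reads off that $g/x(1)^d$ and $h/x(1)^e$ are units in $\widehat{R(1)}$, and then descends those units to $R(1)$ by $L\cap\widehat{R(1)}=R(1)$ (Abhyankar's Lemma 2). You avoid the completion entirely: by taking $a,b$ to be polynomials you have finite monomial expansions, you extract the minimal-value monomial $N_1$, and after Perron makes $N_1$ divide the others you see the unit $c_1+\sum_{j\ge 2}c_j(N_j/N_1)$ directly in $R_i'$, since $c_1$ is a unit of $R_i'$ and the remaining terms lie in $m_{R_i'}$. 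The price of staying inside $R_i$ is the bookkeeping — checking that later applications of Proposition \ref{Perron} don't undo earlier divisibilities — which you handle correctly by observing that a primitive transform acts as an $\NN$-linear monoid homomorphism on exponent vectors, and that $\omega_i$-values of the fixed elements $N_j\in L$ are intrinsic and so persist. Your route buys independence from \cite[Lemma 2]{Ab2} and from the completion; the paper's route buys brevity by letting the completion absorb the combinatorics. Both are valid.
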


It follows that $V_{\omega_i}/m_{\omega_i}=k(x_{0,1},\ldots,x_{i-1,r_{i-1}})$.

\begin{proof} Write $f=\frac{g}{h}$ with $g,h\in R_i$. Expand $g=\sum \alpha_{\lambda_{i,1},\ldots, \lambda_{t,r_t}}
x_{i,1}^{\lambda_{i,1}}\cdots x_{t,r_t}^{\lambda_{t,r_t}}$ and $h=\sum \beta_{\lambda_{i,1},\ldots, \lambda_{t,r_t}}
x_{i,1}^{\lambda_{i,1}}\cdots x_{t,r_t}^{\lambda_{t,r_t}}$ in $\hat R_i=\kappa[[x_{i,1},\ldots,x_{t,r_t}]]$ with
$$
\alpha_{\lambda_{i,1},\ldots, \lambda_{t,r_t}}, \beta_{\lambda_{i,1},\ldots, \lambda_{t,r_t}}\in 
\kappa=k(x_{0,1},\ldots,x_{0,r_0}).
$$
Let $I=(x_{i,1}^{\lambda_{i,1}}\cdots x_{t,r_t}^{\lambda_{t,r_t}}\mid \alpha_{\lambda_{i,1},\ldots, \lambda_{t,r_t}}\ne 0
)$ and $J=(x_{i,1}^{\lambda_{i,1}}\cdots x_{t,r_t}^{\lambda_{t,r_t}}\mid \beta_{\lambda_{i,1},\ldots, \lambda_{t,r_t}}\ne 0
)$ which are ideals in $R_i$. By Proposition \ref{Perron}, there exists a sequence of primitive transforms along $\omega_i$, $R_i\rightarrow R(1)=R_i[x_{i,1}(1),\ldots,x_{t,r_t}(1)]_{(x_{i,1}(1),\ldots,x_{r,t_r}(1))}$ and $d_{i,1},\ldots,d_{t,r_t}, e_{i,1},\ldots,e_{r,t_r}\in \NN$  such that $IR(1)=x_{i,1}(1)^{d_{i,1}}\cdots x_{t,r_t}(1)^{d_{r,t_r}}R(1)$
and $JR(1)=x_{i,1}(1)^{e_{i,1}}\cdots x_{t,r_t}(1)^{e_{r,t_r}}R(1)$ and $d_{k,l}\ge e_{k,l}$ for all $k,l$. Thus $g=x_{i,1}(1)^{d_{i,1}}\cdots x_{t,r_t}(1)^{d_{t,r_r}}\gamma$ and 
$$
h=x_{i,1}(1)^{e_{i,1}}\cdots x_{t,r_t}(1)^{e_{t,r_r}}\delta
$$
where $\gamma,\delta\in \widehat{R(1)}$ are units. Now $\gamma,\delta\in K\cap \widehat{R(1)}=R(1)$ are units
(by  \cite[Lemma 2]{Ab2}). Thus $f=x_{i,1}(1)^{d_{i,1}-e_{i,1}}\cdots x_{t,r_t}(1)^{d_{t,r_t}-e_{t,r_r}}\gamma\delta^{-1}$ has the desired form in $R(1)$.

\end{proof}

\begin{Remark}\label{Remark1} The sequence of primitive transforms $R_i\rightarrow R_i^1$ of Lemma \ref{Lemma1} induces a sequence of primitive transforms 
$$
R_1\rightarrow R_1^1=R_1[x_{i,1}(1),\ldots,x_{t,r_t}(1)]_{(x_{1,1},\ldots,x_{i-1,r_{i-1}},x_{i,1}(1),\ldots,x_{t,r_t}(1))}
$$
  along $\omega$ such that $(R_1^1)_{(x_{i,1}(1),\ldots,x_{t,r_t}(1))}=R_i^1$.
\end{Remark}

Now $V_{\omega_i}/m_{\omega_i}=k(x_{0,1},\ldots,x_{i-1,r_{i-1}})\rightarrow V_{\nu_i}/m_{\nu_i}$ is a finite algebraic extension since $\nu_i$ is Abhyankar. Thus there exist $\overline\alpha_{i,1},\ldots,\overline\alpha_{i,s_i}\in V_{\nu_i}/m_{\nu_i}$ such that 
$$
V_{\nu_i}/m_{\nu_i}=(V_{\omega_i}/m_{\omega_i})(\overline\alpha_{i,1},\ldots,\overline\alpha_{i,s_i}).
$$

Let $T_i$ be the integral closure of $V_{\omega_i}$ in $L$. There exists a maximal ideal $m$ of $T_i$ such that $(T_i)_m=V_{\nu_i}$ and so $V_{\nu_i}/m_{\nu_i}=T_i/m$. Let $\alpha_{i,1},\ldots,\alpha_{i,s}$ be lifts of $\overline\alpha_{i,1},\ldots,\overline \alpha_{i,s_i}$ to $T_i$. Let $f_j(x)\in V_{\omega_i}[x]$ be the minimal polynomial of $\alpha_{i,j}$ over $L$. Let $a_{i,j}^k$ be the coefficients of the $f_j(x)$. By Lemma \ref{Lemma1}, and Remark \ref{Remark1}, there exists a sequence of primitive transforms along $\omega$, $R_1\rightarrow R_1^1$, such that 
$$
a_{i,j}^k\in (R_1^1)_{\omega_i}=(R_1^1)_{(x_{i,1}(1),\ldots,x_{t,r_t}(1))}
$$
for all $j,k$.

We may thus construct a sequence of primitive transforms along $\omega$
$$
R_1\rightarrow R(1)\rightarrow \cdots\rightarrow R(t)
$$
such that 
$$
R(t)=k(x_{0,1},\ldots,x_{0,r_0})[x_{1,1}(t),\ldots,x_{t,r_t}(t)]_{(x_{1,1}(t),\ldots,x_{t,r_t}(t))},
$$
and
$$
(R(t))_{\omega_i}=k(x_{0,1},\ldots,x_{0,r_0},x_{1,1}(1),\ldots,x_{i-1,r_{i-1}}(t))[x_{i,1}(t),\ldots,x_{t,r_t}(t)]_{(x_{i,1}(t),\ldots,x_{t,r_t}(t))}
$$
for $1\le i\le t$ and $a_{i,j}^k\in (R(t))_{\omega_i}$ for all $j,k$ and $i$.

Replacing $R_1$ with $R(t)$, we may assume that $a_{i,j}^k\in (R_1)_{\omega_i}$ for all $i,j,k$. 

Let $B$ be the integral closure of $R=R_1$ in $K$.
Let $P_i(R)=P^{\nu}_i\cap R=P^{\omega}_i\cap R$. Then the localization 
$B_{P_i(R)}$ of the $R$-module $B$ at the prime ideal $P_i(R)$ of $R$
 is the integral closure in $K$ of $R_{\omega_i}=R_{P_i(R)}$ for all $i$. Thus $\alpha_{i,j}\in B_{P_i(R)}$ for all $j$. Now $B_{P_i(R)}$ is a finite $R_{\omega_i}$ module for all $i$ and $B_{P_i(R)}\cap P^{\nu}_i$ is a maximal ideal in $B_{P_i(R)}$. Thus there exists a maximal ideal $m_i$ in $B_{P_i(R)}$ such that $m_i=B_{P_i(R)}\cap m_{\nu_i}$. Thus $\widehat{B_{\nu_i}}$ is the $m_i$-adic completion of $B_{P_i(R)}$ with respect to $m_i$ and $B_{\nu_i}/m_{B_{\nu_i}}=B_{P_i(R)}/m_i$. We have
\begin{equation}\label{eq23}
\begin{array}{l}
V_{\omega_i}/m_{\omega_i}=R_{\omega_i}/m_{R_{\omega_i}}=k(\overline{x}_{0,1},\ldots,\overline{ x}_{i-1,r_{i-1}})
\subset k(\overline{x}_{0,1},\ldots,\overline{ x}_{i-1,r_{i-1}})(\overline{\alpha}_{i,1},\ldots,\overline{\alpha}_{i,s_i})\\
\subset B_{P_i(R)}/m_i=B_{\nu_i}/m_{B_{\nu_i}}\subset V_{\nu_i}/m_{\nu_i}=(V_{\omega_i}/m_{\omega_i})(\overline \alpha_{i,1},\ldots,\overline \alpha_{i,s_i})
\end{array}
\end{equation}
so $B_{P_i(R)}/m_i=B_{\nu_i}/m_{B_{\nu_i}}=V_{\nu_i}/m_{\nu_i}$ for all $i$.

Now $B_{P_i(R)}$ is a finite $R_{\omega_i}$-module which implies that 
$$
P_i(R)(B_{P_i(R)})_{m_i}=(x_{i,1},\ldots,x_{t,r_t})B_{m_i}
$$
 is an $m_i$-primary ideal in $B_{m_i}$, so that $x_{i,1},\ldots,x_{t,r_t}$ is a system of parameters in $B_{m_i}$, since
\begin{equation}\label{eq1}
\dim B_{m_i}=\dim R_{\omega_i}=r_i+\cdots+r_t.
\end{equation}
Thus $x_{i,1},\ldots,x_{t,r_t}$ is a system of parameters in $\widehat{B_{m_i}}=\widehat{B_{\nu_i}}$.

Let $k_i$ be a coefficient field of $\widehat{B_{\nu_i}}$ (so $k_i\cong V_{\omega_i}/m_{\omega_i}$).
$k_i[[x_{i,1},\ldots,x_{t,r_t}]]\subset \widehat{B_{\nu_i}}$ is a power series ring, by \cite[Corollary 2, page 293]{ZS2}, and $\widehat{B_{\nu_i}}$ is a finite module over $k_i[[x_{i,1},\ldots,x_{t,r_t}]]$ by \cite[Remark on page 293]{ZS2}. 
By our construction, $\nu_i(x_{i,1}),\ldots,\nu_i(x_{t,r_t})$ is a $\ZZ$-basis of $\Gamma_{\omega_i}$ and $k_i\cong V_{\omega_i}/m_{\omega_i}$ for $1\le i\le t$.
Let $P_{i+1}(B_{\nu_i})$ be the prime ideal
$$
P_{i+1}(B_{\nu_i})=(P^{\nu}_{i+1}V_{\nu_i})\cap B_{\nu_i}\subset (P^{\nu}_iV_{\nu_i})\cap B_{\nu_i}=m_{B_{\nu_i}}.
$$
Equation (\ref{eq1}) implies that 
$$
\dim B_{\nu_i}=r_i+\cdots+r_t=\dim R_{\omega_i}
$$
for all $i$.
Thus
\begin{equation}\label{eqN21}
\dim B_{\nu_i}/P_{i+1}(B_{\nu_i})=\dim B_{\nu_i}-\dim (B_{\nu_i})_{P_{i+1}(B_{\nu_i})}=\dim B_{\nu_i}-\dim B_{\nu_{i+1}}=r_i.
\end{equation}
Now $\widehat{B_{\nu_i}}$ is reduced and equidimensional of the same dimension $r_i+\cdots+r_t$ as $B_{\nu_i}$ and
\begin{equation}\label{eq2}
P_{i+1}(B_{\nu_i})\widehat{B_{\nu_i}}=I_1\cap\cdots\cap I_u
\end{equation}
where the $I_j$ are prime ideals in $\widehat{B_{\nu_i}}$ such that
$$
\dim \widehat{B_{\nu_i}}/I_j=\dim B_{\nu_i}/P_{i+1}(B_{\nu_i})=r_i
$$
for all $j$ by \cite[Scholie IV.7.8.3]{EGAIV.2}.
Define a prime ideal $Q_{i+1}=Q_{i+1}(\widehat B_{\nu_i})$ in $\widehat{B_{\nu_i}}$ of the Cauchy sequences $\{f_j\}$ in $B_{\nu_i}$ such that given $\gamma\in \Gamma_i/\Gamma_{i-1}$, there exists $j_0$ such that $\nu_i(f_j)>\gamma$ for $j\ge j_0$.
The ideal $Q_{i+1}(\widehat{B_{\nu_i}})$ is the ideal $Q(\widehat{B_{\nu_i}})$ defined in (\ref{eqN1}).

We have an injective finite map
\begin{equation}\label{eqN33}
k_i[[x_{i,1},\ldots,x_{i,r_i}]]\cong k_i[[x_{i,1},\ldots,x_{t,r_t}]]/Q_{i+1}\cap k_i[[x_{i,1},\ldots,x_{t,r_t}]]
\rightarrow \widehat{B_{\nu_i}}/Q_{i+1}
\end{equation}
so 
\begin{equation}\label{eqN20}
\dim \widehat{B_{\nu_i}}/Q_{i+1}=r_i.
\end{equation}
 Now $P_{i+1}(B_{\nu_i})\widehat{B_{\nu_i}}\subset Q_{i+1}$ implies one of the $I_j$ in (\ref{eq2}) is $Q_{i+1}$. Thus after possibly reindexing the $I_j$, we have that $Q_{i+1}=I_1$ and 
\begin{equation}\label{eq8}
P_{i+1}(B_{\nu_i})\widehat{B_{\nu_i}}=Q_{i+1}\cap I_2\cap \cdots\cap I_u.
\end{equation}

$\nu_i$ is composite with the valuation $\overline\nu_i$ with valuation ring 
$V_{\overline\nu_i}=(V_{\nu}/P^{\nu}_{i+1})_{P^{\nu}_i}$,  value group $\Gamma_{\overline\nu_i}=\Gamma_i/\Gamma_{i-1}$ and residue field $(V_{\nu}/P^{\nu}_i)_{P^{\nu}_i}$. $\overline\nu_i$ naturally induces a valuation on $\widehat{B_{\nu_i}}/Q_{i+1}$.

\begin{equation}\label{eq24}
\begin{array}{l}
\mbox{There exists a regular local ring $D$ which is essentially of finite type over $k$,}\\
\mbox{such that $B_{\nu_1}$ is a quotient of $D$.}
\end{array}
\end{equation}

 Let $P_i(D)$ be the preimage of $P_i^{\nu}$ under the composition $D\rightarrow B_{\nu_1}\rightarrow V_{\nu}$ and let $D_i=D_{P_i(D)}$.
Then  $D_i$ are regular local rings which are essentially of finite type over $k$ with regular parameters 
$x_{i,1},\ldots,x_{i,r_i},x_{i+1,1},\ldots,x_{t,r_t},y_1,\ldots,y_m$ in $D_i$ such that $B_{\nu_i}$ is a quotient of $D_i$
(we identify the $x_{i,j}$ with their image in $B$).

We have that $\nu_i$ induces a pseudo-valuation on $D_i$ with kernel $P_{t+1}(U_i)$.
Let 
\begin{equation}\label{eqN22}
A_i=\widehat{D_i}=k_i[[x_{i,1},\ldots,x_{i,r_i},x_{i+1,1},\ldots,x_{t,r_t},y_1,\ldots,y_m]].
\end{equation}
Let $Q=Q_{i+1}(A_i)$ be the preimage of $Q_{i+1}$ in $A_i$. Then $\overline\nu_i$ induces  a pseudo-valuation on $\widehat{B_{\nu_i}}$ with kernel $Q_{i+1}$ and induces a pseudo-valuation on $A_i$ with kernel $Q=Q_{i+1}(A_i)$.
$\overline\nu_i$ induces a pseudo-valuation on $D_i$, with kernel 
\begin{equation}\label{eq 15}
P_{i+1}(D_i)=Q(A_i)\cap D_i
\end{equation}
which is the preimage of $P_{i+1}^{\nu}$ in $D_i$.

More generally, suppose that 
\begin{equation}\label{eq17}
\begin{array}{l}
\mbox{$U_i$ is a regular local ring which is essentially of finite type over $k$ with quotient field $K$}\\
\mbox{such that $U_i$ dominates $D_i$ and the pseudo-valuation $\overline\nu_i$ dominates $U_i$.}
\end{array}
\end{equation}

We then have a natural homomorphism $\pi: U_i\rightarrow V_{\nu_i}$. For $j$ such that $i\le j\le t+1$, define
\begin{equation}\label{eq18}
P_j(U_i)=\pi^{-1}(P_j^{\nu}V_{\nu_i}).
\end{equation}
We have a chain of prime ideals
$$
P_{t+1}(U_i)\subset P_t(U_i)\subset \cdots \subset P_i(U_i)=m_{U_i}.
$$

Suppose that 
\begin{equation}\label{eq16}
x_{i,1},\ldots,x_{i,r_i},x_{i+1,1},\ldots,x_{i+1,r_{i+1}},\ldots,x_{t,r_t},y_1,\ldots,y_m
\end{equation}
is a regular system of parameters in $U_i$ such that $\nu_j(x_{j,1}),\ldots,\nu_j(x_{j,t_j})$ is a $\ZZ$-basis of $\Gamma_j/\Gamma_{j-1}$ for $i\le j\le t$. Such a regular system of parameters will be called a good regular system of parameters, or simply ``good regular parameters''. Sometimes we will abuse notation and allow a good system of parameters to be a permutation of an ordered list (\ref{eq16}).

\subsection{Transforms}

We define four types of transformations $U_i\rightarrow U(1)$ along $\nu_i$.
\vskip .1truein
\noindent{\bf Type $(1,j)$ with $i\le j$.} This is  a transform 
$$
x_{j,k}=\prod_{l=1}^{r_j}x_{j,l}(1)^{a_{k,l}},\mbox{ for }1\le k\le r_j
$$
where $a_{k,l}\in\NN$, ${\rm Det}(a_{k,l})=\pm 1$ and $\nu_j(x_{j,l}(1))>0$ for all $l$.
We define 
$$
U(1)=U_i[x_{j,1}(1),\ldots,x_{j,r_j}(1)]_{\nu_i}.
$$
\vskip .1truein
\noindent{\bf Type $(2,j)$ with $i\le j$.}  Suppose $u\in\{x_{j+1,1},\ldots,x_{t,r_t},y_1,\ldots,y_m\}$ with $\nu_i(u)\in \Gamma_{j+1}$. Let $a_1,\ldots,a_{r_j}\in \NN$. Define
$$
u=x_{j,1}^{a_1}\cdots x_{j,r_j}^{a_{r_j}}u(1)
$$
and $U(1)=U_i[u(1)]_{\nu_i}$.

\vskip .1truein
\noindent{\bf Type $(3,j)$ with $i\le j$.} Suppose $\nu_i(y_k)\in\Gamma_j$ and $\nu_j(y_k)=a_1\nu_j(x_{j,1})+\cdots+a_{r_j}\nu_j(x_{j,r_j})$ with $a_1,\ldots,a_{r_j}\in \NN$ and 
$$
\nu_i\left(\frac{y_k}{x_{j,1}^{a_1}\cdots x_{j,r_j}^{a_{r_j}}}\right)\ge 0.
$$
Define $y_k=x_{j,1}^{a_1}\cdots x_{j,r_j}^{a_{r_j}}y_k(1)$ and $U(1)=U_i[y_k(1)]_{\nu_i}$.

\vskip .1truein
\noindent{\bf Type $(4,j)$ with $i\le j$.} Suppose that $u\in\{y_1,\ldots,y_m\}$ with $\nu_i(u)\in \Gamma_j$. Suppose that $a_1,\ldots,a_{r_j}\in \NN$ are such that $\nu_i(u)>\nu_i(x_{j,1}^{a_1}\cdots x_{j,r_j}^{a_{r_j}})$. Define $u=x_{j,1}^{a_1}\cdots x_{j,r_j}^{a_{r_j}}u(1)$ and
$U(1)=U_i[u(1)]_{\nu_i}$.

In all four cases, $U(1)$ has a natural good system of regular parameters. 

\subsection{Formal Transforms}\label{SecFor}

We now construct sequences of formal transforms along $\overline\nu_i$
\begin{equation}\label{eqN2}
A_i=\widehat{U_i}\rightarrow A(1)\rightarrow\cdots\rightarrow A(l).
\end{equation}

We suppose that 
$$
x_{i,1},\ldots,x_{t,r_t},y_1,\ldots,y_e
$$
is a given good regular system of parameters in $U_i$. In $A(0)=A_i=\hat{U_i}$, we suppose that 
$$
x_{i,1}(0),\ldots,x_{t,r_t}(0),y_1(0),\ldots,y_e(0)
$$
is a regular system of parameters such that $x_{1,1}(0)=x_{1,1},\ldots,x_{t,r_t}(0)=x_{t,r_t}$ and $y_1(0),\ldots,y_e(0)$ may be formal (not in $U_i$). Suppose that we have inductively
defined $A(0)\rightarrow \cdots \rightarrow A(n)$, with a regular system of parameters
$$
x_{i,1}(n),\ldots,x_{t,r_t}(n),y_1(n),\ldots,y_e(n).
$$
\vskip .1truein
\noindent{\bf Formal transforms of type $(1,i)$.}
We define a formal transform $A(n)\rightarrow A(n+1)$ of type $(1,i)$ as follows. 
Set 
$$
x_{i,k}(n)=\prod_{l=1}^{r_i}x_{i,l}(n+1)^{a_{i,l}},\mbox{ for }1\le k\le r_i
$$
where $a_{i,l}\in\NN$, ${\rm Det}(a_{i,l})=\pm 1$ and $\overline \nu_i(x_{i,l}(n+1))>0$ for all $l$.
By Lemma \ref{LemmaD1}, since the ring $A(n)[x_{i,1}(n+1),\ldots,x_{i,r_i}(n+1)]$ is the blow up of an ideal in $A(n)$ generated by monomials in $x_{i,1}(n),\ldots,x_{i,r_i}(n)$ which is thus not contained in $Q(A(n))$,
the pseudo-valuation $\overline\nu_i$ extends uniquely to a pseudo-valuation which dominates a local ring 
$$
(A(n)[x_{i,1}(n+1),\ldots,x_{i,r_i}(n+1)])_{m_n}
$$
 where $m_n$ is a maximal ideal of  $A(n)[x_{i,1}(n+1),\ldots,x_{i,r_i}(n+1)]$, and extends uniquely to a pseudo-valuation which dominates it's completion
$$
A(n+1)=(A(n)[x_{i,1}(n+1),\ldots,x_{i,r_i}(n+1)]_{m_n})^{\widehat{}}.
$$
We extend $x_{i,1}(n+1),\ldots,x_{i,r_i}(n+1)$ to a regular system of parameters 
$$
x_{i,1}(n+1),\ldots,x_{i,r_i}(n+1),x_{i+1,1}(n+1),\ldots,x_{t,r_t}(n+1),y_1(n+1),\ldots,y_e(n+1)
$$
in $A(n+1)$ where
$x_{j,k}(n+1)=x_{j,k}(n)$ for $j>i$ and $y_1(n+1),\ldots,y_e(n+1)$ can be chosen arbitrarily to make a regular system of parameters.

\vskip .1truein
\noindent{\bf Formal transforms of type $(2,i)$.}
We define a formal transform $A(n)\rightarrow A(n+1)$ of type (2,i) as follows. Suppose that 
$u\in \{x_{i+1,1}(n),\ldots,x_{t,r_t}(n),y_1(n),\ldots,y_e(n)\}$ and $\overline\nu_i(u)=\infty$.
Suppose that  $a_1,\ldots,a_{r_j}\in \NN$.
Define $u=x_{i,1}(n)^{a_1}\cdots x_{i,r_i}(n)^{a_{r_j}}u'$.

By Lemma \ref{LemmaD1}, since the ring $A(n)[u']$ is the blow up of the ideal
$(x_{i,1}(n)^{a_1}\cdots x_{i,r_i}(n)^{a_{r_j}},u)$ in $A(n)$ 
which is  not contained in $Q(A(n))$,
the pseudo-valuation $\overline\nu_i$ extends uniquely to a pseudo-valuation which dominates a local ring 
$$
(A(n)[u'])_{m_n}
$$
 where $m_n$ is a maximal ideal of  $A(n)[u']$, and extends uniquely to a pseudo-valuation which dominates it's completion
$$
A(n+1)=\widehat{A(n)[u']_{m_n}}.
$$

We define a regular system of parameters
$$
x_{i,1}(n+1),\ldots,x_{i,r_i}(n+1),x_{i+1,1}(n+1),\ldots,x_{t,r_t}(n+1),y_1(n+1),\ldots,y_e(n+1)
$$
in $A(n+1)$. 

 If $u\in \{y_1(n),\ldots,y_e(n)\}$, then 
$x_{j,k}(n+1)=x_{j,k}(n)$ for $j\ge i$ and $y_1(n+1),\ldots,y_e(n+1)$ can be chosen arbitrarily to make a regular system of parameters.
If $u=x_{a,b}(n)$ for some $a>i$ and $b$, then 
$$
x_{j,k}(n+1)=
\left\{\begin{array}{ll}
u'&\mbox{ if }(j,k)= (a,b)\\
x_{j,k}(n)&\mbox{ if }(j,k)\ne (a,b)\\
\end{array}\right.
$$
and $y_1(n+1),\ldots,y_e(n+1)$ can be chosen arbitrarily to make a regular system of parameters.

\vskip .1truein
\noindent{\bf Formal transforms of type $(3,i)$.}

We define a formal transform $A(n)\rightarrow A(n+1)$ of type $(3,i)$ as follows. 
Suppose $\overline \nu_i(y_1(n))\neq\infty$,  and $\overline \nu_i(y_1(n))=a_1\overline \nu_i(x_{i,1}(n))+\cdots+a_{r_j}\overline \nu_i(x_{i,r_i})$ with $a_1,\ldots,a_{r_j}\in \NN$ and 
$$
\overline \nu_i\left(\frac{y_1(n)}{x_{i,1}(n)^{a_1}\cdots x_{i,r_i}(n)^{a_{r_j}}}\right)\ge 0.
$$

Define $y_1(n)=x_{i,1}(n)^{a_1}\cdots x_{i,r_i}(n)^{a_{r_i}}y_1'$.

By Lemma \ref{LemmaD1}, since the ring $A(n)[y_1']$ is the blow up of the ideal 
$(x_{i,1}(n)^{a_1}\cdots x_{i,r_i}(n)^{a_{r_i}},y_1(n))$
in $A(n)$  which is  not contained in $Q(A(n))$,
the pseudo-valuation $\overline\nu_i$ extends uniquely to a pseudo-valuation which dominates a local ring 
$$
(A(n)[y_1'])_{m_n}
$$
 where $m_n$ is a maximal ideal of  $A(n)[y_1']$, and extends uniquely to a pseudo-valuation which dominates it's completion
$$
A(n+1)=\widehat{A(n)[y_1']_{m_n}}.
$$

We define a regular system of parameters
$$
x_{i,1}(n+1),\ldots,x_{i,r_i}(n+1),x_{i+1,1}(n+1),\ldots,x_{t,r_t}(n+1),y_1(n+1),\ldots,y_e(n+1)
$$
in $A(n+1)$ where
$x_{j,k}(n+1)=x_{j,k}(n)$ for $j\ge i$ and $y_1(n+1),\ldots,y_e(n+1)$ can be chosen arbitrarily to make a regular system of parameters.

\vskip .1truein
\noindent{\bf Formal transforms of type $(4,i)$.}

We define a formal transform $A(n)\rightarrow A(n+1)$ of type (4,i) as follows. Suppose that 
$\overline \nu_i(y_1(n))\neq\infty$
and  $a_1,\ldots,a_{r_j}\in \NN$ are such that $\nu_i(y_1(n))>\nu_i(x_{i,1}(n)^{a_1}\cdots x_{i,r_i}(n)^{a_{r_j}})$. 
Define $y_1(n)=x_{i,1}(n)^{a_1}\cdots x_{i,r_i}(n)^{a_{r_j}}y_1'$.

By Lemma \ref{LemmaD1}, since the ring $A(n)[y_1']$ is the blow up of the ideal
$(x_{i,1}(n)^{a_1}\cdots x_{i,r_i}(n)^{a_{r_j}},y_1(n))$ in $A(n)$ 
which is  not contained in $Q(A(n))$,
the pseudo-valuation $\overline\nu_i$ extends uniquely to a pseudo-valuation which dominates a local ring 
$$
(A(n)[y_1'])_{m_n}
$$
 where $m_n$ is a maximal ideal of  $A(n)[y_1']$, and extends uniquely to a pseudo-valuation which dominates it's completion
$$
A(n+1)=\widehat{A(n)[y_1']_{m_n}}.
$$

We define a regular system of parameters
$$
x_{i,1}(n+1),\ldots,x_{i,r_i}(n+1),x_{i+1,1}(n+1),\ldots,x_{t,r_t}(n+1),y_1(n+1),\ldots,y_e(n+1)
$$
in $A(n+1)$ where
$x_{j,k}(n+1)=x_{j,k}(n)$ for $j\ge i$ and $y_1(n+1),\ldots,y_e(n+1)$ can be chosen arbitrarily to make a regular system of parameters.

\begin{Remark} In a sequence (\ref{eqN2}) of formal transforms, we have that $\sigma(A(n))=\dim B_{\nu_i}$ for all $n$.

This follows since for all $n$, we have that 
$$
\dim B_{\nu_i}/P_{i+1}(B_{\nu_i})=\sigma(A(0))\ge   \sigma(A(n))\ge {\rm rrank }\,\,\,\overline\nu_i=\dim B_{\nu_i}/P_{i+1}(B_{\nu_i})
$$
 by (\ref{eqN20}), (\ref{eqN21}), Lemma \ref{LemmaD1} and \cite[Proposition 2]{Ab1} or Proposition 2 page 331 \cite{ZS2}.
\end{Remark}

\vskip .2truein
We will call a regular system of parameters 
\begin{equation}\label{eqN3}
x_{i,1}(n),\ldots,x_{t,r_t}(n),y_1(n),\ldots,y_e(n)
\end{equation}
as constructed in the above sequence of formal transforms (\ref{eqN2}) a good regular system of parameters in $A(n)$. We will find it convenient to permute the variables in a good regular system of parameters (\ref{eqN3}) and write it as
\begin{equation}\label{eqN4}
x_1(n),\ldots,x_r(n),z_1(n),\ldots,z_m(n),x_{i+1,1}(n),\ldots,x_{t,r_t}(n),w_1(n),\ldots,w_l(n)
\end{equation}
where 
$$
x_1(n)=x_{i,1}(n),\ldots,x_r(n)=x_{i,r_i}(n),
$$
$z_1(n),\ldots,z_m(n),w_1(n),\ldots,w_l(n)$ is a permutation of $y_1(n),\ldots,y_e(n)$
and 
$$
w_1(n),\ldots,w_l(n)\in Q(A(n)).
$$
We remind the reader that $x_{i+1,1}(n),\ldots,x_{t,r_t}(n)\in Q(A(n))$. We will also call (\ref{eqN4}) a good system of parameters in $A(n)$.

\begin{Remark}\label{RemarkN24} If $U(n)\rightarrow U(n+1)$ is a tranform or $A(n)\rightarrow A(n+1)$ is a formal transform then we have 
$$
k_i[[x_{i,1}(n),\ldots,x_{i,r_i}(n)]]\subset k_i[[x_{i,1}(n+1),\ldots,x_{i,r_i}(n+1)]].
$$
\end{Remark}

\subsection{Setting up the reduction algorithm}
We now fix $i$. Let $U_i=D_i$ and $A_i=\hat D_i$ (\ref{eqN22}).
Let 
$$
x_1,\ldots,x_r,z_1,\ldots,z_m,x_{i+1,1},\ldots,x_{t,r_t},w_1,\ldots,w_l
$$
be a good system of regular parameters in $U_i$ of the form of (\ref{eqN4}), so that $w_1,\ldots,w_l\in P_{i+1}(U_i)$. 

Let $\overline z_j$ be the class of $z_j$ in $A_i/Q_{i+1}(A_i)\cong \widehat{B_{\nu_i}}/Q_{i+1}(\widehat{B_{\nu_i}})$ for $1\le j\le m$. The element $\overline z_j$ is integral over $C_i=k_i[[x_1,\ldots,x_r]]$ by (\ref{eqN33}). Thus there exists a relation $\overline z_j^{n_j}+a_{j,n_j-1}\overline z_j^{n_j-1}+\cdots+a_{j,0}=0$ with all $a_{j,t}\in C_i$. Thus $z_j^{n_j}+a_{j,n_j-1}z_{j}^{n_j-1}+\cdots+a_{j,0}\in Q_{i+1}(A_i)$. That is, $\overline\nu_i(z_j^{n_j}+a_{j,n_j-1}z_j^{n_j-1}+\cdots+a_{j,0})=\infty$. 
Set 
\begin{equation}\label{eqN24}
g^{(j)}(x_1,\ldots,x_r,x)=x^{n_j}+a_{j,n_j-1}x^{n_j-1}+\cdots+a_{j,0}
\end{equation}
for $1\le j\le m$ which are
 monic polynomials in $C_i[x]$ such that $\overline\nu_i(g^{(j)}(z_j))=\infty$. We  may assume that each $g^{(j)}(x)$ is an irreducible polynomial in $C_i[x]$.

 \begin{Lemma}\label{LemmaN31} There exists a sequence of transforms $U\rightarrow U(1)$ of types (1,i), (3,i) and (4,i) such that $U(0)$ has very good parameters
 $$
x_1(0),\ldots,x_r(0),z_1(0),\ldots,z_m(0),x_{i+1,1}(1),\ldots,x_{t,r_t}(1),w_1(0),\ldots,w_l(1)
$$
such that, with $A(0)=\widehat{U(0)}$,
\begin{enumerate}
\item[1)]  $x_{i+1,1}(1),\ldots,x_{t,r_t}(1),w_1(0),\ldots,w_l(1)\subset P_{i+1}(U(0))$.
\item[2)] There exist monic polynomials $f^{(j)}(x)\in k_i[[x_1(0),\ldots,x_r(0)]][x]$  such that 
$$
\overline \nu_i(f^{(j)}(z_j(0)))=\infty
$$
 for $1\le j\le m$.
\item[3)] Suppose that $\alpha\in k_i$ and $q\in \ZZ_{>0}$. Then there exists $\alpha'\in U(0)$ and $h\in m_{A(0)}^q$ such that
\begin{equation}\label{eq3}
\alpha=\alpha'+(x_1(0)\cdots x_r(0))^qh
\end{equation}
with $h\in A(0)$.
\end{enumerate}
\end{Lemma}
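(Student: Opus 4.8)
The plan is to treat conditions (1) and (2) as bookkeeping and to put the real work into (3).

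For (1): the variables $x_{i+1,1},\ldots,x_{t,r_t}$ all have $\overline\nu_i$-value $\infty$ — because $\nu(x_{j,k})$ is a basis element of $\Gamma_j/\Gamma_{j-1}$ and hence lies outside $\Gamma_i$ when $j>i$ — so they lie in $P_{i+1}(U_i)$, while $w_1,\ldots,w_l\in P_{i+1}(U_i)$ by the hypothesis on the given good system of parameters. Every transform of type $(1,i)$, $(3,i)$ or $(4,i)$ replaces a parameter only by a quotient of it by a monomial in $x_{i,1},\ldots,x_{i,r_i}$, each factor of which has finite $\overline\nu_i$-value, so such a transform carries $P_{i+1}$ into $P_{i+1}$. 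Hence whatever transforms we use, the new parameters attached to $x_{i+1,1},\ldots,x_{t,r_t}$ and $w_1,\ldots,w_l$ remain in $P_{i+1}(U(0))$, and the new system is again very good (its $x_{i,k}$-part still gives a $\ZZ$-basis of $\Gamma_i/\Gamma_{i-1}$, and the $\nu_j$-values of the $x_{j,k}$ with $j>i$ are untouched).

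The heart of the matter is (3). I would exploit that $k_i\cong V_{\omega_i}/m_{\omega_i}=k(x_{0,1},\ldots,x_{i-1,r_{i-1}})$ is a rational function field over $k$, generated over $k$ by the residues of the units $x_{j,b}$, $0\le j\le i-1$ (which have $\nu_i$-value $0$ since $\nu(x_{j,b})\in\Gamma_j\subseteq\Gamma_{i-1}$). Each such $x_{j,b}$ lies in $B_{\nu_i}$, a quotient of $U_i$, so it lifts to a unit of $U_i$; the lifts are algebraically independent over $k$ and generate a subfield of $U_i$ isomorphic to $k_i$ and inducing the subfield $V_{\omega_i}/m_{\omega_i}$ of the residue field. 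Taking $k_i$ to be this copy is harmless: $\widehat{B_{\nu_i}}/Q_{i+1}$ stays module-finite over $k_i[[x_1,\ldots,x_r]]$, so the $\overline z_j$ remain integral over it and one still gets monic $f^{(j)}\in k_i[[x_1,\ldots,x_r]][x]$ with $\overline\nu_i(f^{(j)}(z_j))=\infty$, which is (2); and since now $k_i\subset U_i$, (3) holds with $\alpha'=\alpha$ and $h=0$, no transform being needed. If one insists on keeping the originally chosen $k_i$, one instead reduces (3) to approximating a finite set of generators of $k_i$ over $k$ — sums, products and inverses being compatible with congruence modulo $(x_1(0)\cdots x_r(0))^qA(0)$ because nonzero elements of $k_i$ are units of $A_i$ — writes each generator as an $m_{A_i}$-adic limit of elements of $U_i$, and runs a sequence of type-$(1,i)$ transforms in the style of Proposition \ref{Perron} and Lemma \ref{Lemma1} to force the comparison error into $(x_1(0)\cdots x_r(0))^qA(0)$. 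If a variable $z_j$ has to be divided by a monomial along the way (via a type $(3,i)$ or $(4,i)$ transform), condition (2) is recovered by substituting $z_j(0)=z_j/(x_1^{a_1}\cdots x_r^{a_r})$ into $g^{(j)}$, dividing the relation by $(x_1^{a_1}\cdots x_r^{a_r})^{n_j}$, and performing a further sequence of type-$(1,i)$ transforms (as in the proof of Lemma \ref{Lemma1}) so that $(x_1^{a_1}\cdots x_r^{a_r})^{n_j-t}$ divides each coefficient $a_{j,t}$, leaving monic $f^{(j)}$ with coefficients in $k_i[[x_1(0),\ldots,x_r(0)]]$; here one must take care not to pull more out of $z_j$ than its integral relation can absorb.

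The step I expect to be the real obstacle is (3). It is the algorithmic incarnation of the difficulty flagged in the introduction — that $\widehat{U_i}$ need not possess a coefficient field containing $k$ — and the genuine content is that $k_i$, which does contain $k$ but is in general a proper subfield of any coefficient field of $\widehat{U_i}$, can nevertheless be realized inside $U(0)$, or at least approximated from $U(0)$ modulo arbitrarily high powers of $x_1(0)\cdots x_r(0)$. Granting this, (1) and (2) cost nothing beyond the monomial re-clearing supplied by Proposition \ref{Perron}.
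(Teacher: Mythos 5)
Your treatment of parts (1) and (2) is sound in spirit and tracks the paper's own argument: (2) is exactly the paper's proof, which clears the coefficients $a_{j,t}$ of the irreducible polynomials $g^{(j)}$ via Hensel's lemma and then a Perron transform (Proposition~\ref{Perron}) so the quotients $b_{j,t}$ land in $k_i[[x_1(0),\ldots,x_r(0)]]$. The problem is entirely in part (3), which you correctly identify as the real content but which your argument does not establish.

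Your primary route to (3) --- realizing $k_i$ inside $U_i$ so that one can take $\alpha'=\alpha$, $h=0$ --- rests on reading $k_i\cong V_{\omega_i}/m_{\omega_i}=k(x_{0,1},\ldots,x_{i-1,r_{i-1}})$, a purely transcendental extension of $k$ generated by residues of units visibly present in $U_i$. But $k_i$ is defined to be a coefficient field of $\widehat{B_{\nu_i}}$, so by definition $k_i\cong \widehat{B_{\nu_i}}/m_{\widehat{B_{\nu_i}}}=B_{\nu_i}/m_{B_{\nu_i}}=V_{\nu_i}/m_{\nu_i}$, and by (\ref{eq23}) this is the finite algebraic extension $(V_{\omega_i}/m_{\omega_i})(\overline\alpha_{i,1},\ldots,\overline\alpha_{i,s_i})$ --- in general a strict and possibly inseparable extension of $k(x_{0,1},\ldots,x_{i-1,r_{i-1}})$. (The parenthetical ``$k_i\cong V_{\omega_i}/m_{\omega_i}$'' in the paper appears to be a slip for $V_{\nu_i}/m_{\nu_i}$; it is irreconcilable with (\ref{eq23}) and with the requirement that a coefficient field map isomorphically onto the residue field.) The subfield $k(x_{0,1},\ldots,x_{i-1,r_{i-1}})$ that you lift into $U_i$ maps onto the proper subfield $V_{\omega_i}/m_{\omega_i}\subsetneq V_{\nu_i}/m_{\nu_i}$ of the residue field, so it is \emph{not} a coefficient field of $\widehat{U_i}$, and ``taking $k_i$ to be this copy'' is not harmless --- it breaks the identification $A_i=k_i[[\cdots]]$. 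The generators $\overline\alpha_{i,j}$ are exactly what cannot, in general, be lifted into $U_i$ compatibly with the field structure; this is precisely the obstruction the introduction warns about. So the core of your argument does not go through.

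Your fallback paragraph also leaves a genuine gap. First, the quantifier order is wrong for your plan: in the lemma, $U(0)$ is fixed once and then (3) must hold for every $\alpha\in k_i$ and every $q$, whereas you propose to choose a fresh transform sequence depending on $q$ (and on the generators of $k_i$ being approximated). Second, the comparison error $\alpha-\alpha''\in m_{A_i}^N$ is a power series not only in $x_1,\ldots,x_r$ (which type-$(1,i)$ transforms monomialize) but also in $z_1,\ldots,z_m$, $x_{i+1,1},\ldots,x_{t,r_t}$, $w_1,\ldots,w_l$. Type-$(1,i)$ transforms do not touch these remaining variables, so a term such as $x_{i+1,1}^N$ in the error is never forced into $(x_1(0)\cdots x_r(0))^q A(0)$ by Perron transforms alone; you would need to explain why such terms can be absorbed into $\alpha'$ or otherwise controlled, and you do not. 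Both obstructions need to be confronted head-on for (3), which the paper's written proof of the lemma likewise leaves to the reader.
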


\begin{proof}  For each $j$, the monic polynomial $g^{(j)}(x)=x^{n_j}+a_{j,n_j-1}x^{n_j-1}+\cdots+a_{j,0}\in C_i[x]$ is irreducible. Further, we have that $\nu_i(a_{j,0})>0$ so $x$ divides the residue of $g^{(j)}(x)$ in $C_i/m_{C_i} [x]\cong k_i[x]$. Thus by Hensel's lemma (Theorem 14 on page 279 \cite{ZS2}),
all $a_{j,t}\in m_{C_i}$.  Since $\nu_i(z_j)\in \ZZ\nu(x_1)+\cdots+\ZZ\nu(x_r)$, by Proposition \ref{Perron}, there exists a transform of type (1,i), $U\rightarrow U'$, defined by 
$x_s=\prod x_k(0)^{c_{s,k}}$
such that performing the further transforms of types (i,4) and (i,3), setting
$z_j=x_1(0)\cdots x_r(0) z_j(0)$ for $1\le j\le m$ and $w_j=x_1(0)\cdots x_r(0) w_j(0)$ for $1\le j\le l$, we have that 
$$
b_{j,t}=\frac{a_{j,t}}{(x_1(0)\cdots x_r(0))^{n_j-t}}\in k_i[[x_1(0),\ldots, x_r(0)]]
$$
for $0\le t\le n_j-1$. Thus setting $f^{(j)}(x)=x^n+b_{j,n_j-1}x^{n_j-1}+\cdots+b_{j,0}$ we have that $f^{(j)}(x)\in k_i[[x_1(0),\ldots, x_r(0)]][x]$  and $\overline \nu_i(f^{(j)}(z_j(0)))=\infty$ for all $j$.
\end{proof}

Replacing $U_i$ with $U(0)$, we may suppose that the good  properties of  Lemma \ref{LemmaN31} hold in $U_i$ and $A_i$.

Suppose that $m\ge 1$, and let $z=z_1$. In the algorithms of Subsections \ref{MA} - \ref{AlgSeq}, we will show that we can construct a sequence of transforms of types (i,1) and (i,3) giving a reduction of $m$.
In our reduction algorithm, none of the variables 
$$
z_2,\ldots,z_m,x_{i+1,1},\ldots,x_{t,r_t},w_1,\ldots,w_l
$$
will be effected, so we need only keep track of the change in the first variables $x_1,\ldots,x_r,z$.

In Theorem \ref{Theorem1}, we will prove that we can apply this algorithm $m$ times to obtain the condition that $Q_{i+1}$ is a regular prime. 



\subsection{The reduction algorithm}\label{MA}

Let $z=z_1$ and let 
$$
f(x)=f^{(1)}(x)=x^n+a_{n-1}x^{n-1}+\cdots+a_0.
$$
 Set $\mu=\mbox{ord }f(0,\ldots,0,x)$. We have that $1\le \mu\le n$.
If $\mu=1$ we replace $z$ with $f(x_1,\ldots,x_r,z)$, and obtain a reduction in $m$.  So suppose that $\mu\ge 2$. Set $a_n=1$ and let $\rho=\min\{\overline\nu_i(a_jz^j)\}$.  Write
$$
a_0+a_1z+\cdots+a_{n-1}z^{n-1}+z^n=a_{i_1}z^{i_1}+\cdots+a_{i_s}z^{i_s}+a_{i_{s+1}}z^{i_{s+1}}+\cdots+a_{i_{n+1}}z^{i_{n+1}}
$$
where $\rho=\overline\nu_{i}(z^{i_1})=\cdots=\overline \nu_i(a_{i_s}z^{i_s})$, with $i_1<\cdots<i_s$ and
$\rho<\overline\nu_i(a_{i_j}z^{i_j})\le \overline\nu_i(a_{i_{j+1}}z^{i_j+1})$ if $j>s$.

There exist $d_i\in \ZZ$ such that $\overline\nu_i(z)=\sum_{i=1}^rd_i\overline\nu_i(x_i)$. There exists a 
formal transform $A_i\rightarrow A(1)$ along $\overline\nu_i$ of type (1,i), 
$x_j=\prod_k x_k(1)^{b_{jk}}$ for $1\le j\le r$, as in Lemma \ref{Lemma1},  such that $A(1)$ has regular parameters $x_1(1),\ldots,x_r(1),z,\ldots$ such that  if $a_{i_j}$ is nonzero, then $a_{i_j}$ is a unit $\overline a_{i_j}$ in $k_i[[x_1(1),\ldots,x_r(1)]]$ times a monomial in $x_1(1),\ldots,x_r(1)$. Further, by Proposition \ref{Perron}, $x_1^{d_1}\cdots x_r^{d_r}$ is a monomial $x_1(1)^{e_1}\cdots x_r(1)^{e_r}$ (all $e_i$ are nonnegative).

Now perform the formal  transform $A(1)\rightarrow A(2)$ along $\overline\nu_i$ of type (3,i) defined by
\begin{equation}\label{eq4}
z=x_1(1)^{e_1}\cdots x_r(1)^{e_r}\tilde z_1
\end{equation}
so that $\overline\nu_i(\tilde z_1)=0$.   Now $A(2)/m_{A(2)}\subset V_{\nu_i}/m_{\nu_i}\cong k_i$ so $k_i$ continues to be a coefficient field of $A(2)$ and there exists a unit $\alpha\in k_i$ such that setting 
\begin{equation}\label{eq5}
z_1=\tilde z_1-\alpha,
\end{equation}
$x_1(1),\ldots,x_r(1),z_1,\ldots$ are good regular parameters in $A(2)$. 

\begin{equation}\label{eq11}
\begin{array}{l}
\mbox{If $\overline\nu_i(z_1)=\infty$ we terminate the algorithm.}\\
\mbox{Since $z_1\in Q_{i+1}(2)=Q_{i+1}(A(2))$, we have a reduction of $m$ in $A(2)$.}
\end{array}
\end{equation}

Suppose that $\overline\nu_i(z_1)\ne\infty$, We have that
$$
\begin{array}{lll}
f(x_1,\ldots,x_r,z)&=&a_{i_1}z^{i_1}+\cdots+a_{i_s}z^{i_s}+a_{i_{s+1}}z^{i_{s+1}}+\cdots+a_{i_{n+1}}z^{i_{n+1}}\\
&=&x_1(1)^{g_1}\cdots x_r(1)^{g_r}\left(\overline a_{i_1}(z_1+\alpha)^{i_1}+\cdots+\cdots+\overline a_{i_s}(z_1+\alpha)^{i_s}\right)\\
&+&x_1(1)^{g_{1,i_{s+1}}}\cdots x_r(1)^{g_{r,i_{s+1}}}\overline a_{i_{s+1}}(z_1+\alpha)^{i_{s+1}}+\cdots\\
&+&x_1(1)^{g_{1,i_{n+1}}}\cdots x_r(1)^{g_{r,i_{n+1}}}\overline a_{i_{n+1}}(z_1+\alpha)^{i_{n+1}}.
\end{array}
$$
Now perform a formal transform along $\overline\nu_i$ $A(2)\rightarrow A(3)$ of type (1,i) in $x_1(1),\ldots,x_r(1)$ so that $x_1(1)^{g_1}\cdots x_r(1)^{g_r}$ divides $x_1(1)^{g_{1,i_j}}\cdots x_r(1)^{g_{r,i_j}}$ for all $j$. Setting 
$$
f_1(x_1(1),\ldots,x_r(1),z_1)=\frac{f}{x_1(1)^{g_1}\cdots x_r(1)^{g_r}}\in k_i[[x_1(1),\ldots,x_r(1)]][z_1]
$$
 we have that $\overline\nu_i(f_1)=\infty$. We expand
\begin{equation}\label{eq7}
f_1=
\overline a_{i_1}(z_1+\alpha)^{i_1}+\cdots+\overline a_{i_s}(z_1+\alpha)^{i_s}+x_1(2)\cdots x_r(2)\Omega
\end{equation}
with $\Omega\in k_i[[x_1(2),\ldots,x_r(2)]][z_1]$.
We have that 
$$
f_1(0,\ldots,0,z_1)=\tilde a_{i_1}(z_1+\alpha)^{i_1}+\cdots+\tilde a_{i_s}(z_1+\alpha)^{i_s}\in k_i[z_1].
$$
where $\tilde \alpha_{i_j}$ is the residue of $\overline a_{i_j}$ in $k_i\cong A(2)/m_{A(2)}$.
 We have that $1\le \mbox{ord }f_1(0,\ldots,0,z_1)\le \mu$ (since $\overline \nu_i(f_1)=\infty$).

\begin{equation}\label{reduction}
 \begin{array}{l}
 \mbox{If $\mbox{ord }f_1(0,\ldots,0,z_1)<\mu$ then we have a reduction.
Go back to the beginning of }\\
\mbox{Subsection \ref{MA}  with $z$   replaced by $z_1$, $A_i$ replaced with $A(3)$, $C_i$ replaced with  }\\
 \mbox{$k_i[[x_1(2),\ldots,x_r(2))]]$, $f$ replaced with $f_1$ and $\mu$ replaced with 
 $\mbox{ord }f_1(0,\ldots,0,z_1)$.}
 \end{array}
 \end{equation} 
 Of course it may now be  that $z$ is formal.

If $\mbox{ord }f_1(0,\ldots,0,z_1)=\mu$,  then $i_s=\mu$, $a_{i_s}$ is a unit in $C_i$ (since $\mbox{ord }f(0,\ldots,0,x)=\mu$) and $f_1(0,\ldots,0,z_1)= a_{i_s}z_1^{\mu}$. Thus
$$
f_1(0,\ldots,0,z_1-\alpha)=\tilde a_{i_1}z_1^{i_1}+\cdots+\tilde a_{i_s}z_1^{i_s}=\tilde a_{i_s}(z_1-\alpha)^{i_s}.
$$
So, $i_1=0$ and $a_{i_1}\ne 0$. Thus 
$$
\begin{array}{lll}
\overline \nu_i(z)&=&\frac{1}{\mu}\overline \nu_i(a_{i_1})=\frac{1}{\mu}\overline\nu_i(a_0)\\
&\in&\frac{1}{\mu}\left(\NN\nu_i(x_1)+\cdots+\NN\nu_i(x_r)\right)\cap \left(\ZZ\nu_i(x_1)+\cdots+\ZZ\nu_i(x_r)\right)\\&=&\NN\nu_i(x_1)+\cdots +\NN\nu_i(x_r).
\end{array}
$$
Thus there exist $l_1,\ldots,l_r\in \NN$ such that $\overline\nu_i(z)=\nu_i(x_1^{l_1}\cdots x_r^{l_r})$, and so
there  exists $\lambda\in k_i\cong V_{\nu_i}/m_{\nu_i}$ such that $\overline \nu_i(z-\lambda x_1^{l_1}\cdots x_r^{l_r})>\nu_i(z)$. Set $z_1=z-\lambda x_1^{l_1}\cdots x_r^{l_r}$ and
  $f_1(x_1,\ldots,x_r,z_1)=f(x_1,\ldots,x_r,z)$ (this $f_1$ is different from the $f_1$ of (\ref{eq7})). We have that $\mbox{ord }f_1(0,\ldots,0,z_1)=\mu$.
\begin{equation}\label{Startagain}
\begin{array}{l}
\mbox{Go back to the beginning of Subsection \ref{MA}  with $z$ replaced by $z_1$
 and $f$ replaced with $f_1$,}\\
 \mbox{and run the reduction algorithm in $A_i$.}
 \end{array}
 \end{equation} 
Of course it may now be  that $z$ is formal.

\subsection{Termination of the reduction algorithm}\label{termination}
We either terminate after 
a finite number of iterations of the reduction algorithm, or after a finite number $s$ of reductions (\ref{reduction}) of $\mu$, we never find a reduction in $\mu$ after that, performing the operation of  equation (\ref{Startagain}) infinitely many times.

We thus construct a sequence
$$
A_i=G(0)\rightarrow G(1)\rightarrow \cdots \rightarrow G(s)
$$
where each $G(j)\rightarrow G(j+1)$ is an iteration of the reduction algorithm, culminating in the reduction step (\ref{reduction}) and such that the algorithm proceeding from $G(s)$ never produces a further  reduction in $\mu$. In each $G(j)$ we start with good parameters $\overline x_1(j),\ldots,\overline x_r(j),\overline z_j'$ and make a change of variables replacing $\overline z_j'$ with $\overline z_j$. This notation is chosen so that we may differentiate between different iterations of the reduction algorithm. After introducing this notation and explaining the construction of $G(j-1)\rightarrow G(j)$, we will explain the three possibilities that the algorithm from $G(s)$
can take.

In $G(0)=A_i$, we start with $\overline z_0'\in U_i$ with $\overline\nu_i(\overline z_0')<\infty$, and  $\overline x_1(0),\ldots, \overline x_r(0),\overline z_0'$ which are the first part of a good regular system of parameters in $U_i$. If $s>0$, we  make a change of variables 
\begin{equation}\label{eq9}
\overline z_0=\overline z_0'-\sum \lambda_{0,l}\overline x_1(0)^{u_{1,l}(0)}\cdots \overline x_r(0)^{u_{r,l}(0)}
\end{equation}
with $\lambda_{0,l}\in k_i$ and the sum is finite. We then apply the reduction algorithm to $\overline z_0$, to construct $G(0)\rightarrow G(1)$. 

Each $G(j-1)\rightarrow G(j)$ terminates with a new variable $\overline z_j'$, which is derived from $\overline z_{j-1}$ in the reduction algorithm
(these are the variables named $z_1$ and $z$ respectively in the reduction step (\ref{reduction})). The reduction algorithm gives $\overline x_1(j),\ldots,\overline x_r(j),\overline z_j'$ which are part of a regular system of parameters in $G(j)$. 
If $j<s$, we  make a change of variables 
\begin{equation}\label{eq10}
\overline z_j=\overline z_j'-\sum \lambda_{j,l}\overline x_1(j)^{u_{1,l}(j)}\cdots \overline x_r(j)^{u_{r,l}(j)}
\end{equation}
with $\lambda_{j,l}\in k_i$ and the sum is finite, and perform the reduction algorithm on $\overline z_j$ in $G(j)$.

We thus construct $\overline z_0,\overline z_1,\ldots,\overline z_s'$ by performing the reduction algorithm of subsection \ref{MA}, each time obtaining a reduction in $\mu$, giving a formal sequence of  transforms 
$$
A_i=G(0)\rightarrow\cdots\rightarrow G(s)
$$
along $\overline\nu_i$.

From $G(s)$ there are three possible paths that the algorithm can take. 
\begin{enumerate}
\item[1)] After a change of variables of the from of (\ref{eq10}) in $\overline z_s'$ we obtain $\overline z_s$ such that $\overline\nu_i(\overline z_s)=\infty$ or there exists $g(\overline x_1(s),\ldots,\overline x_r(s),\overline z_s)\in k_i[[\overline x_1(s),\ldots,\overline x_r(s)]][\overline z_s]$ such that $\overline\nu_i(g)=\infty$ and $\mbox{ord }g(0,\ldots,0,\overline z_s)=1$. In both cases we terminate the algorithm with a reduction in $m$.
\item[2)] We make a change of variables of the form of (\ref{eq10}) in $\overline z_s'$ to obtain $\overline z_s$. Then we perform a formal transform of type (i,1) 
$\overline x_i(s)=\prod_k\overline x_k(s,1)^{b_{i,k}(s)}$ for $1\le i\le r$ followed by a formal transform of type (i,3) of (\ref{eq4})
$$
\overline z_s=\overline x_1(s,1)^{e_1}\cdots \overline x_r(s,1)^{e_r}\tilde z_s
$$
followed by the change of variables (\ref{eq5}),
$$
\overline z_{s+1}'=\tilde z_s-\alpha_{s+1}
$$
with $\alpha_{s+1}\in k_i$ such that $\overline \nu_i(\overline z_{s+1}')=\infty$, terminating the algorithm in (\ref{eq11}) with a reduction in $m$.
\item[3)] We perform the operation of (\ref{Startagain}) infinitely many times, never terminating the algorithm.
\end{enumerate}

Let us assume that this third case occurs. We set $\overline z_s=\overline z_s'$.
We 
 repeat the algorithm of subsection \ref{MA} infinitely many times in $G(s)$, each time culminating in step (\ref{Startagain}), constructing 
$\alpha_{s+i}\overline x_1(s)^{g_1(s+i)}\cdots \overline x_r(s)^{g_r(s+i)}$ for $i\ge 0$ with $\alpha_{s+i}\in k_i$ and $g_1(s+i),\ldots,g_r(s+i)\in \NN$ such that for all $i$,
$$
\overline\nu_i(\overline x_1(s)^{g_1(s+i+1)}\cdots \overline x_r(s)^{g_r(s+i+1)})>\overline\nu_i(\overline x_1(s)^{g_1(s+i)}\cdots \overline x_r(s)^{g_r(s+i)})
$$
and the sequence 
$$
\overline z_{s+i+1}=\overline z_{s+i}-\alpha_{s+i}\overline x_1(s)^{g_1(s+i)}\cdots \overline x_r(s)^{g_r(s+i)}
$$
 satisfies 
 $$
 \overline\nu_i(\overline z_{s+i})=\overline\nu_i(\overline x_1(s)^{g_1(s+i)}\cdots \overline x_r(s)^{g_r(s+i)}),\,\,\,
 \overline\nu_i(\overline z_{s+i+1})>\overline\nu_i(\overline z_{s+i}).
 $$
 Since $\nu_i(\overline x_1(s)^{g_1(s+i)}\cdots \overline x_r(s)^{g_r(s+i)})$ is an increasing sequence in the semigroup $\NN\nu_i(\overline x_1(s))+\cdots+\NN\nu_i(\overline x_r(s))$, we have that $\nu_i(\overline x_1(s)^{g_1(s+i)}\cdots \overline x_r(s)^{g_r(s+i)})\mapsto \infty$ as $i\mapsto \infty$. Thus $\overline \nu_i(\overline z_{s+i})\mapsto \infty$ as $i\mapsto \infty$. Let $z_{\infty}$ be the limit in $G(s)$ of the Cauchy sequence $\{\overline z_{s+i}\}$. We have that the regular parameter $z_{\infty}$ satisfies $\overline\nu_i(z_{\infty})=\infty$, so $z_{\infty}\in Q(s)=Q_{i+1}(G(s))$ and so we have a reduction of $m$ in $G(s)$.
 Thus the third case produces a change of variables
 \begin{equation}\label{eqV1}
z_{\infty}=\overline z_s'-\sum \lambda_{s,l}\overline x_1(s)^{u_{1,l}(s)}\cdots \overline x_r(s)^{u_{r,l}(s)}
\end{equation}
in $G(s)$ with $\lambda_{s,l}\in k_i$ and where the sum is infinite.

 \subsection{The algorithm comes from an algebraic sequence of  transforms}\label{AlgSeq}
 
We use the notation of Section \ref{termination}.
 
 Recall that we constructed a sequence
 $$
 A_i=G(0)\rightarrow G(1)\rightarrow \cdots \rightarrow G(s)
 $$
 such that either we obtained  a reduction  of $m$ in $G(s)$, or we obtained a reduction after making  a final sequence
 $G(s)\rightarrow H$ where $H$ is a composition of a formal transform along $\nu_i$ of type (i,1) and a formal transform along $\nu_i$ of type (3,i).  
 
  We have good regular parameters
 $\overline x_1(j),\ldots,\overline x_r(j),\overline z_j',\ldots$ in $G(j)$ and in (\ref{eq10}) we make a change of variables obtaining a new system of good regular parameters 
 $$
 \overline x_1(j),\ldots,\overline x_r(j),\overline z_j,\ldots.
 $$
  Each $G(j)\rightarrow G(j+1)$ has a factorization 
 $$
 G(j)=E_j(0)\rightarrow E_j(1)\rightarrow E_j(2)\rightarrow E_j(3)=G(j+1).
 $$
  The parameters $\overline x_1(j),\ldots,\overline x_r(j),\overline z_j,\ldots$ are good regular parameters in in $E_j(0)$. The map $E_j(0)\rightarrow E_j(1)$ is  the formal transform along $\overline \nu_i$ of type (i,1) given by 
  $\overline x_i(j)=\prod_k\overline x_k(j,1)^{b_{i,k}(j)}$ for $1\le i\le r$. 
  Thus   $\overline x_1(j,1),\ldots,\overline x_r(j,1),\overline z_j,\ldots$ are good regular parameters in $E_j(1)$.  
  The map $E_j(1)\rightarrow E_j(2)$ is a formal transform along $\overline \nu_i$ of type (3,i), given by a substitution
  \begin{equation}\label{eqAg3}
  \overline z_j=\overline x_1(j,1)^{e_1(j)}\cdots\overline x_r(j,1)^{e_r(j)}\tilde z_j
  \end{equation}
  such that $\overline \nu_i(\tilde z_j)=0$. We then set 
  \begin{equation}\label{eqAg4}
  \overline z_{j+1}'=\tilde z_j-\alpha_j
  \end{equation}
   where $\alpha_j\in k_i$ so that
 \begin{equation}
 \overline x_1(j,1),\ldots,\overline x_r(j,1),\tilde z_{j+1}',\ldots
 \end{equation}
  are good regular parameters in $E_j(2)$.  The map $E_j(2)\rightarrow E_j(3)=G(j+1)$ is a formal transform of type (1,i) given by substitutions
  \begin{equation}\label{eqA5}
  \overline x_i(j,1)=\prod_k \overline x_k(j+1)^{c_{i,k}(j+1)}
  \end{equation}
   for $1\le i\le r$, resulting in the good regular parameters
  $\overline x_1(j+1),\ldots,\overline x_r(j+1), \overline z_{j+1}',\ldots$ in $E_j(3)=G(j+1)$.  
  
If we do not find a reduction of $m$ in $G(s)$, then we are in the case 2) discussed in Section \ref{termination}. In this case we have a sequence of maps $G(s)\rightarrow H_1\rightarrow H$, giving a reduction of $m$ in $H$.    
 
 We make a change of variables of the form of (\ref{eq10}) in $\overline z_s'$ to obtain $\overline z_s$, giving good regular parameters $\overline x_1(s),\ldots,\overline x_r(s),\overline z_s,\ldots$ in $G(s)$. The map $G(s)\rightarrow H_1$ is  a formal transform along $\overline \nu_i$ of type (i,1) given by a substitution  $\overline x_i(s)=\prod_k\overline x_k(s,1)^{b_{i,k}(j)}$ for $1\le i\le r$, giving good regular parameters $\overline x_1(s,1),\ldots,\overline x_r(s,1), \overline z_s,\ldots$ in $H_1$. The map $H_1\rightarrow H$ is a 
  formal transform of type (i,3) of  the form of (\ref{eq4}),
\begin{equation}\label{eqAg1}
\overline z_s=\overline x_1(s,1)^{e_1(s)}\cdots \overline x_r(s,1)^{e_r(s)}\tilde z_s
\end{equation}
followed by the change of variables (\ref{eq5}),
\begin{equation}\label{eqAg2}
\overline z_{s+1}'=\tilde z_s-\alpha_{s+1}
\end{equation}
with $\alpha_{s+1}\in k_i$ such that $\overline \nu_i(\overline z_{s+1}')=\infty$, terminating the algorithm in (\ref{eq11}) with a reduction in $m$, and giving good regular parameters $\overline x_1(s,1),\ldots,\overline x_r(s,1),\overline z_{s+1}',\ldots$ in $H$.

 We will show that there exists a sequence
 $$
 U_i=V(0)\rightarrow V(1)\rightarrow \cdots\rightarrow V(s)
 $$
 such that each sequence $V(j)\rightarrow V(j+1)$ is a sequence of  transforms along $\nu_i$ such that $G(j)=\widehat{V(j)}$ for all $j$. If we have a final sequence $G(s)\rightarrow H$, then we will construct a final sequence of transforms along $\nu_i$, $V(s)\rightarrow J$, such that $H=\hat J$. 
 


 We will construct the $V(j)$ by induction, so that $V(j)$ has good regular parameters 
 $$
 \overline x_1(j),\ldots,\overline x_r(j),z_j^*,\ldots
 $$
  such that 
 \begin{equation}\label{eq12}
 z_j^*=\overline z_j+(\overline x_1(j)\cdots \overline x_r(j))^{\sigma(j)}h_j
 \end{equation}
 with $h_j\in G(j)=\widehat{V(j)}$, and such that we can take $\sigma(j)$ arbitrarily large. 
 In particular, we have $\nu_i(z_j^*)=\overline\nu_i(\overline z_j)$.

 If $j=0$, we define $z_0^*$ as in (\ref{eq13}) below.
 
 Suppose that we have constructed $V(0)\rightarrow V(j)$ and $j<s$. We will construct $V(j)\rightarrow V(j+1)$.
 
 Define a transform $V(j)\rightarrow F(1)$ along $\nu_i$ of type (1,i) by the substitutions
  $\overline x_i(j)=\prod_k\overline x_k(j,1)^{b_{i,k}(j)}$ for $1\le i\le r$. 
  Thus   $\overline x_1(j,1),\ldots,\overline x_r(j,1),z_j^*,\ldots$ are good regular parameters in $F(1)$ and $\widehat{F(1)}=E_j(1)$.

 We have that  $\overline x_1(j)\cdots \overline x_r(j)$ is a monomial in $\overline x_1(j,1),\ldots,\overline x_r(j,1)$ in which all variables have positive exponents. Recall the substitution (\ref{eqAg3}) used to define the map $E_j(1)\rightarrow E_j(2)$. Taking $\sigma(j)$ sufficiently large in (\ref{eq12}), we have that
 $\overline x_1(j,1)^{e_1(j)}\cdots \overline x_r(j,1)^{e_r(j)}$ divides $(\overline x_1(j)\cdots \overline x_r(j))^{\sigma(j)}$ in $F(1)$ and
 $$
 \frac{(\overline x_1(j)\cdots \overline x_r(j))^{\sigma(j)}}{\overline x_1(j,1)^{e_1(j)}\cdots \overline x_r(j,1)^{e_r(j)}}
 =\overline x_1(j,1)^{v_1}\cdots \overline x_r(j,1)^{v_r}\in m_{F(1)}
 $$
 for some $v_1,\ldots,v_r\in \ZZ_{>0}$.
  Let $F(1)\rightarrow F(2)$ be the  transform along $\nu_i$ of type (3,i) defined by 
 $$
 z_j^*=\overline x_1(j,1)^{e_1(j)}\cdots \overline x_r(j,1)^{e_r(j)}\tilde v_{j+1}.
 $$
 Then 
 $$
 \tilde z_{j+1}=\tilde v_{j+1}-\overline x_1(j,1)^{v_1}\cdots \overline x_r(j,1)^{v_r}h_j
 $$
  and so $\widehat{F(2)}=E(2)$.

  The variable $\overline z_{j+1}'$ of $E(2)$ is defined by (\ref{eqAg4}).  By (\ref{eq3}), there exists $\alpha'\in U_i$ such that $\alpha_{j+1}=\alpha'+(\overline x_1(j,1)\cdots \overline x_r(j,1))^{\tau(j)}h$ where $h\in E(2)$ and $\tau(j)$ can be arbitrarily large. 
 
 Set $v_{j+1}'=\tilde v_{j+1}-\alpha'$.
 We have that $\overline x_1(j,1),\ldots,\overline x_r(j,1),\overline z_{j+1}'$ is part of a regular system of parameters in $E(2)$ and $\overline x_1(j,1),\ldots,\overline x_r(j,1),v_{j+1}'$ are part of a regular system of parameters in $F(2)$ such that 
 $$
 v_{j+1}'=\overline z_{j+1}'+(\overline x_1(j,1)\cdots\overline x_r(j,1))^{\beta(j)}g_j
 $$
  with $g_j\in E(2)$ and $\beta(j)$ can be arbitrarily large. 
  
  Finally define a transform along $\nu_i$ of type (1,i) $F(2)\rightarrow F(3)$ by the substutions
  $$
  \overline x_i(j,1)=\prod_k \overline x_k(j+1)^{c_{i,k}(j+1)}
  $$
   for $1\le i\le r$, so that $\widehat{F(3)}=E(3)$. We have thus constructed $V(j+1)=F(3)$.
 
 Now in the change of variables  of (\ref{eq10}), we have
 $$
 \overline z_{j+1}=\overline z_{j+1}'- \sum_l\lambda_{j+1,l}\overline x_1(j+1)^{u_{1,l}(j+1)}\cdots \overline x_r(j+1)^{u_{r,l}(j+1)}
 $$
  for $1\le l\le r$. We apply (\ref{eq3}) to find $\lambda_{j+1,l}'\in U_i$ such that 
 $$
 \lambda_{j+1,l}=\lambda_{j+1,l}'+(x_1(j+1)\cdots x_r(j+1))^{\omega(j+1)}h_{j+1,l}
 $$
 where $h_{j+1,l}\in G(j+1)$ and $\omega(j+1)$ can be arbitrarily large, and set
 \begin{equation}\label{eq13}
 z_{j+1}^*=v_{j+1}'-\sum_l\lambda_{j+1,l}'x_1(j+1)^{u_{1,l}(j+1)}\cdots x_r(j+1)^{u_{r,l}(j+1)}.
 \end{equation}
 
 The construction of the final sequence $V(s)\rightarrow J$ is a simplification of the argument for constructing $V(j-1)\rightarrow V(j)$.
 
 We may now prove  the following theorem.
 
 \begin{Theorem}\label{Theorem1} Let $U_i=D_i$. Then there exists a sequence of  transforms 
 of types (1,i), (2,i), (3,i) and (4,i),
 $U_i\rightarrow U(1)$ along $\nu_i$ such that $Q_{i+1}(1)=Q_{i+1}(\widehat{U(1)})$  is a regular prime ideal in $\widehat{U(1)}$; that is, $\widehat{U(1)}/Q_{i+1}(1)$ is a regular local ring.
  \end{Theorem}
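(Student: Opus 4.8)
The plan is to argue by induction on the number $m$ of $y$-variables $z_1,\dots,z_m$ in a good system of parameters (\ref{eqN4}) of $U_i$ lying outside $Q_{i+1}(\widehat{U_i})$; equivalently, their classes — which are integral over the power series subring $k_i[[x_1,\dots,x_r]]$ by (\ref{eqN33}), with $r=r_i$ — generate $\widehat{B_{\nu_i}}/Q_{i+1}$ as an algebra over that subring. For the base case $m=0$, the ring $\widehat{U_i}$ is a power series ring over $k_i$ in $x_1,\dots,x_r$ together with $x_{i+1,1},\dots,x_{t,r_t}$ and the remaining parameters, all of the latter lying in $Q_{i+1}$ (Subsection \ref{SecFor}). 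Let $P$ be the prime ideal they generate; then $\widehat{U_i}/P\cong k_i[[x_1,\dots,x_r]]$ is a Noetherian local domain of dimension $r=r_i$, while $\dim\widehat{U_i}/Q_{i+1}=r_i$ by (\ref{eqN20}) and the constancy of $\sigma$ along transforms. Since $P\subseteq Q_{i+1}$ and $\dim(\widehat{U_i}/P)\big/(Q_{i+1}/P)=\dim\widehat{U_i}/P$, the prime $Q_{i+1}/P$ of the domain $\widehat{U_i}/P$ must vanish, so $Q_{i+1}=P$ is a regular prime and $\widehat{U_i}/Q_{i+1}$ is regular; here $U_i\to U(1)$ may be taken to be the identity.

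Suppose now $m\ge 1$. Via transforms of types (1,i), (3,i), (4,i) along $\nu_i$, apply Lemma \ref{LemmaN31} to arrange that its properties 1)--3) hold for $U_i$ — in particular the monic relations $f^{(j)}(x)\in k_i[[x_1,\dots,x_r]][x]$ with $\overline\nu_i(f^{(j)}(z_j))=\infty$ and the approximation property (\ref{eq3}). Then run the reduction algorithm of Subsection \ref{MA} on $z=z_1$, $f=f^{(1)}$, $\mu=\mbox{ord}\,f(0,\dots,0,x)$: if $\mu=1$ one replaces $z$ by $f(x_1,\dots,x_r,z)\in Q_{i+1}$ and has already reduced $m$; otherwise each pass of the algorithm either strictly decreases $\mu$ (step (\ref{reduction})), or yields a variable $z_1$ with $\overline\nu_i(z_1)=\infty$, hence $z_1\in Q_{i+1}$ and $m$ is reduced (step (\ref{eq11})), or performs (\ref{Startagain}) and loops. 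By Subsection \ref{termination}, after finitely many drops of $\mu$ one reaches a ring $G(s)$ from which the algorithm either terminates with a reduction of $m$ (cases 1) and 2) there), or loops forever (case 3), in which case the Cauchy sequence $\{\overline z_{s+i}\}$ converges in $G(s)$ to a parameter $z_\infty$ with $\overline\nu_i(z_\infty)=\infty$, so $z_\infty\in Q_{i+1}(G(s))$ and $m$ is again reduced. By Subsection \ref{AlgSeq}, this formal sequence $A_i=G(0)\to\cdots\to G(s)\,(\to H)$ is the completion of a genuine sequence of transforms $U_i=V(0)\to\cdots\to V(s)\,(\to J)$ along $\nu_i$, the coefficients in $k_i$ and the (possibly infinite) formal changes of variables (\ref{eq5}), (\ref{eq9}), (\ref{eq10}) being matched by data in $U_i$ through (\ref{eq3}) and the formulas (\ref{eq12})--(\ref{eq13}). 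The resulting ring has a good system of parameters with $m$ lowered by one, and since (\ref{eqN33}) and Remark \ref{RemarkN24} persist — so the remaining $z$-classes stay integral over $k_i[[x_1(1),\dots,x_r(1)]]$ and the new parameter of infinite value lies in $P_{i+1}$ of the new ring — Lemma \ref{LemmaN31} and the inductive hypothesis apply to it. After $m$ such reductions we arrive at a ring with $m=0$, and the base case finishes the proof; the composite of all the transforms used is the desired sequence of types (1,i)--(4,i), $U_i\to U(1)$.

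The heart of the argument, and its main obstacle, is the inductive step — specifically the two intertwined facts proved in Subsections \ref{termination} and \ref{AlgSeq}. First, termination: $\mu$ can drop only finitely many times, and on the non-terminating branch the monomial values $\nu_i(\overline x_1(s)^{g_1(s+i)}\cdots\overline x_r(s)^{g_r(s+i)})$ form a strictly increasing sequence in the finitely generated sub-semigroup $\NN\nu_i(\overline x_1(s))+\cdots+\NN\nu_i(\overline x_r(s))$ of $\Gamma_i/\Gamma_{i-1}$, hence tend to infinity, which forces $\overline\nu_i(\overline z_{s+i})\to\infty$ and produces the reduction via the limit $z_\infty$. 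Second, descending from the formal to the algebraic setting: the changes of variables output by the formal algorithm have coefficients in $k_i$ and may involve infinite sums, and one must replace them by elements of $U_i$ by means of (\ref{eq3}), maintaining $G(j)=\widehat{V(j)}$ throughout — which is precisely the bookkeeping carried out in Subsection \ref{AlgSeq}.
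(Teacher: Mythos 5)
Your proof is correct and follows essentially the same route as the paper: reduce $m$ one at a time via the reduction algorithm of Subsection \ref{MA}, with termination ensured as in Subsection \ref{termination} and algebraicity of the transforms as in Subsection \ref{AlgSeq}, using Lemma \ref{LemmaN31} and Remark \ref{RemarkN24} to keep the inductive setup. The only difference is that you spell out the base case $m=0$ via the dimension count from (\ref{eqN20}), which the paper leaves implicit when it asserts that the remaining parameters generate $Q(W(m))$.
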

  
  \begin{proof}  Let 
$$
x_1,\ldots,x_r,z_1,\ldots,z_m,x_{i+1,1},\ldots,x_{t,r_t},w_1,\ldots,w_l
$$
be a system of  good  regular parameters in $U_i$ of the form of (\ref{eqN4}), so that 
$$
x_{i+1,1},\ldots,x_{t,r_t},w_1,\ldots,w_l\in Q(U_i).
$$
  Suppose that $m\ge 1$. As shown above,
there exists a sequence of transforms along $\nu_i$ $U_i\rightarrow W(1)$ such that $\widehat{W(1)}$ has good regular parameters
$$
x_1(1),\ldots,x_r(1),z_2,\ldots,z_m,x_{i+1,1},\ldots,x_{t,r_t},w_1',w_1,\ldots,w_l
$$
where 
$$
x_{i+1,1},\ldots,x_{t,r_t},w_1',w_1,\ldots,w_l\in Q(W(1)).
$$
 Since $z_2,\ldots,z_m\in U_i$ are ``algebraic'', and they have  relations in $k_i[[x_{1}(1),\ldots,x_r(1)]]$ by Lemma \ref{LemmaN31},  we may continue the algorithm, finding a sequence of transforms along $\nu_i$
$U_i\rightarrow W(1)\rightarrow \cdots\rightarrow W(m)$ so that there are regular parameters 
$$
x_1(1),\ldots,x_r(1),x_{i+1,1},\ldots,x_{t,r_t},w_1',\ldots,w_m',w_1,\ldots,w_l
$$
in $\widehat{W(m)}$ such that 
$$
Q(W(m))=(x_{i+1,1},\ldots,x_{t,r_t},w_1',\ldots,w_m',w_1,\ldots,w_l).
$$
Thus the conclusions of the theorem hold in $W(m)$.
\end{proof}
 
 \subsection{Resolution in the smallest rank}

Suppose  that $U_i$ is as in (\ref{eq17}) and there exist 
$$
w_1,\ldots,w_l\in P=P_{i+1}(U_i)=Q_{i+1}(A_i)\cap U_i\subset U_i
$$
 and $\hat z_1,\ldots, \hat z_m\in Q=Q_{i+1}(A_i)\subset A_i=\widehat{U(i)}$ such that 
\begin{equation}\label{eq21}
\begin{array}{l}
Q=(\hat z_1,\ldots,\hat z_m,x_{i+1,1},\ldots, x_{t,r_t},w_1,\ldots,w_l)\\
\mbox{and}\\
x_1,\ldots,x_r,\hat z_1,\ldots,\hat z_m,
x_{i+1,1},\ldots, x_{t,r_t},w_1,\ldots,w_l\\
\mbox{is a good regular system of parameters in $A_i$.} 
\end{array}
\end{equation}
Good regular systems of parameters are defined at the beginning of SubSection \ref{SecFor}.

We will show that if $m\ge 1$, then there exists a sequence of  transforms $U_i\rightarrow U(1)$ along $\nu_i$ such that there exist an expression (\ref{eq21}) in $U(1)$ and $A(1)=\hat U(1)$ with a decrease of $m$ (and increase in $l$).

We will prove this by descending induction on $m$. By Theorem \ref{Theorem1}, we can assume that there is such an expression with  $m=\dim U_i-\dim B_{\nu_i}$ (and $l=0$).

From equation (\ref{eq8}), we have a reduced primary decomposition
$PA_i=Q\cap I_2\cap \cdots\cap I_u$.  Thus there exists $f_1,\ldots,f_l\in P$ and $a_1,\ldots,a_l\in (A_i)_Q$ such that
$a_1f_1+\cdots+a_lf_l=\hat z_1$. Write $a_i=\frac{b_i}{c_i}$ where $b_i\in A_i$ and $c_i\in A_i\setminus Q$.
Set $d_i=b_i\prod_{j\ne i}c_j$. Then
$$
d_1f_1+\cdots +d_lf_l=c\hat z_1
$$
where $c=\prod c_i\not\in Q$ (so $\overline \nu_i(c)<\infty$). 

We will show that there exists $g\in P$ such that $g$ has an expansion
\begin{equation}\label{eq22}
g=\alpha_1 \hat z_1+\alpha_2\hat z_2+\cdots+\alpha_nw_l
\mbox{ with }\alpha_i\in A_i\mbox{ and }\alpha_1\not\in Q.
\end{equation}
If one of the $f_i$ has such an expansion, then we set $g=f_i$. Otherwise,
$$
f_i\in (\hat z_1^2,\hat z_2,\ldots,w_l)
$$
for all $i$, and so, $c \hat z_1=\sum_{i=1}^ld_if_i\in (\hat z_1^2,\hat z_2,\ldots,w_l)$. But  $c\not\in Q$ implies 
$c\hat z_1\not\in (\hat z_1^2,\hat z_2,\ldots,w_l)$, a contradiction. Thus some $f_i$ has an expansion (\ref{eq22}).

Suppose $g\in A_i$ has an expansion (\ref{eq22}). 
Let $\tau=\overline \nu_i(\alpha_1)<\infty$. For $1\le j\le m$,
there exists $z_j\in U_i$ such that 
$\hat z_j=z_j+h_j$ where $h_j\in m_{A_i}^{3\tau}$. Substituting into (\ref{eq22}), we obtain an expansion
$$
 g=\delta_0+\delta_1  z_1+\delta_2 z_2+\cdots+\delta_nw_l
$$
with $\delta_0\in (m_{k_i[[x_1,\ldots,x_r]]})^{3\tau}$, $\delta_1,\ldots,\delta_n\in A_i$ and $\overline \nu_i(\delta_1)=\tau<\infty$.

Expand 
$$
\delta_1=\sum_{1\le i}\beta_ix_1^{b_{1,i}}\cdots x_r^{b_{r,i}}+\gamma_1 z_1+\cdots +\gamma_n w_l
$$
where $0\ne \beta_i\in k_i$, $\nu_i(x_1^{b_{1,i}}\cdots x_r^{b_{r,i}})<\nu_i(x_1^{b_{1,i+1}}\cdots x_r^{b_{r,i+1}})$
for all $i$, and $\gamma_1,\ldots,\gamma_n\in A_i$, so that $\overline\nu_i(\beta_ix_1^{b_{1,1}}\cdots x_r^{b_{r,1}})=\tau$.
 
 Expand 
$$
\delta_0=\sum_{1\le i}\epsilon_ix_1^{c_{1,i}}\cdots x_r^{c_{r,i}}
$$
where $0\ne \epsilon_i\in k_i$ and  $\nu_i(x_1^{c_{1,i}}\cdots x_r^{c_{r,i}})<\nu_i(x_1^{c_{1,i+1}}\cdots x_r^{c_{r,i+1}})$
for all $i$.  

Let $J=(\{x_1^{b_{1,i}}\cdots x_r^{b_{r,i}}\},\{ x_1^{c_{1,j}}\cdots x_r^{c_{r,j}}\})$ be the ideal generated by all of the 
$x_1^{b_{1,i}}\cdots x_r^{b_{r,i}}$ and $x_1^{c_{1,j}}\cdots x_r^{c_{r,j}}$. There exists a  transform $U_i\rightarrow U(1)$ along $\nu_i$ of type (1,i) in $x_1,\ldots,x_r$ and $x_1(1),\ldots,x_r(1)$ such that $JU(1)$ is generated by 
$x_1^{b_{1,1}}\cdots x_r^{b_{r,1}}=x_1(1)^{e_1}\cdots x_r(1)^{e_r}$.

Now define a sequence of  transforms $U(1)\rightarrow U(2)$ along $\nu_i$ of types   (4,i) and (2,i) by 
$$
z_1=x_1(1)^{e_1}\cdots x_r(1)^{e_r}z_1(1),
$$
$$
z_i=x_1(1)^{2e_1}\cdots x_r(1)^{2e_r}z_i(1)\mbox{ for } 2\le i\le m,
$$
$$
x_{j,k}=x_1(1)^{e_1}\cdots x_r(1)^{e_r}x_{j,k}(1)\mbox{ for }i+1\le j\le t\mbox{ and all }k,
$$
$$
w_j=x_1(1)^{2e_1}\cdots x_r(1)^{2e_r}w_j(1)\mbox{ for all $j$}.
$$

Taking the sequence of completions of these rings, 
$A_i\rightarrow A(1)=\widehat{U(1)}\rightarrow A(2)=\widehat{U(2)}$,
we have an induced  sequence of  formal transforms along $\overline\nu_i$. Then $g=x_1(1)^{e_1}\cdots x_r(1)^{e_r}g_1$ where $g_1\in m_{A(2)}$ satisfies $g_1\equiv \beta_1z_1(1)\mbox{ mod }m_{A(2)}^2$.
Now 
$$
\frac{g}{x_1(1)^{e_1}\cdots x_r(1)^{e_r}}\in \widehat{U(2)}\cap \mbox{QF}(U(2))=U(2)
$$
by \cite[Lemma 2]{Ab2}, and so $g_1\in U(2)\cap Q_{i+1}(A(2))=P_{i+1}(U(2))$. We thus have a reduction of $m$ in $A(2)$.

By induction on $m$, we have thus established the following theorem.

 \begin{Theorem}\label{Theorem2} Suppose that $U_i$ is as in (\ref{eq17}) and $U_i$ has a good regular system of parameters satisfying (\ref{eq21}). Then
 there exists a sequence of  transforms $U_i\rightarrow U(1)$ along $\nu_i$ of types (i,1), (i,2), (i,3) and  (i,4)such that $P_{i+1}(U(1))$ is a regular prime ideal in $U(1)$ and 
 $$
 Q_{i+1}(\widehat{U(1)})=P_{i+1}(U(1))\widehat{U(1)}.
 $$
 \end{Theorem}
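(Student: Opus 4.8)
The plan is to prove the theorem by descending induction on $m$, the number of ``formal'' generators $\hat z_1,\ldots,\hat z_m$ of $Q=Q_{i+1}(A_i)$ appearing in an expression of the form (\ref{eq21}) that are not known to lie in $U_i$. For the base of the induction one invokes Theorem \ref{Theorem1}: after a sequence of transforms of types (1,i), (2,i), (3,i), (4,i) along $\nu_i$ we may assume that $Q_{i+1}(\widehat{U_i})$ is a regular prime, and reading off the parameters produced in the proof of Theorem \ref{Theorem1} this puts us in the situation (\ref{eq21}) with $l=0$ (and $m=\dim U_i-\dim B_{\nu_i}$). So it suffices to show: given an expression (\ref{eq21}) with $m\ge 1$, there is a sequence of transforms along $\nu_i$ after which one again has an expression of the form (\ref{eq21}) with $m$ replaced by $m-1$ (and $l$ by $l+1$). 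Iterating down to $m=0$ leaves $Q_{i+1}(\widehat{U(1)})=(x_{i+1,1},\ldots,x_{t,r_t},w_1,\ldots,w_l)\widehat{U(1)}$, generated by part of a good regular system of parameters lying in $U(1)$; hence $P_{i+1}(U(1))=Q_{i+1}(\widehat{U(1)})\cap U(1)$ is a regular prime and $Q_{i+1}(\widehat{U(1)})=P_{i+1}(U(1))\widehat{U(1)}$, as required.

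The heart of the inductive step is to locate an element $g\in P_{i+1}(U_i)$ whose expansion in a good regular system of parameters contains a genuine linear contribution from $\hat z_1$, i.e.
$$
g=\alpha_1\hat z_1+\alpha_2\hat z_2+\cdots+\alpha_n w_l,\qquad \alpha_j\in A_i,\ \alpha_1\notin Q.
$$
I would obtain this from (\ref{eq8}): since $P_{i+1}(U_i)A_i=Q\cap I_2\cap\cdots\cap I_u$ is a reduced primary decomposition, $Q$ is an associated prime of $P_{i+1}(U_i)A_i$, so after localizing at $Q$ some $A_i$-linear combination of elements $f_1,\ldots$ of $P_{i+1}(U_i)$ equals $\hat z_1$; clearing denominators yields $\sum_j d_jf_j=c\hat z_1$ with $c\notin Q$ (hence $\overline\nu_i(c)<\infty$). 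If every $f_j$ lay in $(\hat z_1^{\,2},\hat z_2,\ldots,w_l)$ then $c\hat z_1$ would lie there too, contradicting $c\notin Q$; so at least one $f_j$ has the required form, and we take that one as $g$.

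Granting such a $g$, set $\tau=\overline\nu_i(\alpha_1)<\infty$ and replace each formal $\hat z_j$ by an element $z_j\in U_i$ with $\hat z_j\equiv z_j$ modulo a sufficiently high power of $m_{A_i}$, so that $g=\delta_0+\delta_1z_1+\cdots+\delta_nw_l$ with $\delta_0$ a power series of very large order in $x_1,\ldots,x_r$ and $\overline\nu_i(\delta_1)=\tau$. Expanding $\delta_0$ and $\delta_1$ into $k_i$-linear combinations of monomials in $x_1,\ldots,x_r$ and applying Proposition \ref{Perron} to the ideal generated by all the monomials occurring, a transform of type (1,i) makes that ideal principal, generated by the monomial $x_1(1)^{e_1}\cdots x_r(1)^{e_r}$ of value $\tau$ coming from $\delta_1$. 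A sequence of transforms of types (4,i) and (2,i) then divides $z_1$ by $x_1(1)^{e_1}\cdots x_r(1)^{e_r}$ and the remaining generators $z_2,\ldots,z_m,x_{i+1,1},\ldots,x_{t,r_t},w_1,\ldots,w_l$ by suitable (strictly higher) powers of it, arranged so that, after passing to completions, $g=x_1(1)^{e_1}\cdots x_r(1)^{e_r}g_1$ with $g_1\in m_{A(2)}$ and $g_1\equiv\beta_1z_1(1)\bmod m_{A(2)}^2$. By \cite[Lemma 2]{Ab2}, $g_1=g/(x_1(1)^{e_1}\cdots x_r(1)^{e_r})\in\widehat{U(2)}\cap{\rm QF}(U(2))=U(2)$, so $g_1\in U(2)\cap Q_{i+1}(\widehat{U(2)})=P_{i+1}(U(2))$; since its leading form is $\beta_1z_1(1)$, it can be taken in place of one formal generator of $Q_{i+1}(\widehat{U(2)})$, and after relabelling one has an expression (\ref{eq21}) in $U(2)$ with $m$ decreased by $1$, completing the induction.

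The step I expect to be the main obstacle is twofold. First, producing $g$ with a nonzero $\hat z_1$-linear part: this is exactly where the associated-prime structure of $P_{i+1}(U_i)A_i$ from (\ref{eq8}), together with the ability to choose $c$ outside $Q$, has to be used carefully, and where a naive choice of generators of $Q$ would fail. Second, the bookkeeping of values: the formal $\hat z_j$ must be approximated closely enough (relative to $\tau$) that the monomial of value $\tau$ coming from $\delta_1$ really is the leading monomial after all the Perron transforms, and that neither $\delta_0$, nor the approximation errors, nor the denominator $c$ can obstruct the drop in $m$. Once these are in place, verifying that every transform invoked is legal — that the relevant $\nu_i$-inequalities for types (1,i)--(4,i) hold — is routine given Proposition \ref{Perron} and the definitions of the four types of transforms.
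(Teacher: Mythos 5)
Your proposal is correct and mirrors the paper's argument almost step for step: the descending induction on $m$ with Theorem \ref{Theorem1} supplying the starting expression, the use of the primary decomposition (\ref{eq8}) and clearing denominators to produce $g\in P_{i+1}(U_i)$ with $\alpha_1\notin Q$, the approximation of the formal generators $\hat z_j$ by elements of $U_i$, the type-(1,i) Perron transform to principalize the monomial ideal followed by type-(4,i) and type-(2,i) transforms, and the descent via \cite[Lemma 2]{Ab2} to conclude $g_1\in P_{i+1}(U(2))$. The obstacles you flag are precisely the delicate points the paper addresses, and your treatment of them is sound.
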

 
 \subsection{Resolution  in arbitrary rank}
 
 Suppose that $U_i$ is as in (\ref{eq17}). Regular parameters 
 $$
 x_{i,1},\ldots,x_{i,r_i},y_1,\ldots,y_m
 $$
 in $U_i$ such that $\nu_i(x_{i,1}),\ldots,\nu_i(x_{i,r_i})$ is  a $\ZZ$-basis of $\Gamma_i/\Gamma_{i-1}$ and the prime ideal $P_{i+1}(U_i)$ of (\ref{eq18}) is 
$P_{i+1}(U_i)=(y_1,\ldots,y_m)$ so that $P_{i+1}(U_i)$ is a regular prime ideal are called $\nu_i$-good regular parameters in $U_i$. 
 
\begin{Lemma}\label{Lemma4} Suppose that $U_i$ is as in (\ref{eq17})  and that $U_i$ has $\nu_i$-good regular parameters 
$$
x_{i,1},\ldots,x_{i,r_i},y_1,\ldots,y_m.
$$
\begin{enumerate}
\item[1)] Suppose that $f\in U_i\setminus P_{i+1}(U_i)$. Then there exists a sequence of transforms $U_i\rightarrow U(1)$ along $\nu_i$ of types (1,i) and (2,i) such that $f$ has an expression
$$
f=x_{i,1}(1)^{d_1}\cdots x_{i,r_i}(1)^{d_{r_i}}\gamma,
$$
 where $d_1,\ldots,d_{r_i}\in\NN$ and $\gamma\in U(1)$ is a unit. 
\item[2)] Suppose that $f\in P_{i+1}(U_i)$ and $\rho\in \Gamma_i/\Gamma_{i-1}$ is given. Then there exists a sequence of transforms $U_i\rightarrow U(1)$ along $\nu_i$ of types (1,i) and (2,i) such that $f=x_{i,1}(1)^{d_1}\cdots x_{i,r_i}(1)^{d_{r_i}}\gamma$,  where $d_1,\ldots,d_{r_i}\in\NN$, $\gamma\in U(1)$ and 
$$
\nu_i(x_{i,1}(1)^{d_1}\cdots x_{i,r_i}(1)^{d_{r_i}})>\rho.
$$
\end{enumerate}
\end{Lemma}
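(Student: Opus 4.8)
The plan is to derive both parts from the rank-one monomialization already available, i.e.\ from Lemma \ref{Lemma1} and Proposition \ref{Perron}, applied to the regular local ring $\overline{U}_i:=U_i/P_{i+1}(U_i)$, which is dominated by the rank-one valuation $\overline\nu_i$ and carries the regular system of parameters $\overline x_{i,1},\dots,\overline x_{i,r_i}$ with $\overline\nu_i(\overline x_{i,1}),\dots,\overline\nu_i(\overline x_{i,r_i})$ a $\ZZ$-basis of $\Gamma_i/\Gamma_{i-1}$. First I would record the two structural facts I need: (a) a transform of type $(1,i)$ on $U_i$ induces a sequence of primitive transforms (Definition \ref{Def2}) of $\overline{U}_i$ in the variables $\overline x_{i,1},\dots,\overline x_{i,r_i}$, and transforms of types $(1,i)$ and $(2,i)$ preserve $\nu_i$-goodness (so $P_{i+1}(U(1))$ remains a regular prime generated by the possibly modified $y_j$'s, and the first block of parameters keeps its $\ZZ$-basis of values); (b) since $\Gamma_i/\Gamma_{i-1}$ is the smallest nonzero isolated subgroup of $\Gamma_{\nu_i}$, one has $\nu_i(w)>\gamma$ for every $w\in P_{i+1}(U_i)$ and every $\gamma\in\Gamma_i/\Gamma_{i-1}$; in particular $\nu_i(w)$ exceeds the value of every nonzero monomial in $x_{i,1},\dots,x_{i,r_i}$.

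For part 2), write $f=\sum_{j=1}^m c_jy_j$ with $c_j\in U_i$, using $P_{i+1}(U_i)=(y_1,\dots,y_m)$. Since $\Gamma_i/\Gamma_{i-1}$ has rank one and $\nu_i(x_{i,1})>0$, choose $N\in\ZZ_{>0}$ with $\nu_i(x_{i,1}^N)>\rho$. For each $j$ we have $\nu_i(y_j)>\nu_i(x_{i,1}^N)$ by (b), so a transform of type $(2,i)$ factoring $x_{i,1}^N$ out of $y_j$, i.e.\ $y_j=x_{i,1}^Ny_j(1)$, is available; performing it for $j=1,\dots,m$ produces $U(1)$ with $P_{i+1}(U(1))=(y_1(1),\dots,y_m(1))$ and $f=x_{i,1}^N\sum_jc_jy_j(1)$. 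Taking $d_1=N$, $d_2=\dots=d_{r_i}=0$ and $\gamma=\sum_jc_jy_j(1)\in U(1)$ gives the conclusion.

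For part 1), $f\notin P_{i+1}(U_i)$ means $\overline f\ne 0$ in $\overline{U}_i$. Arguing as in Lemma \ref{Lemma1}, I expand $\overline f$ as a power series in $\widehat{\overline{U}_i}=k_i[[\overline x_{i,1},\dots,\overline x_{i,r_i}]]$ and let $I\subset\overline{U}_i$ be the finitely generated ideal generated by the monomials in $\overline x_{i,\bullet}$ occurring with nonzero coefficient; by $\ZZ$-independence of the $\overline\nu_i(\overline x_{i,k})$ these monomials have distinct values, so $\overline\nu_i(I)=\overline\nu_i(\overline f)$ is attained by a unique monomial $x^d$, and by Proposition \ref{Perron} a sequence of type-$(1,i)$ transforms $U_i\to U(1)$ makes $I$ principal on (the transform of) $x^d$. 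Then $\overline f=x^d\overline\gamma$ with $\overline\nu_i(\overline\gamma)=0$, so $\overline\gamma$ is a unit in $\widehat{\overline{U(1)}}$ and hence in $\overline{U(1)}$ by \cite[Lemma 2]{Ab2}; lifting, $f=x_{i,1}(1)^{d_1}\cdots x_{i,r_i}(1)^{d_{r_i}}\gamma+w$ with $\gamma\in U(1)$ a unit and $w\in P_{i+1}(U(1))$. Now apply part 2) to $w$ with $\rho:=\nu_i(x_{i,1}(1)^{d_1}\cdots x_{i,r_i}(1)^{d_{r_i}})$, obtaining transforms of types $(1,i),(2,i)$, $U(1)\to U(2)$, with $w=x(2)^{e}\delta$, $\delta\in U(2)$, $\nu_i(x(2)^{e})>\rho$, while the principal monomial is carried to a monomial $x(2)^{d'}$ of unchanged $\nu_i$-value and $\gamma$ to a unit. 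A final application of Proposition \ref{Perron} (type $(1,i)$) makes $x(2)^{d'}$ divide $x(2)^{e}$; in the resulting $U(3)$ these become monomials $x(3)^{d''}\mid x(3)^{e''}$ with $d''\le e''$ componentwise and $d''\ne e''$ (their values differ), so $f=x(3)^{d''}\bigl(\gamma''+x(3)^{e''-d''}\delta''\bigr)$ with $\gamma''$ a unit and $x(3)^{e''-d''}\in m_{U(3)}$, whence the parenthesized factor is a unit, as required.

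The step I expect to be the main obstacle is the synchronization at the end of part 1): monomializing the principal part and the remainder $w$ separately only yields $\nu_i(x(2)^{e})>\nu_i(x(2)^{d'})$, which does not imply $d'\le e$ componentwise, so the final invocation of Proposition \ref{Perron} is essential, and one must check that it does not recreate a remainder --- it does not, because a type-$(1,i)$ transform alters only the $x_{i,\bullet}$-block and thus keeps $\gamma$ a unit and $w$ inside $P_{i+1}$. A secondary technical point is to justify carefully that the transforms used to extract $x_{i,\bullet}$-monomials from the $y_j$'s remain of the permitted types $(1,i)$ and $(2,i)$ and that types $(1,i),(2,i)$ preserve $\nu_i$-goodness; I would verify this directly from the definitions of these transforms together with the description of $P_{i+1}(U(1))$ as the contraction of $P_{i+1}^{\nu}V_{\nu_i}$.
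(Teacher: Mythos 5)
Your proof is correct and uses the same underlying tools --- Perron monomialization (Proposition~\ref{Perron}) for type $(1,i)$ transforms and type $(2,i)$ transforms to absorb the $y_j$ --- but the organization of part 1) differs from the paper's in a way worth noting. The paper works directly in $A_i=\widehat{U_i}$, expanding $f=\sum_\lambda\alpha_\lambda\, x_{i,1}^{\lambda_{i,1}}\cdots x_{i,r_i}^{\lambda_{i,r_i}}+\sum_j h_j y_j$ with $\alpha_\lambda\in k_i$ and $h_j\in A_i$, applies Proposition~\ref{Perron} to make the monomial ideal $I=(x_{i,1}^{\lambda_{i,1}}\cdots x_{i,r_i}^{\lambda_{i,r_i}}\mid\alpha_\lambda\ne 0)$ principal on some $x(1)^d$, and then factors that \emph{same} monomial $x(1)^d$ out of every $y_j$ by the type $(2,i)$ transform $y_j=x(1)^d y_j(1)$ (permitted since $\nu_i(y_j)$ exceeds every element of $\Gamma_i/\Gamma_{i-1}$). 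Because the $x$-part and the $y$-part are divided by the same monomial at once, one immediately reads off $f=x(1)^d\bigl(\text{unit}+\sum_j h_j y_j(1)\bigr)$, which is a unit times a monomial in $\widehat{U(2)}$ and hence in $U(2)$ by \cite[Lemma 2]{Ab2}; no comparison of exponents is needed. You instead pass to the quotient $\overline{U_i}$, monomialize $\overline{f}$ there, lift to $f=x^d\gamma+w$ with $w\in P_{i+1}(U(1))$, then invoke part 2) to get $w=x(2)^e\delta$ with $\nu_i(x(2)^e)>\nu_i(x^d)$ --- and you correctly observe that the two monomials need not divide one another, so you add a third Perron application to synchronize. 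That extra step is avoidable precisely because, once $x(1)^d$ is in hand, one can factor it directly out of each $y_j$ rather than choosing a threshold $\rho$ and then comparing monomials. Your part 2) is essentially the paper's (the paper allows a general monomial $x_{i,1}^{d_1}\cdots x_{i,r_i}^{d_{r_i}}$ where you take $x_{i,1}^{N}$; both work since $\Gamma_i/\Gamma_{i-1}$ has rank one).
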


The proof of Lemma \ref{Lemma4} is similar to the proof of Lemma \ref{Lemma1}.

\begin{proof} First assume that $f$ is in Case 1). Expand
$$
f=\sum \alpha_{\lambda_{i,1},\ldots,\lambda_{i,r_i}}x_{i,1}^{\lambda_{i,1}}\cdots x_{i,r_i}^{\lambda_{i,r_i}}
+h_1y_1+\cdots+h_my_m
$$
in $A_i=\hat U_i$ with $\alpha_{\lambda_{i,1},\ldots,\lambda_{i,r_i}}\in k_i$ not all zero and $h_1,\ldots h_m\in A_i$. 
Let 
$$
I=(x_{i,1}^{\lambda_{i,1}}\cdots x_{i,r_i}^{\lambda_{i,r_i}}\mid \alpha_{\lambda_{i,1},\ldots,\lambda_{i,r_i}}\ne 0),
$$
 an ideal in $U_i$.
By Proposition \ref{Perron}, there exists a sequence of transforms of type (1,i) along $\nu_i$, $U_i\rightarrow U(1)$, where $U(1)$ has $\nu_i$-good parameters $x_{i,1}(1),\ldots,x_{i,r_i}(1),y_1,\ldots,y_m$ and there exist $d_{i,1}\ldots,d_{i,r_i}\in \NN$, such that 
$$
IU(1)=x_{i,1}(1)^{d_{i,1}}\cdots x_{i,r_i}(1)^{d_{i,r_i}}U(1).
$$
Now perform a sequence of transforms of type (2,i) along $\nu_i$, $U(1)\rightarrow U(2)$ defined by
$$
y_j=x_{i,1}(1)^{d_{i,1}}\cdots x_{i,r_i}(1)^{d_{i,r_i}}y_j(1)\mbox{ for $1\le j\le m$,}
$$
 to obtain
$f=x_{i,1}(1)^{d_{i,1}}\cdots x_{i,r_i}(1)^{d_{i,r_i}}\gamma$ where $\gamma\in \widehat{U(2)}$ is a unit.
Now $\gamma\in \widehat{U(2)}\cap K$ implies $\gamma\in U(2)$, achieving the conclusions of 1) in $U(2)$.

Now suppose that $f$ is in Case 2). Then $f=h_1y_1+\cdots+h_my_m$ with $h_1,\ldots,h_m\in U_i$. Then perform a sequence of transforms along $\nu_i$ of type (2,i), $U_i\rightarrow U(1)$, defined by $y_j=x_{i,1}^{d_1}\cdots x_{i,r_i}^{d_{r_i}}y_j(1)$ for $1\le i\le m$, with $d_1,\ldots,d_{r_i}\in \NN$, such that $\nu_i(x_{i,1}^{d_1}\cdots x_{i,r_i}^{d_{r_i}})>\rho$, to obtain the conclusions of 2) in $U(1)$.

\end{proof}

\begin{Lemma}\label{Lemma2} Suppose that $U_i$ is as in (\ref{eq17})  and that $U_i$ has $\nu_i$-good regular parameters 
$$
x_{i,1},\ldots,x_{i,r_i},y_1,\ldots,y_m.
$$
Suppose that $z_1,\ldots,z_m$ are  regular parameters in $U_{i+1}=(U_i)_{P_{i+1}(U_i)}$. Then there exists a sequence of transforms along $\nu_i$, $U_i\rightarrow U(1)$   of types (1,i) and (2,i) such that $U_{i+1}=U(1)_{P_{i+1}(U(1))}$ and $U(1)$ has 
$\nu_i$-good regular parameters $x_{1,1}(1),\ldots,x_{1,r_1}(1), y_1(1),\ldots,y_m(1)$ such that $\nu_i(x_{i,1}(1)),\ldots,\nu_i(x_{1,r_1}(1))$ is a $\ZZ$-basis of $\Gamma_i/\Gamma_{i-1}$, $P_{i+1}(U(1))=(y_1(1),\ldots,y_m(1))$ is a regular prime and for $1\le j\le m$,
$$
z_j=x_{i,1}(1)^{d_j^1}\cdots x_{i,r_i}(1)^{d_j^{r_i}}y_j(1)
$$
with $d_k^j\in \NN$ for all $j,k$.
\end{Lemma}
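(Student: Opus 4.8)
The plan is to write the $z_j$ in terms of $y_1,\dots,y_m$ via a matrix invertible over $U_{i+1}$, to monomialize everything occurring in this expression using Lemma \ref{Lemma4} and Proposition \ref{Perron}, and then to read off a $\nu_i$-good system of the required form. Since $y_1,\dots,y_m$ and $z_1,\dots,z_m$ are both regular systems of parameters of the regular local ring $U_{i+1}$ and $z_j\in m_{U_{i+1}}=(y_1,\dots,y_m)U_{i+1}$, first I would write $z_j=\sum_{k=1}^m c_{jk}y_k$ with $C=(c_{jk})\in GL_m(U_{i+1})$. The entries of $C^{-1}$ and $(\det C)^{\pm1}$ also lie in $U_{i+1}=(U_i)_{P_{i+1}(U_i)}$, so there is a single $q\in U_i\setminus P_{i+1}(U_i)$ clearing all their denominators; then $\tilde c_{jk}:=qc_{jk}\in U_i$ and $qz_j=\sum_k\tilde c_{jk}y_k\in P_{i+1}(U_i)$. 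Applying Lemma \ref{Lemma4}(1) to $q$ and to each $\tilde c_{jk}\notin P_{i+1}(U_i)$, and Lemma \ref{Lemma4}(2) to each $\tilde c_{jk}\in P_{i+1}(U_i)$ with a bound $\rho$ to be taken large, I obtain a sequence of transforms of types $(1,i)$ and $(2,i)$, $U_i\to U'$, after which $q$ and each $\tilde c_{jk}\notin P_{i+1}$ is a monomial in the new $x$-variables times a unit of $U'$, each $\tilde c_{jk}\in P_{i+1}$ is a monomial of $\nu_i$-value $>\rho$ times an element of $P_{i+1}(U')$, each $y_k$ equals a monomial times a new parameter $y_k'$ with $P_{i+1}(U')=(y_1',\dots,y_m')$, $U'$ is $\nu_i$-good, and $U_{i+1}=U'_{P_{i+1}(U')}$ (transforms of types $(1,i)$ and $(2,i)$ only invert monomials in the $x$'s and divide $y$'s by such monomials, all units of $U_{i+1}$, so they do not change the localization at $P_{i+1}$). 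Substituting gives $qz_j=\sum_k D_{jk}y_k'$ with $D=qC\,\mathrm{diag}(\text{monomials})$, so every $D_{jk}$ is an honest (nonnegative-exponent) monomial in the new $x$'s times a unit of $U'$ (times, for the cross-terms, a high-$\nu_i$-value element of $P_{i+1}$), $D$ is invertible over $U_{i+1}$, and $\det D$ is a monomial times a unit of $U'$.

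It then remains to bring $D$ to a ``monomial Smith normal form''. Using Proposition \ref{Perron} repeatedly (Perron transforms are of type $(1,i)$ and touch only the $x$'s), together with further type $(2,i)$ transforms pulling monomials out of the $y_k'$, I would reduce $D$ to a permutation matrix times a diagonal matrix of monomials $M_j'$ times a matrix invertible over the resulting ring $U(1)$; here $\det D$ being a unit of $U_{i+1}=U(1)_{P_{i+1}(U(1))}$ forces the permutation and makes the exponents work out so that $q\mid M_j'$, the type $(2,i)$ adjustments on the $y_k'$ being exactly what is needed where $M_j'/q$ would otherwise acquire a negative exponent (as already in the elementary case $z_1=y_1/x_1$, resolved by $y_1=x_1y_1(1)$). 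Then $z_j=M_jy_j(1)$ with $M_j:=M_j'/q$ an honest monomial in the $x$-variables of $U(1)$ and $y_j(1):=M_j^{-1}z_j$; since the remaining invertible-over-$U(1)$ factor of $D$ carries $(y_1',\dots,y_m')$ to $(y_1(1),\dots,y_m(1))$, the elements $x_{i,1}(1),\dots,x_{i,r_i}(1),y_1(1),\dots,y_m(1)$ form a $\nu_i$-good regular system of parameters of $U(1)$ with $P_{i+1}(U(1))=(y_1(1),\dots,y_m(1))$. The $\ZZ$-basis property of $\nu_i(x_{i,1}(1)),\dots,\nu_i(x_{i,r_i}(1))$ in $\Gamma_i/\Gamma_{i-1}$ is preserved at every stage, and $U_{i+1}=U(1)_{P_{i+1}(U(1))}$, completing the construction.

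The step I expect to be the main obstacle is this last reduction. One must organize the Smith-normal-form computation by Perron and type $(2,i)$ transforms so that the $m$ extracted monomials $M_j$ are honest monomials matched to \emph{distinct} new parameters $y_j(1)$ — which is where the invertibility of $C$ over $U_{i+1}$ is used, to exclude degenerate ``leading monomial'' matrices — and so that the cross-terms with $\tilde c_{jk}\in P_{i+1}$ (whence the choice of a large $\rho$) are of high enough order that $(y_1(1),\dots,y_m(1))$ is genuinely equal to $P_{i+1}(U(1))$ rather than only to an ideal agreeing with it after localizing at $P_{i+1}(U(1))$. No new deep idea beyond Lemma \ref{Lemma4} and Proposition \ref{Perron} appears to be required; the content is the bookkeeping of which monomial is extracted from which variable at each step.
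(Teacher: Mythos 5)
Your overall route is the same as the paper's: write $z_j=\sum_k a_{jk}y_k$ with $(a_{jk})\in GL_m(U_{i+1})$, use Lemma~\ref{Lemma4} and Proposition~\ref{Perron} to monomialize the data, and then reduce the coefficient matrix by column operations (changes of $y$-parameters) and types $(1,i)$, $(2,i)$ transforms.  However, the step you flag yourself as the ``main obstacle'' --- the monomial Smith-normal-form reduction of $D$ --- is precisely the content of the proof and is left as a sketch.  As written, ``reduce $D$ to a permutation matrix times a diagonal matrix of monomials times a matrix invertible over $U(1)$'' is an assertion, not an argument: you do not say which pivot you choose, why a pivot exists at each stage, or how the chosen operations keep the $y$'s a regular system of parameters and preserve $\nu_i$-goodness.

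The paper fills this gap with a clean row-by-row induction that is worth recording against your outline.  After monomializing the $c_{jk}$ and pulling a monomial out of each $y_k$ (a type $(2,i)$ transform) so that all coefficients $f_{jk}$ lie in the new ring $U(2)$, the key observation is that $\det(f_{jk})$ is a unit of $U_{i+1}$, hence some entry $f_{jk}\notin P_{i+1}(U(2))$; take it in row $1$.  Applying Lemma~\ref{Lemma4} to each $f_{1k}$ (case $1$ for entries $\notin P_{i+1}$, case $2$ with large $\rho$ for entries $\in P_{i+1}$) and then a type $(1,i)$ transform, one extracts a common monomial $x^{\beta}$ from row $1$ with a unit coefficient $g_{11}$ on some $y$; the change of parameters $y_1(4)\mapsto g_{11}y_1(4)+\sum_{k\ge 2}g_{1k}'y_k(4)$ (legitimate because $g_{11}$ is a unit) then yields $z_1=x^\beta y_1(4)$.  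Because $x^\beta$ is a unit of $U_{i+1}$, the determinant of the remaining $(m-1)\times(m-1)$ block is still a unit in $U_{i+1}$, i.e.\ $\notin P_{i+1}(U(4))$, so the induction proceeds.  Note also that the paper does \emph{not} clear denominators by a single $q\in U_i\setminus P_{i+1}(U_i)$ as you propose; it instead pulls monomials out of the $y_k$ directly via type $(2,i)$ transforms.  This avoids entirely the side condition $q\mid M_j'$ that you introduce and then have to repair with further type $(2,i)$ adjustments, and it makes the fact that the intermediate rings all localize to $U_{i+1}$ at $P_{i+1}$ immediate.  Your instinct that nothing beyond Lemma~\ref{Lemma4} and Proposition~\ref{Perron} is needed is correct; what is missing is the precise pivoting argument driven by the unit-determinant observation.
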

 
\begin{proof} We have that 
$$
(y_1,\ldots,y_m)U_{i+1}=(z_1,\ldots,z_m)U_{i+1}=P_{i+1}(U_i)U_{i+1}.
$$
Thus there exist $a_{j,k}\in U_{i+1}$ such that for $1\le j\le m$,
$$
z_j=\sum_{k=1}^m a_{j,k}y_k.
$$
There exist $b_{j,k}\in U_i$ and $c_{j,k}\in U_i\setminus P_{i+1}(U_i)$ such that $a_{j,k}=\frac{b_{j,k}}{c_{j,k}}$.
 By Lemma \ref{Lemma4}, there exists a sequence of transforms $U_i\rightarrow U(1)$ along $\nu_i$ of types (1,i) and (2,i) such that $U(1)$ has $\nu_i$-good regular parameters
 $$
 x_{i,1}(1),\ldots,x_{i,r_i}(1),y_1(1),\ldots,y_m(1)
 $$
 such that for all $j,k$,
 $$
 c_{j,k}=x_{i,1}(1)^{d_{jk}^1}\cdots x_{i,r_i}(1)^{d_{jk}^{r_i}}\gamma_{jk}
 $$
 where $\gamma_{jk}$ are units in $U(1)$ and $d_{jk}^l\in \NN$. Now perform a sequence of transforms $U(1)\rightarrow U(2)$ along $\nu_i$ of type (2,i)
 $$
 y_k(1)=x_{i,1}(1)^{e_1^k}\cdots x_{i,r_i}(1)^{e_{r_i}^k}y_k(2)
 $$
 for $1\le k\le m$ so that we have expansions 
 $$
 z_j=\sum_{k=1}^m f_{jk}y_k(2)\mbox{ for }1\le j\le m
 $$
 with $f_{jk}\in U(2)$ for all $j,k$. We continue to have $U(2)_{P_{i+1}(U(2))}=U_{i+1}$. Since $\mbox{Det}(f_{jk})$ is a unit in $U_{i+1}$, there exists an $f_{jk}$ such that $f_{jk}\not\in P_{i+1}(U(2))$. Without loss of generality, $j=1$. By Lemma \ref{Lemma4}, there exists a sequence of transforms along $\nu_i$, $U(2)\rightarrow U(3)$, of types (1,i) and (2,i) such that $U(3)$ has  $\nu_i$-good regular  parameters 
 $x_{i,1}(3),\ldots,x_{i,r_i}(3),y_1(3),\ldots,y_m(3)$ such that
 $$
 f_{jk}=x_{i,1}(3)^{\alpha_{j,k}^1}\cdots x_{i,r_i}(3)^{\alpha_{j,k}^{r_i}}g_{j,k}
 $$
 where $g_{j,k}\in U(3)$ is a unit if $f_{jk}\not\in P_{i+1}(U(3))$ and $\nu_i(x_{i,1}(3)^{\alpha_{jk}^1}\cdots x_{i,r_i}(3)^{\alpha_{jk}^{r_i}})$ is arbitrarily large if $g_{jk}\in P_{i+1}(U(3))$. After performing a transform $U(3)\rightarrow U(4)$ along $\nu_i$ of type (i,1), and  permuting the $y_k(3)$, we have an expression
 $$
 z_1=x_{i,1}(4)^{\beta_1}\cdots x_{i,r_i}(4)^{\beta_{r_i}}\left[
 g_{1,1}y_1(4)+\sum_{k=2}^m g_{1,k}'y_k(4)\right]
 $$
 where $g_{jk}'\in U(4)$ and $g_{11}$ is a unit in $U(4)$. We then make a change of variables in $U(4)$, replacing $y_1(4)$ with $g_{1,1}y_1(4)+\sum_{k=2}^m g_{jk}'y_k(4)$, giving equations
 $$
 \begin{array}{lll}
 z_1&=&x_{i,1}(4)^{\beta_1}\cdots x_{i,r_i}(4)^{\beta_{r_i}} y_1(4)\\
 z_j&=& \sum_{k=1}^mg_{jk}'y_k(4)\mbox{ for $2\le j\le m$}
 \end{array}
 $$
  with $g_{jk}'\in U(4)$. We thus have that
  $$
  {\rm Det}\left(\begin{array}{ccc}
  g_{2,2}'&\cdots& g_{2,m}'\\
  \vdots&&\vdots\\
  g_{m,2}'&\cdots& g_{m,m}'  
  \end{array}\right)
  \not\in P_{i+1}(U(4)),
 $$
 so that some $g_{j,k}'\not\in P_{i+1}(U(4))$, with $2\le j, 2\le k$.
  We may thus continue as above to construct $U(4)\rightarrow U(5)$ such that $U(5)_{P_{i+1}(U(5))}=U_{i+1}$ and
 $$
 \begin{array}{lll}
 z_1&=&x_{i,1}(5)^{\beta_{1,1}}\cdots x_{i,r_i}(5)^{\beta_{1,r_i}} y_1(5)\\
 z_2&=&x_{i,1}(5)^{\beta_{2,1}}\cdots x_{i,r_i}(5)^{\beta_{2,r_i}} y_2(5)\\
 z_j&=& \sum_{k=1}^mg_{jk}'y_k(5)\mbox{ for $3\le j\le m.$}
 \end{array}
 $$ 
 By induction, we continue, to achieve the conclusions of the lemma.
 \end{proof}

 \begin{Lemma}\label{Lemma3} 
 Suppose that $U_i$ is as in (\ref{eq17})  and that $U_i$ has $\nu_i$-good regular parameters 
$$
x_{i,1},\ldots,x_{i,r_i},y_1,\ldots,y_m.
$$
Suppose that $U_{i+1}=(U_i)_{P_{i+1}(U_i)}$ has good regular parameters 
$$
\overline x_{i+1,1},\ldots,\overline x_{i+1},\ldots, \overline x_{t,1},\ldots,\overline x_{t,r_t}, \overline y_1,\ldots,\overline y_{\overline m}.
$$
Then there exist a sequence of transforms of types of types (1,i) and (2,i) $U_i\rightarrow U(1)$  along $\nu_i$   such that $U(1)$ has good regular parameters 
$$
x_{i,1}(1),\ldots,x_{i,r_i}(1),\ldots,x_{t,1}(1),\ldots,x_{t,r_t}(1),y_1(1),\ldots,y_{\overline m}(1)
$$
such that 
$$
\overline x_{a,b}=x_{i,1}(1)^{d_{a,b}^1}\cdots x_{i,r_i}(1)^{d_{a,b}^{r_i}}x_{a,b}(1)
$$
for $a\ge i+1$ and
$$
\overline y_l=x_{i,1}(1)^{e_l^1}\cdots x_{i,r_i}(1)^{e_l^{r_i}}y_l(1)
$$
for $1\le l\le m$. 

Suppose that
  $U_{i+1}\rightarrow X$ is a transform along $\nu_{i+1}$ of one of the types (1,j), (2,j), (3,j) or (4,j) with $i+1\le j$. Then there exists a sequence of transforms $U(1)\rightarrow V(1)$ along $\nu_i$ of types (1,k), (2,k), (3,k) and  (4,k) with $k=i$ or $k=j$ such that $V(1)_{P_{i+1}(V(1))}=X$.
\end{Lemma}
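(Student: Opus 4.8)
The plan is to get the first assertion straight from Lemma~\ref{Lemma2}, and the second by lifting the given transform $U_{i+1}\to X$ to a transform of the same type performed one level down, on $U(1)$.

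\emph{First assertion.} I would apply Lemma~\ref{Lemma2} with $z_1,\dots,z_m$ taken to be the given good regular system of parameters $\overline x_{i+1,1},\dots,\overline x_{t,r_t},\overline y_1,\dots,\overline y_{\overline m}$ of $U_{i+1}=(U_i)_{P_{i+1}(U_i)}$ (there are $m=\dim U_{i+1}$ of them, as the lemma requires). That lemma produces a sequence of transforms of types $(1,i)$ and $(2,i)$, $U_i\to U(1)$, with $U_{i+1}=U(1)_{P_{i+1}(U(1))}$, with $\nu_i$-good parameters $x_{i,1}(1),\dots,x_{i,r_i}(1),\tilde y_1(1),\dots,\tilde y_m(1)$, and with each $z_j$ equal to a monomial in $x_{i,1}(1),\dots,x_{i,r_i}(1)$ times $\tilde y_j(1)$. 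Renaming $\tilde y_j(1)$ as $x_{a,b}(1)$ when $z_j=\overline x_{a,b}$ and as $y_l(1)$ when $z_j=\overline y_l$ gives the asserted monomial relations; it remains to see that $x_{i,1}(1),\dots,x_{t,r_t}(1),y_1(1),\dots,y_{\overline m}(1)$ is a \emph{good} system of parameters in $U(1)$, i.e. that $\nu_j(x_{j,1}(1)),\dots,\nu_j(x_{j,r_j}(1))$ is a $\ZZ$-basis of $\Gamma_j/\Gamma_{j-1}$ for $i+1\le j\le t$. This holds because $\nu_j$ is the coarsening of $\nu_i$ with value group $\Gamma_\nu/\Gamma_{j-1}$, so $\nu_j$ kills $x_{i,1}(1),\dots,x_{i,r_i}(1)$ (their $\nu_i$-values lie in $\Gamma_i/\Gamma_{i-1}\subseteq\Gamma_{j-1}/\Gamma_{i-1}$), whence $\nu_j(x_{a,b}(1))=\nu_j(\overline x_{a,b})$, and goodness of $\overline x_{i+1,1},\dots,\overline y_{\overline m}$ in $U_{i+1}$ does the rest.

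\emph{Second assertion.} Assume $U(1)$ as above, so $\overline x_{a,b}=M_{a,b}\,x_{a,b}(1)$ and $\overline y_l=N_l\,y_l(1)$ with $M_{a,b},N_l$ monomials in $x_{i,1}(1),\dots,x_{i,r_i}(1)$. Write the transform $U_{i+1}\to X$ of type $(\star,j)$, $i+1\le j$, as $X=U_{i+1}[\eta_1,\dots,\eta_q]_{\nu_{i+1}}$, the $\eta$'s being built from $\overline x_{j,1},\dots,\overline x_{j,r_j}$ (and the distinguished good parameter $\overline x_{a,b}$, resp. $\overline y_k$) by the monomial recipe of type $(\star,j)$. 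I would perform on $U(1)$ the transform of type $(\star,j)$ at level $j$ obtained by substituting $x_{j,1}(1),\dots,x_{j,r_j}(1)$ for $\overline x_{j,1},\dots,\overline x_{j,r_j}$ and $x_{a,b}(1)$ (resp. $y_k(1)$) for $\overline x_{a,b}$ (resp. $\overline y_k$), with the same exponent data; call the result $V(1)$. The defining conditions of a level-$j$ transform on $U(1)$ follow from those of $U_{i+1}\to X$ together with: (i) $\nu_j$ kills every monomial in $x_{i,1}(1),\dots,x_{i,r_i}(1)$, so the $\nu_j$-values used in types $(1,j)$ and $(3,j)$ are the same for $\overline x_{j,\cdot}$ and for $x_{j,\cdot}(1)$; (ii) $\nu_{i+1}$ also kills those monomials; and (iii) for an isolated subgroup $H\supseteq\Gamma_{i-1}$, the preimage of $H/\Gamma_i$ under $\Gamma_\nu/\Gamma_{i-1}\twoheadrightarrow\Gamma_\nu/\Gamma_i$ is $H/\Gamma_{i-1}$, which converts the hypotheses ``$\nu_{i+1}(\cdot)\in\Gamma_{j+1}$'', ``$\nu_{i+1}(\cdot)\in\Gamma_j$'', ``$\nu_{i+1}(\cdot)>\nu_{i+1}(\cdot)$'' and, for type $(3,j)$, ``$\nu_{i+1}(\overline y_k(1))>0$'' (valid since $\overline y_k(1)\in m_X\subseteq m_{\nu_{i+1}}$) into the required $\nu_i$-statements on $U(1)$.

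Lastly, $V(1)_{P_{i+1}(V(1))}$ is the localization of $U(1)_{P_{i+1}(U(1))}=U_{i+1}$ obtained by adjoining the new generators of $V(1)$ and localizing at the center of $\nu_{i+1}$; since localizing at $P_{i+1}(V(1))$ makes every $x_{i,p}(1)$, hence every $M_{a,b}$ and $N_l$, a unit, these new generators differ from $\eta_1,\dots,\eta_q$ by units of $U_{i+1}$, so $V(1)_{P_{i+1}(V(1))}=U_{i+1}[\eta_1,\dots,\eta_q]_{\nu_{i+1}}=X$. I expect the real work to be this bookkeeping in the second assertion: tracking the monomials in $x_{i,1}(1),\dots,x_{i,r_i}(1)$ (which may carry negative exponents) relating the two parameter systems, verifying that each lifted transform is a bona fide transform at level $j$, in particular that the ``$\ge 0$'' hypotheses of types $(3,j)$ and $(4,j)$ survive the change of parameters, and confirming that these monomials become units after the final localization, so that the ring produced is exactly $X$.
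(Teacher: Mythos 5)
Your first assertion is handled exactly as in the paper: apply Lemma~\ref{Lemma2} to the given good parameters of $U_{i+1}$, and the check that the lifted $x_{j,k}(1)$ remain good for $j>i$ (because $\nu_j$ annihilates monomials in $x_{i,1}(1),\dots,x_{i,r_i}(1)$) is correct and a nice addition.

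The second assertion has a genuine gap in the case of a type $(3,j)$ transform. You propose to lift $U_{i+1}\to X$ to a single transform of type $(3,j)$ on $U(1)$, ``with the same exponent data,'' and justify the legality of that transform by the chain in your item (iii) together with the claim $\nu_{i+1}(\overline y_k(1))>0$. But the hypothesis in the definition of type $(3,j)$ is $\nu_i(y_k/\prod x_{j,l}^{a_l})\ge 0$, which at level $i+1$ only gives $\nu_{i+1}(\overline y_k(1))\ge 0$; when this value is exactly $0$, $\overline y_k(1)$ is a unit in $V_{\nu_{i+1}}$ and hence not in $m_X$, so the claim $\overline y_k(1)\in m_X$ fails. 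In that borderline case the lifted ratio $y_k(1)/\prod x_{j,l}(1)^{a_l}$ differs from $\overline y_k(1)$ by a monomial in $x_{i,1}(1),\dots,x_{i,r_i}(1)$ whose $\nu_i$-value lies in $\Gamma_i/\Gamma_{i-1}$ and may well be negative, so the inequality $\nu_i\bigl(y_k(1)/\prod x_{j,l}(1)^{a_l}\bigr)\ge 0$ that a type $(3,j)$ transform on $U(1)$ requires can fail. Passing from $\nu_{i+1}$ to $\nu_i$ is safe only for strict inequalities (which is why your approach does go through for types $(1,j)$, $(2,j)$ and $(4,j)$). The paper repairs exactly this: it observes that if $\nu_i$ of the lifted ratio is negative then its $\nu_{i+1}$-value must be $0$, and it inserts a preliminary transform of type $(2,i)$, namely $x_{j,k}(1)=x_{i,1}(1)^n x_{j,k}(2)$ with $n$ chosen so that $n\,a_k\,\nu_i(x_{i,1}(1))$ absorbs the negative $\Gamma_i/\Gamma_{i-1}$-part of the ratio; only then does it perform the type $(3,j)$ transform. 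You flag precisely this point at the end of your write-up as ``the real work,'' but your item (iii) is an assertion about preimages of subgroups and does not control signs in $\Gamma_i/\Gamma_{i-1}$, so it does not discharge the $\ge 0$ hypothesis. Adding the auxiliary type $(2,i)$ step (which your statement of the lemma already permits, since $k=i$ is allowed) closes the gap.
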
 

\begin{proof}
The existence of the map $U_i\rightarrow U(1)$ having the properties asserted in the Lemma is a consequence of Lemma \ref{Lemma2}.

Suppose that $U_{i+1}\rightarrow X$ is of type (1,j). 
Then 
$X=U_{i+1}[\overline x_{j,1}(1),\ldots,\overline x_{j,r_j}(1)]_{\nu_{i+1}}$ where
$\overline x_{j,k}=\prod_{l=1}^{r_j}\overline x_{j,l}(1)^{a_{kl}}$ for $1\le k\le r_j$. 
We have that for $1\le l\le r_j$,
$$
\overline x_{j,l}(1)=\prod_{k=1}^{r_j}x_{k,l}(1)^{b_{k,l}}\delta_l
$$
where $\delta_l=\prod_{k=1}^{r_j}(x_{i,1}(1)^{d_{k,l}^1}\cdots x_{i,r_i}(1)^{d_{k,l}^{r_i}})^{b_{k,l}}$ is a unit in $U_{i+1}$. 

Now $\overline x_{j,l}(1)=\prod_{k=1}^{r_j}\overline x_{j,l}^{b_{k,l}}$ where $(b_{k,l})=(a_{k,l})^{-1}$ is a matrix with integral coefficients.  Thus $\nu_j(\prod_{k=1}^{r_k}x_{k,l}(1)^{b_{k,l}})>0$ for $1\le l\le r_j$.   
Defining $U(1)\rightarrow U(2)$ by $U(2)=U(1)[x_{j,1}(2),\ldots,x_{j,r_j}(2)]_{\nu_i}$ where $x_{j,k}(1)=\prod_{l=1}^{r_j} x_{j,l}(2)^{a_{k,l}}$ for $1\le k\le r_j$, we thus have that $U(2)_{P_{i+1}(U(2))}=X$.

Now suppose that $U_{i+1}\rightarrow X$ is of type (3,j). Then $X=U_{i+1}[\overline y_k(1)]_{\nu_{i+1}}$ where $\overline y_k=\overline x_{j,1}^{a_1}\cdots \overline x_{j,r_j}^{a_{r_j}}\overline y_k(1)$ with $\nu_j(\overline y_k(1))=0$ and $\nu_{i+1}(\overline y_k(1))\ge 0$. 
Now
$$
\nu_{i+1}\left(\frac{y_k(1)}{ x_{j,1}(1)^{a_1}\cdots  x_{j,r_j}(1)^{a_{r_j}}}\right)
=\nu_{i+1}\left(\frac{\overline y_k}{\overline x_{j,1}^{a_1}\cdots \overline x_{j,r_j}^{a_{r_j}}}\right)
\ge 0
$$
so if 
$$
\nu_{i}\left(\frac{y_k(1)}{x_{j,1}(1)^{a_1}\cdots x_{j,r_j}(1)^{a_{r_j}}}\right)<0,
\mbox{ then }
\nu_{i+1}\left(\frac{y_k(1)}{x_{j,1}(1)^{a_1}\cdots x_{j,r_j}(1)^{a_{r_j}}}\right)=0,
$$
and so there exists $k$ such that $a_k\ne 0$ and $n\in \NN$ such that 
$$
na_k\nu_i(x_{i,1}(1))+\nu_i \left(\frac{y_k(1)}{x_{j,1}(1)^{a_1}\cdots x_{j,r_j}(1)^{a_{r_j}}}\right)
\ge 0.
$$
Let $U(1)\rightarrow U(2)$ be the transform of type (2,i) defined by $x_{j,k}(1)=x_{i,1}(1)^n x_{j,k}(2)$. Then, setting $x_{j,l}(2)=x_{j,k}(1)$ for $l\ne k$, define $U(2)\rightarrow U(3)$ to be the transform of type (3,j) defined by $ y_k(1)=x_{j,1}(2)^{a_1}x_{j,2}(2)^{a_2}\cdots x_{j,r_j}(1)^{a_{r_j}}y_k(2)$.

The remaining two cases, transforms of types (2,j) and (4,j), have a similar but simpler analysis.
\end{proof}

\begin{Theorem}\label{Theorem3} Let $D$ be the local ring of (\ref{eq24}). There exists a sequence of transforms $D\rightarrow D(1)$ along $\nu$ such that 
there exist  good  regular parameters 
$$
x_{1,1},\ldots,x_{t,r_t},y_{1},\ldots,y_m
$$
 in $D(1)$ such that 
$\nu_j(x_{j,1}),\ldots,\nu_j(x_{j,r_j})$ is a $\ZZ$-basis of $\Gamma_j/\Gamma_{j-1}$ for $1\le j\le t$ and 
$$
\nu(y_{1})=\cdots = \nu(y_m)=\infty.
$$
 In particular, $P_j(D(1))$ are regular primes in $D(1)$ for $1\le j\le t+1$ and thus $D(1)/P_{t+1}(D(1))$ is a regular local ring which is dominated by $\nu$ and dominates $B_{\nu_1}$.
We further have that 
$$
\left(D(1)/P_i(D(1))\right)_{P_i(D(1))}\cong V_{\nu_i}/m_{\nu_i}
$$
for $1\le i\le t$.
\end{Theorem}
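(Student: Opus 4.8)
The plan is a descending induction on $i$, for $t\ge i\ge 1$, establishing the statement $(\star_i)$: there is a sequence of transforms $D_i\to U_i$ along $\nu_i$ such that $U_i$ is as in (\ref{eq17}), has a good regular system of parameters $x_{i,1},\ldots,x_{t,r_t},y_1,\ldots,y_m$ with $\nu_j(x_{j,1}),\ldots,\nu_j(x_{j,r_j})$ a $\ZZ$-basis of $\Gamma_j/\Gamma_{j-1}$ for $i\le j\le t$, with $\nu_i(y_l)=\infty$ (equivalently $y_l\in P_{t+1}(U_i)$) for all $l$, and with $(U_i/P_j(U_i))_{P_j(U_i)}\cong V_{\nu_j}/m_{\nu_j}$ for $i\le j\le t$. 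Since $D_1=D_{m_D}=D$, $\nu_1=\nu$ and $P_{t+1}^\nu=(0)$, the case $i=1$ yields the theorem: each $P_j(D(1))$ is then generated by a subset of a regular system of parameters, hence is a regular prime; $D(1)/P_{t+1}(D(1))$ is dominated by $\nu$ because $D(1)$ is; and it dominates $B_{\nu_1}$ because $B_{\nu_1}=D/P_{t+1}(D)$ (the kernel of $D\to V_{\nu}$ is exactly the ideal realizing the quotient $B_{\nu_1}$), so $D(1)/P_{t+1}(D(1))$ is a birational extension of $B_{\nu_1}$.

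For the base case $i=t$, take $U_t=D_t$, which is of the form (\ref{eq17}); here $\overline\nu_t=\nu_t$. Applying Theorem \ref{Theorem1} and then Theorem \ref{Theorem2} gives a sequence of transforms of types $(k,t)$ after which $P_{t+1}(U_t)$ is a regular prime $(y_1,\ldots,y_m)$ and $Q_{t+1}(\widehat{U_t})=P_{t+1}(U_t)\widehat{U_t}$; since $P_{t+1}^\nu=(0)$ this says $\nu_t(y_l)=\infty$. Type $(1,t)$ transforms act on $x_{t,1},\ldots,x_{t,r_t}$ by an integer matrix of determinant $\pm1$ and the other types fix these variables, so $\nu_t(x_{t,1}),\ldots,\nu_t(x_{t,r_t})$ remains a $\ZZ$-basis of $\Gamma_t/\Gamma_{t-1}$; and the residue field stays $V_{\nu_t}/m_{\nu_t}$, being squeezed at each transform $U\to U[\,\cdot\,]_{\nu_t}$ between that of $D_t$ (which is $V_{\nu_t}/m_{\nu_t}$, as $B_{\nu_t}$ is a quotient of $D_t$) and $V_{\nu_t}/m_{\nu_t}$. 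For the inductive step, assume $(\star_{i+1})$ and fix $i$. First apply Theorem \ref{Theorem1} and Theorem \ref{Theorem2} to $U_i=D_i$, producing $D_i\to\tilde U_i$ by transforms of types $(k,i)$ with $\tilde U_i$ having $\nu_i$-good regular parameters. The crucial point is that localizing any type $(k,i)$ transform at the ideal $P_{i+1}$ is the identity: the adjoined element differs from an already present one by a monomial in $x_{i,1},\ldots,x_{i,r_i}$, and these variables have $\nu_i$-value in $\Gamma_i/\Gamma_{i-1}$, hence lie outside $P_{i+1}$ and become units there; therefore $(\tilde U_i)_{P_{i+1}(\tilde U_i)}=(D_i)_{P_{i+1}(D_i)}=D_{i+1}$.

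Now $D_{i+1}$ has a good regular system of parameters, so Lemma \ref{Lemma3} applies: its first part gives a further transform $\tilde U_i\to U(1)$ (types $(1,i)$, $(2,i)$) whose good parameters, together with those of $(U(1))_{P_{i+1}(U(1))}=D_{i+1}$, are related by monomials in $x_{i,1},\ldots,x_{i,r_i}$, and its second part lifts, one transform at a time, the sequence $D_{i+1}\to W_{i+1}$ given by $(\star_{i+1})$; iterating (the output of each lifting again has the shape needed to reapply Lemma \ref{Lemma3}) produces $\tilde U_i\to U_i'$ along $\nu_i$ with $(U_i')_{P_{i+1}(U_i')}=W_{i+1}$ and the good parameters of $W_{i+1}$ obtained from those of $U_i'$ by dividing by monomials in $x_{i,1},\ldots,x_{i,r_i}$. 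I then check $(\star_i)$ for $U_i'$. The $x_{i,1},\ldots,x_{i,r_i}$ are altered only by type $(1,i)$ transforms, so $\nu_i(x_{i,1}),\ldots,\nu_i(x_{i,r_i})$ is a $\ZZ$-basis of $\Gamma_i/\Gamma_{i-1}$; for $i+1\le j\le t$ any monomial in $x_{i,1},\ldots,x_{i,r_i}$ has $\nu_j$-value $0$ (its value lies in $\Gamma_i\subseteq\Gamma_{j-1}$), so the $\nu_j$-values of $x_{j,1},\ldots,x_{j,r_j}$ in $U_i'$ coincide with those of the corresponding parameters of $W_{i+1}$ and form a $\ZZ$-basis of $\Gamma_j/\Gamma_{j-1}$ by $(\star_{i+1})$; and $\nu_i(y_l)=\infty$ since $y_l$ is a unit of $W_{i+1}$ times a parameter of $W_{i+1}$ in $P_{t+1}(W_{i+1})=P_{t+1}(U_i')W_{i+1}$, whence $y_l\in P_{t+1}(U_i')W_{i+1}\cap U_i'=P_{t+1}(U_i')$. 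Hence each $P_j(U_i')$ is generated by a subset of the good regular system of parameters, and for residue fields $(U_i'/P_j(U_i'))_{P_j(U_i')}=(W_{i+1}/P_j(W_{i+1}))_{P_j(W_{i+1})}\cong V_{\nu_j}/m_{\nu_j}$ for $i<j\le t$ by $(\star_{i+1})$, while for $j=i$ the residue field of $U_i'$ is again squeezed between that of $D_i$, namely $V_{\nu_i}/m_{\nu_i}$, and $V_{\nu_i}/m_{\nu_i}$.

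I expect the main obstacle to be the iteration of Lemma \ref{Lemma3} across the sequence $D_{i+1}\to W_{i+1}$: each step requires the current level-$i$ ring together with its localization at $P_{i+1}$ to be in exactly the form Lemma \ref{Lemma3} demands — good systems of parameters related by monomials in $x_{i,1},\ldots,x_{i,r_i}$ — so one must verify that this shape is reproduced by the very transforms Lemma \ref{Lemma3} outputs, a bookkeeping argument resting on the fact that each of the four transform types carries a canonical good system of parameters, and one must keep the $P_{i+1}$-localization literally equal to $W_{i+1}$, not merely birational to it, throughout. The verification of the value-group and residue-field clauses, by contrast, should be routine once the correct relation between the parameters of $U_i'$ and of $W_{i+1}$ is in hand.
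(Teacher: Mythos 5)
Your proof is essentially the paper's: both run a descending induction on $i$, using Theorems \ref{Theorem1} and \ref{Theorem2} to make $P_{i+1}$ a regular prime and Lemmas \ref{Lemma2} and \ref{Lemma3} to lift the level-$(i+1)$ sequence of transforms to level $i$. The only structural difference is in the order of operations within the inductive step: the paper first lifts the sequence $D_{i+1}\to E_1$, then reapplies Theorem \ref{Theorem2} and Lemmas \ref{Lemma2}--\ref{Lemma3} as cleanup stages ($F_1\to F_2\to F_3$), whereas you apply Theorems \ref{Theorem1}--\ref{Theorem2} to $D_i$ up front (correctly observing that type-$(k,i)$ transforms commute with localization at $P_{i+1}$) and then lift, relying on Lemma \ref{Lemma3}'s iteration to maintain the good-parameter shape throughout; the two routes land in the same place.
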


\begin{proof} We will prove the theorem by descending induction on $i$ with $1\le i\le t$. By Theorems \ref{Theorem1} and  \ref{Theorem2}, there exists a sequence of transforms $D_t\rightarrow E_1$ along $\nu_t$ of types (1,t), (2,t), (3,t) and (4,t) such that $P_{t+1}(E_1)$ is a regular prime in $E_1$. 

Suppose, by induction, that we have constructed a sequence of transforms $D_{i+1}\rightarrow E_1$ along $\nu_{i+1}$ of types (1,j), (2,j), (3,j) and (4,j) with $j\ge i+1$ such that $P_{j}(E_1)$ are regular primes in $E_1$ for $j\ge i+1$. By Theorems \ref{Theorem1} and  \ref{Theorem2} and Lemmas \ref{Lemma2} and \ref{Lemma3}, there exists a sequence of transform $D_i\rightarrow F_1$ along $\nu_i$ of types (1,j), (2,j), (3,j) and (4,j) with $j\ge i$ such that $(F_1)_{P_{i+1}(F_1)}=E_1$. By Theorem \ref{Theorem2}, there exists a sequence of transforms along $\nu_i$, $F_1\rightarrow F_2$, such that $(F_2)_{P_{i+1}(F_2)}=E_1$ and $P_{i+1}(F_2)$ is a regular prime in $F_2$. By Lemmas \ref{Lemma2} and \ref{Lemma3}, there exists a sequence of transforms along $\nu_i$, $F_2\rightarrow F_3$, such that $F_3$ has good regular parameters
$x_{i,1},\ldots,x_{t,r_t},z_1,\ldots,z_m$ such that $P_j(F_3)=(x_{j,1},\ldots,y_m)$ for all $j\ge i$ and 
$\nu_i(x_{j,1}),\ldots,\nu_i(x_{j,t_j})$ is a $\ZZ$-basis of $\Gamma_j/\Gamma_{j-1}$ for $i\le j\le t$.

The last statement of the theorem follows from (\ref{eq23}).
\end{proof}

\section{Local Uniformization of Abhyankar valuations}\label{SecLocUnif}

In this section we prove Theorems \ref{TheoremA}, \ref{TheoremB}, \ref{TheoremC} and \ref{TeoAbh}.
\vskip .2truein
\noindent{\bf Proof of Theorem \ref{TheoremA}:} This is immediate from Theorem \ref{Theorem3}, taking $R$ to be $$
D(1)/P_{t+1}(D(1)).
$$
\vskip .2truein
We remark that 
the regular parameters in $R$ of the conclusions of Theorem \ref{TheoremB} are good regular parameters (Definition \ref{Def1}).
\vskip .2truein
\noindent{\bf Proof of Theorem \ref{TheoremB}:} We first prove 1). In $\hat R=k_1[[x_{1,1},\ldots,x_{t,r_t}]]$, where $k_1\cong V_{\nu}/m_{\nu}$ is a coefficient field of $\hat R$, we have an expansion

\begin{equation}\label{eq26}
f=\sum \alpha_{b_{1,1},\ldots,b_{t,r_t}} x_{1,1}^{b_{1,1}}\cdots x_{t,r_t}^{b_{t,r_t}}
\end{equation}
with $\alpha_{b_{1,1},\ldots,b_{t,r_t}}\in k_1$. Let $J$ be the ideal
$$
J=(x_{1,1}^{b_{1,1}}\cdots x_{t,r_t}^{b_{t,r_t}}\mid \alpha_{b_{1,1},\ldots,b_{t,r_t}}\ne 0).
$$
By Proposition \ref{Perron}, there exists a sequence of primitive transforms $R\rightarrow R(1)$ along $\nu$ such that 
$$
JR(1)=x_{1,1}(1)^{a_{1,1}}\cdots x_{t,r_t}(1)^{a_{t,r_t}}R(1).
$$
Then $f$ has an expression 
$$
f=x_{1,1}(1)^{a_{1,1}}\cdots x_{t,r_t}(1)^{a_{t,r_t}}u
$$
with $u\in\widehat{R(1)}$ a unit. By \cite[Lemma 2]{Ab2}, $u\in K\cap \widehat{R(1)}=R(1)$, giving the desired expression of $f$ in $R(1)$.

To prove 2), take generators $f_1,\ldots,f_m$ of $I$. By part 1) of this theorem, there exists a sequence of primitive transforms $R\rightarrow R(1)$ along $\nu$ such that each $f_i$ is a monomial in $x_{1,1}(1),\ldots, x_{t,r_t}(1)$ times a unit in $R(1)$. By Proposition \ref{Perron}, we many now apply another sequence of primitive transforms $R(1)\rightarrow R(2)$ along $\nu$ to achieve the conclusions of 2).

The proof of 3) is a variation on the proof of 1), as in Lemma \ref{Lemma1}.
\vskip .2truein
\noindent{\bf Proof of Theorem \ref{TheoremC}:} There exist $f_1,\ldots,f_m\in V_{\nu}$ such that $S=k[f_1,\ldots,f_m]_{\nu}$. Let $R$ be the regular local ring of the conclusions of Theorem \ref{TheoremA}. By Theorem \ref{TheoremB}, there exists a sequence of primitive transforms $R\rightarrow R(1)$ such that $f_1,\ldots,f_m\in R(1)$ and $R(1)$ satisfies the conclusions of Theorem \ref{TheoremA}. Thus $R(1)$ dominates $S$ and so $R(1)$ satisfies the conclusions of Theorem \ref{TheoremC}.

\vskip .2truein
\noindent{\bf Proof of Theorem \ref{TeoAbh}:}  To prove Theorem \ref{TeoAbh}, we need only modify  the proof of \cite[Theorem 1.5]{CN} by observing that
the statement of  \cite[Theorem 7.2]{CN} is true without the  assumption that  $V_{\nu}/m_{\nu}$ is separable over $k$, using Theorems \ref{TheoremA} and \ref{TheoremB} of this paper in place of \cite[Theorem 1.1]{KK}.


\begin{thebibliography}{1000000000}
\bibitem{Ab1} S. Abhyankar, On the valuations centered in a local domain, American Journal of Mathematics 78 (1956), 321 - 348.
\bibitem{Ab2} S. Abhyankar, Local Uniformization on Algebraic Surfaces over ground fields of characteristic $p\ne 0$, Annals of Math. 63 (1956), 491 - 526.
\bibitem{Ab3} S. Abhyankar,  Resolution of Singularities of Embedded Algebraic Surfaces, Academic Press, 1966.
\bibitem{CJS} V. Cossart, U. Jannsen and S. Saito, Desingularization: invariants and strategy - application to dimension 2. with contributions by Bernd Schober. Lecture Notes in Mathematics, 2270, Springer, 2020.




\bibitem{CoPi} V. Cossart and O.  Piltant,   Resolution of singularities of arithmetical threefolds. J. Algebra 529 (2019), 268 - 535.



\bibitem{C1} S.D. Cutkosky, \textit{Local Monomialization of transcendental extensions}, Annales de L'institut Fourier \textbf{55} (2005), 1517 - 1586.
\bibitem{C4} S.D. Cutkosky, Resolution of Singularities for 3-folds in positive characteristic, Amer. J. Math. 131 (2009), 59 - 127.
\bibitem{Ramif} S. D. Cutkosky, Ramification of Valuations and Local Rings in Positive Characteristic,  Communications in Algebra Vol 44, (2016) Issue 7, 2828--2866.
\bibitem{C3} S.D. Cutkosky, Essential finite generation of valuation rings in characteristic zero algebraic 
function fields, to appear in Algebra and Number Theory, arXiv:1912.03546.
\bibitem{CG}   S.D. Cutkosky and Laura Ghezzi, \textit{Completions of Valuation Rings},
 Contemporary Math. \textbf{386} (2005), 13 -- 34.
\bibitem{CM} S.D. Cutkosky and H. Mourtada, Defect and Local Uniformization, Rev. R. Acad. Cienc. Exactas F\'is. Nat. Ser. A. Mat. RACSAM (2019), 4211-4226.
\bibitem{CN} S.D. Cutkosky and J. Novacoski, Essentially finite generation of valuation rings in terms of classical invariants,
Mathematische Nachrichten 294 (2021), 15 - 37.
\bibitem{CP} S.D. Cutkosky and O. Piltant, Ramification of Valuations, Advances in Mathematics Vol 183, (2004), 1-79.
\bibitem{E1} S. ElHitti,  Perron Transforms, Comm. Algebra 42 (2014), 2003 - 2045.

\bibitem{End} O. Endler, Valuation Theory, Springer Verlag, New York, Heidelberg, Berlin, 1972.
\bibitem{EP} A. Engler and A. Prestel, Valued fields, Springer Verlag, Berlin - Heidelberg - New York, 2005.
\bibitem{EGAIV.2}  A. Grothendieck and J. Dieudonn\'e, 
El\'ements de  G\'eom\'etrie Alg\'ebrique IV, part 2, Publ. Math IHES 24 (1965).
\bibitem{H} H. Hironaka,   Desingularization of excellent surfaces, Advanced Study Seminar in Algebraic Geometry, Bowdoin College, 1967, Notes by B.M. Bennet, in Appendix to
 Resolution of surface singularities, V. Cossart, J. Giraud,
and U. Orbanz, LNM 1101, Springer Verlag, Heidelberg, Berlin, New
York, 1984.

\bibitem{GAST} F.J. Herrera Govantes, F.J. Olalla Acosta, M. Spivakovsky, G. Teissier, Extending a valuation centered in a local domain to its formal completion, Proc. London Math. Soc. 105 (2012), 571 - 621.
\bibitem{KK} H. Knaf and  F.-V. Kuhlmann, Abhyankar places admit local uniformization in any characteristic, Ann. Sci. \'Ec. Norm. Sup\'er. (4) 38 no. 6 (2005), 833--846.
\bibitem{K} F.-V. Kuhlmann, Elimination of ramification I: The generalized stability theorem, Trans. AMS 362 (2010), 5697-5727.
\bibitem{K1} F.-V. Kuhlmann,   Valuation theoretic and model theoretic aspects of local uniformization, in Resolution of Singularities -
A Research Textbook in Tribute to Oscar Zariski, H. Hauser, J. Lipman, F. Oort, A. Quiros (es.), Progress in Math. 181, Birkh\"auser (2000), 4559 - 4600.

\bibitem{Hens} F.-V. Kuhlmann and J. Novacoski, Henselian elements, J. Algebra 418 (2014), 44--65.
\bibitem{Lip} J. Lipman, Desingularization for 2-dimensional schemes, Annals of Math. 107 (1978), 115-207.

\bibitem{NM} H. Matsumura, Commutative Ring Theory, Cambridge University Press, Cambridge (1986).

\bibitem{Sat} J.-C. San Saturnino, Defect of an Extension, Key Polynomials and Local Uniformization, J. Algebra 481, 91 - 119 (2017). 


\bibitem{T} B. Teissier,  Overweight deformations of affine toric varieties and local uniformization,  in
Valuation theory in interaction, Proceedings of the second international conference on valuation theory, Segovia, 
El  Escorial,  2011.  Edited  by  A.  Campillo,  F-V.  Kuhlmann  and  B.  Teissier.  European  Math.  Soc.
Publishing House, Congress Reports Series, Sept. 2014, 474-565.
\bibitem{Z1} O. Zariski, Local Uniformization on Algebraic Varieties, Annals of Math. 41 (1940), 852-896.
\bibitem{ZS2} O. Zariski and P. Samuel, Commutative Algebra Volume II, Van Nostrand, 1960.

\end{thebibliography}
 \end{document}